\documentclass{article}
\usepackage{bbm}

\RequirePackage{amsmath,amssymb,amsthm,mathtools}
\usepackage[T1]{fontenc}
\usepackage{newtxtext}
\usepackage{newtxmath}

\RequirePackage{geometry}
\RequirePackage{microtype}
\RequirePackage{graphicx}
\RequirePackage{xcolor}
\RequirePackage{titlesec}
\RequirePackage{fancyhdr}
\RequirePackage{enumitem}
\RequirePackage{framed}
\RequirePackage{hyperref}
\RequirePackage{cleveref}

\definecolor{Accent}{HTML}{0D9488}   
\definecolor{Muted}{HTML}{6B7280}   
\definecolor{Rule}{HTML}{E5E7EB}

\hypersetup{
  colorlinks=true,
  linkcolor=Accent,
  citecolor=Accent,
  urlcolor=Accent,
  pdfborder={0 0 0}
}

\usepackage{caption}
\usepackage{bbm}
\usepackage{tikz}
\usetikzlibrary{arrows.meta, calc}
\usepackage{geometry}
\numberwithin{equation}{section}

\theoremstyle{plain}
\newtheorem{theorem}{Theorem}[section]
\newtheorem{lemma}[theorem]{Lemma}
\newtheorem{proposition}[theorem]{Proposition}
\newtheorem{corollary}[theorem]{Corollary}

\theoremstyle{definition}
\newtheorem{definition}[theorem]{Definition}
\newtheorem{example}[theorem]{Example}

\theoremstyle{remark}
\newtheorem*{remark}{Remark}

\setlist{topsep=0.35em,itemsep=0.2em,parsep=0pt}

\newcommand{\RR}{\mathbf{R}}
\newcommand{\CC}{\mathbf{C}}
\newcommand{\NatNum}{\mathbf{N}}

\newcommand{\di}[1]{\mathop{\mathrm{d}#1}}

\newcommand{\conj}[1]{\overline{#1}}
\newcommand{\cl}[1]{\mathrm{cl}({#1})}

\newcommand{\mom}{\mathfrak{m}}

\newcommand{\MM}{\mathcal{M}}
\newcommand{\NN}{\mathcal{N}}
\newcommand{\PP}{\mathcal{P}}

\newcommand{\ee}{\varepsilon}

\newcommand{\spec}{\mathrm{spec}}

\newcommand{\stard}{\stackrel{\mathrm{*d}}{\sim}}

\newcommand{\supp}{\mathrm{supp}}

\DeclarePairedDelimiterX{\inner}[2]{\langle}{\rangle}{#1,#2}

\newcommand{\BBP}{\mathbb{B}}
\newcommand{\iu}{\mathrm{i}}  

\newcommand{\Rsf}{\mathsf{R}}
\newcommand{\Gsf}{\mathsf{G}}

\usepackage{enumitem}
\setenumerate[1]{label={\textnormal{(\arabic*)}}, itemsep=0.2em,topsep=0.2em}
\setlist{nolistsep}

\usepackage[
    backend=biber,
    sorting=nyt,
    style=alphabetic,
    url=false,
    isbn=false,
    giveninits=true,
    date=year,
    backref=true,
      maxnames=5
]{biblatex}
\renewbibmacro{in:}{
  \ifboolexpr{
     test {\ifentrytype{article}}
     or
     test {\ifentrytype{inproceedings}}
  }{}{\printtext{\bibstring{in}\intitlepunct}}
}

\usepackage{etoolbox}

\makeatletter
\patchcmd{\@maketitle}
  {\LARGE \@title \par}
  {\fontsize{16}{19}\selectfont \@title \par}
  {}{}
\makeatother

\title{\textbf{Freely infinitely divisible $R$-diagonal elements and Brown measure}}
\author{
  Yu Kitagawa\thanks{Department of Mathematics, Faculty of Science, Kyoto University, Japan. Email: \texttt{kitagawa.yu.57z@st.kyoto-u.ac.jp}}
  \and
  Mihai Popa\thanks{Department of Mathematics, The University of Texas at San Antonio, San Antonio, TX 78249, USA. Email: \texttt{mihai.popa@utsa.edu}}
  \and
  Ping Zhong\thanks{Department of Mathematics, University of Houston, Houston, TX 77204-3008, USA. Email: \texttt{pzhong@central.uh.edu}}
}

\date{\today}

\begin{document}

\maketitle

\begin{abstract}
We study freely infinitely divisible $R$-diagonal elements in the unbounded setting and Brown measures for free additive perturbations by such elements. This class includes circular elements, circular Cauchy elements, and other previously studied $R$-diagonal models. We construct examples and prove stability under several algebraic operations, including homogeneous noncommutative polynomials in bounded, freely independent elements from this class. Using results for general $R$-diagonal perturbations, together with several analytic estimates specific to freely infinitely divisible $R$-diagonal elements, we prove that, in the bounded case, the support of the Brown measure coincides with the spectrum, and we obtain a criterion for property (H) in this non-normal setting. Finally, we study the free convolution semigroup associated with the symmetrized law of the modulus and derive a Hamilton--Jacobi equation for the regularized logarithmic potential.
\end{abstract}

\tableofcontents

\section{Introduction}

The Brown measure, introduced in \cite{B1986lidskii}, is the natural analogue of the spectral distribution for non-normal operators in a tracial von Neumann algebra. Unlike in the self-adjoint case, however, Brown measures are usually difficult to compute and even harder to analyze. $R$-diagonal elements form one of the few classes of non-normal operators for which the Brown measure admits an explicit description. They were introduced in \cite{NS1997rdiag} to provide a unified framework for treating two fundamental examples of non-normal operators: circular elements and Haar unitary elements. In the bounded setting, it was shown in \cite{HL2000Brown} that the Brown measure of an $R$-diagonal element is rotationally invariant and completely determined by the law of its modulus; this description was later extended to the unbounded setting in \cite{HS2007Brown}. See also \cite{NicaSpeicher2006book,Z2023brown} for background and related reformulations. This explicit description makes $R$-diagonal elements a natural testing ground for regularity problems for Brown measures in non-normal free probability.

Several developments suggest that the behavior of Brown measures under addition becomes more tractable when one summand has additional structure. The circular element is the basic example of such a structured perturbation. The Brown measures of free circular Brownian motion have been studied extensively, and a variety of regularity results have been established in \cite{BL2001computation,BCC2014spectrum,BC2016outlier,HZ2023Brown,BYZ2024Brown,EJ2025density,Z2026brown}. Beyond the circular case, the circular Cauchy operator appears in \cite{HS2007Brown} as an unbounded $R$-diagonal example, and this operator also plays an important role in the argument for the existence of invariant subspaces in \cite{HS2009inv}. More generally, Brown measures of sums with one $R$-diagonal summand were analyzed in \cite{BZ2022}. Such perturbations are also fundamental in random matrix theory, as shown by the single-ring and deformed single-ring theorems \cite{GKZ2011single,HZ2025deformed}. These results suggest that $R$-diagonal perturbations exhibit a strong regularizing effect.

In the self-adjoint setting, the regularizing effects of free convolution have been studied extensively, beginning with free convolution with the semicircular distribution \cite{B1997free} and later extending to the framework of freely infinitely divisible distributions \cite{HW2022Reg,BWZ2023superconvergence}. This line of work has produced powerful analytic tools based on the Voiculescu transform and subordination functions. These methods make it possible to describe the support, atoms, density, and related regularity properties of freely infinitely divisible laws.

The aim of this paper is to develop a non-normal counterpart of the regularity theory for freely infinitely divisible laws in the $R$-diagonal setting. The class of freely infinitely divisible $R$-diagonal elements was introduced in \cite{BNNS2018eta} in a not necessarily tracial setting, using a cumulant-based argument. A related viewpoint has also been considered in the framework of rectangular free convolution and its infinitely divisible distributions \cite{Bg2007infinitely,Bg2009rectangular}. This class contains several canonical examples, including circular elements, circular Cauchy elements, and other $R$-diagonal models appearing in the random matrix and random polynomial literature \cite{KT2018limiting,COR2024fractional}.

A fundamental fact about $R$-diagonal elements is that their $*$-distribution is governed by the distribution of the modulus, and the symmetrized law of the modulus is compatible with free additive convolution. This allows results on symmetric freely infinitely divisible distributions to be transferred to the $R$-diagonal setting. In particular, we obtain the following characterization, as well as a stability result for homogeneous noncommutative polynomials in the bounded setting; see Theorems~\ref{thm:rid_char} and~\ref{thm:prod} for the precise statements.

\begin{theorem}
Let $X$ be an $R$-diagonal element. Then $X$ is freely infinitely divisible if and only if the symmetrized law $\widetilde{\mu}_{|X|}$ is freely infinitely divisible. In the bounded case, any homogeneous noncommutative polynomial in freely independent elements from this class is again $R$-diagonal and freely infinitely divisible.
\end{theorem}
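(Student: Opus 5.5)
The plan is to work entirely at the level of $*$-distributions and free cumulants, reducing both claims to statements about symmetric probability measures on $\RR$. Recall that an $R$-diagonal $X$ is determined by its determining sequence $(\alpha_n)$. The only non-vanishing free cumulants of $(X,X^*)$ are the alternating ones $\kappa_{2n}(X,X^*,\dots)=\kappa_{2n}(X^*,X,\dots)=\alpha_n$. Moreover, the free cumulants of the symmetrized law satisfy $\kappa_{2n}(\widetilde{\mu}_{|X|})=\alpha_n$. Equivalently, the $2\times 2$ self-adjoint matrix $\begin{pmatrix}0&X\\X^*&0\end{pmatrix}$, over $M_2$ with the appropriate conditional expectation, has scalar-valued distribution $\widetilde{\mu}_{|X|}$.

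\textbf{Characterization.} We take ``$X$ freely infinitely divisible'' to mean: for each $n$ there exist free, identically $*$-distributed $X_1,\dots,X_n$ with $X\stackrel{*d}{=}X_1+\dots+X_n$.

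For ($\Rightarrow$), we first show that the summands may be chosen $R$-diagonal. Freeness gives additivity of all $*$-cumulants, so the cumulants of each $X_j$ are $1/n$ times those of $X$, and hence are alternating. By the cumulant characterization of $R$-diagonality, each $X_j$ is $R$-diagonal. It then follows that $\widetilde{\mu}_{|X|}=\widetilde{\mu}_{|X_1|}^{\boxplus n}$, using the known identity $\widetilde{\mu}_{|X+Y|}=\widetilde{\mu}_{|X|}\boxplus\widetilde{\mu}_{|Y|}$ for free $R$-diagonal $X,Y$ (Haagerup--Larsen; in the unbounded case, via Voiculescu transforms and the affiliated version).

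For ($\Leftarrow$), write $\widetilde{\mu}_{|X|}=\nu_n^{\boxplus n}$. The root $\nu_n$ is symmetric, by uniqueness of $\boxplus$-roots of $\boxplus$-infinitely divisible laws and the symmetry of the Lévy--Khintchine data. We then realize $\nu_n=\widetilde{\mu}_{|Y_n|}$ with $Y_n=u h_n$, where $u$ is Haar unitary $*$-free from $h_n\ge0$ and $h_n$ has law equal to the pushforward of $\nu_n$ under $t\mapsto|t|$. Taking free copies of $Y_n$, the same identity gives that their sum is $R$-diagonal with symmetrized modulus law $\widetilde{\mu}_{|X|}$. Since the $*$-distribution of an $R$-diagonal element is determined by $\widetilde{\mu}_{|X|}$, this shows $X$ is freely infinitely divisible.

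The main obstacle is the unbounded case, where cumulants are unavailable. Here I would argue through Voiculescu transforms, replacing cumulant sequences by the identity $\phi_{\widetilde{\mu}_{|X+Y|}}=\phi_{\widetilde{\mu}_{|X|}}+\phi_{\widetilde{\mu}_{|Y|}}$, proved by truncation and approximation in the measure topology. The ($\Rightarrow$) direction additionally needs $R$-diagonality of the summands. I would get this from $X\stackrel{*d}{=}uX$ together with the Lévy--Khintchine uniqueness, producing the roots directly as $uh_n$ as above so that the two notions of divisibility match.

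\textbf{Polynomials (bounded case).} Let $P$ be homogeneous of degree $d$ in free, freely infinitely divisible $R$-diagonal $X_1,\dots,X_k$. Each monomial $X_{i_1}^{\epsilon_1}\cdots X_{i_d}^{\epsilon_d}$ can be rotated: replacing $X_i$ by $\lambda X_i$ with $|\lambda|=1$ preserves the joint $*$-distribution. Known results (Nica--Speicher) then give that $P(X_1,\dots,X_k)$ is $R$-diagonal whenever the $X_i$ are free $R$-diagonal and $P$ is homogeneous; I would reprove this using $X_i\stackrel{*d}{=}u_iX_i$ with free Haar unitaries $u_i$.

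For infinite divisibility, I use the $\boxplus$-semigroup. By the characterization, each $\widetilde{\mu}_{|X_i|}$ embeds in a $\boxplus$-semigroup $\{\mu_i^{t}\}$, and there are $R$-diagonal $X_i^{(t)}$ with $\widetilde{\mu}_{|X_i^{(t)}|}=\mu_i^{t}$. Since the polynomial is homogeneous, $P$ evaluated at $\sum_{j=1}^N X_i^{(1/N),j}$ expands as a sum of $N^d$ terms. It is cleaner, however, to verify directly that the free cumulants of $P(X)$, computed by the formula for products as arguments, are polynomials in the $\alpha^{(i)}_m$ that are homogeneous under the scaling of all $\alpha^{(i)}$ by $t$. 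Then $t\mapsto$ the cumulant sequence of $P(X^{(t)})$ would be a candidate semigroup whose positive definiteness must be checked.

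This last point is the hardest step. The cumulants of $P$ scale non-linearly in $t$, so linear scaling fails. I would instead use the criterion that a symmetric compactly supported law is $\boxplus$-infinitely divisible iff its free cumulant sequence $(\kappa_{2n})_{n\ge1}$ is conditionally positive definite in the Hankel sense. I would then show that conditional positive definiteness is preserved under the product-of-cumulants formula, using the fact that the moments of $|P(X)|^2$ come from free multiplicative convolution of free Poisson-type limits. As a fallback for products (degree-$d$ monomials), I would reduce to the known fact that the free multiplicative convolution of $\boxplus$-infinitely divisible symmetric laws' squares stays in the relevant class, and extend to sums by $\boxplus$-additivity.
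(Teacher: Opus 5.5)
Your argument for the characterization is essentially the paper's. For ($\Leftarrow$) you take the unique, hence symmetric, root $\nu_n$ and realize it as $\widetilde{\mu}_{|uh_n|}$. You then use $\widetilde{\mu}_{|X_1+X_2|}=\widetilde{\mu}_{|X_1|}\boxplus\widetilde{\mu}_{|X_2|}$ and the fact that $\widetilde{\mu}_{|X|}$ determines the $*$-distribution of an $R$-diagonal element. One difference in framing: the paper's definition requires the summands to be $R$-diagonal, so ($\Rightarrow$) follows immediately from that identity. By allowing arbitrary $*$-free summands you create an extra step. Your cumulant argument settles it in the bounded case, but your unbounded sketch does not. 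Producing roots of the form $uh_n$ already presupposes that $\widetilde{\mu}_{|X|}$ is $\boxplus$-infinitely divisible, which is what ($\Rightarrow$) must prove.

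The genuine gap is the free infinite divisibility of $T=\sum_\iota\alpha(\iota)X^\iota$. You name the right target: the determining sequence of $T$ must be, after a shift, a moment sequence of a positive measure. But ``show that conditional positive definiteness is preserved under the product-of-cumulants formula'' is the whole content of the claim, and you give no mechanism for it. Your fallback also fails. Distinct monomials in the same letters are not free, so $\boxplus$-additivity is unavailable, and for a sum of monomials there is no $\boxtimes$-factorization of $\mu_{|T|^2}$. Already $X_1X_2+X_2X_1$ needs the special substitution $(x,y)\mapsto(ux,yu^*)$, which has no evident analogue for general $\alpha$.

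The missing idea is an explicit positive operator model. The paper first reduces to the compound free Poisson case $\kappa_r[h_k]=\int t^r\,d\sigma_k(t)$. On the full Fock space over $\bigoplus_k L^2(\sigma_k)\otimes\CC^2$ it represents each $X_k$ by multiplication, creation and annihilation operators. Sums of $\kappa_\pi$ over irreducible noncrossing $\pi$ then become matrix coefficients $\langle\Psi(\iota,\varepsilon)\eta,\eta\rangle$. The constraint from the products-as-arguments formula, that $\pi$ joins $tm$ and $tm+1$, corresponds exactly to the endpoint restrictions on these operators. This gives $\kappa_{2n}(T,T^*,\dots,T,T^*)=\langle (VV^*)^n\eta,\eta\rangle$ with $V=\sum_\iota\alpha(\iota)\Psi(\iota,1)$ bounded. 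Positivity is then automatic, and approximation removes the compound Poisson assumption.

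Your $R$-diagonality step is also unsupported unless you can cite a precise result for general homogeneous polynomials; the standard results cover products and sums of $*$-free elements. Scalar rotations $X_i\mapsto\lambda X_i$ give only rotation invariance of the $*$-distribution, which is strictly weaker than $R$-diagonality. Substituting $X_i\mapsto u_iX_i$ does not turn $T$ into $uT$. The paper argues directly. If $\pi\vee\widehat{0}_n=\mathbbm{1}_{mn}$ and $\kappa_\pi\neq0$, counting signs between $tm$ and its cyclic successor in its block forces $tm$ and $tm+1$ into the same block. $R$-diagonality of the $X_k$ then gives $\varepsilon(t)\neq\varepsilon(t+1)$. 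Finally, the monomials must be words in the $X_i$ alone: if adjoints are allowed, $XX^*$ is a counterexample.
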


We next turn to Brown measures for freely infinitely divisible $R$-diagonal elements and for additive perturbations by them. Building on the analytic framework of \cite{BZ2022}, we prove results on the support of the Brown measure, including the coincidence of the support with the spectrum in the bounded case, as well as a criterion for property (H), a regularity property introduced and studied in \cite{BBgG2009regularization,HW2022Reg}. See Proposition~\ref{prop:l-2_infty}, Corollary~\ref{cor:supp_spec}, and Theorem~\ref{thm:property_H} for the precise statements.

\begin{theorem}
Let $Y\in\log^+(\MM)$ be a freely infinitely divisible $R$-diagonal element. Then $\mom_{-2}(Y)=\infty$. In particular, if $Y$ is bounded, then $\supp(\mu_Y)=\spec(Y)$. Moreover, $Y$ satisfies property (H), that is, for every $X_0\in\log^+(\MM)$ that is $*$-free from $Y$, the Brown measure of $X_0+Y$ is absolutely continuous and has strictly positive density on $\CC$, if and only if $\tau(\ker Y)=0$ and $\mom_2(Y)=\infty$.
\end{theorem}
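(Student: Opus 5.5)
The argument has three parts: the negative moment, the support of the Brown measure, and property (H). Everything passes through the symmetrized law $\nu:=\widetilde{\mu}_{|Y|}$, which is freely infinitely divisible by the characterization theorem stated just before. Write its Voiculescu transform in L\'evy--Khintchine form,
\[
\varphi_\nu(z)=\gamma+\int_{\RR}\frac{1+zt}{z-t}\,\di{\sigma(t)},
\]
where symmetry forces $\gamma=0$ and $\sigma$ to be symmetric. Here $\mom_p(Y)=\int|t|^p\di{\nu(t)}$. Recall also the Haagerup--Larsen / Haagerup--Schultz description of the Brown measure of an $R$-diagonal element. The radial distribution function is $F(r)=\mu_Y(\overline{B(0,r)})$, and the Brown measure is supported on the annulus with radii
\[
\mom_{-2}(Y)^{-1/2}\quad\text{and}\quad\mom_2(Y)^{1/2}.
\]
It carries an atom at $0$ of mass $\tau(\ker Y)$. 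On the interior of the annulus $F$ is strictly increasing, so the support is the full closed annulus, or the full disk when the inner radius is $0$.

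\textbf{Step 1: $\mom_{-2}(Y)=\infty$.} If $\nu(\{0\})>0$, this is immediate. Otherwise I show that the density of $\nu$ near $0$ decays too slowly for $\int t^{-2}\di{\nu}$ to converge. The key fact is that a nondegenerate symmetric freely infinitely divisible law has $0$ in its support with non-vanishing behaviour there. Concretely, I use the Cauchy transform: $\mom_{-2}<\infty$ would force $G_\nu(iy)\approx -iy\,\mom_{-2}$ as $y\downarrow0$.

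Now use the subordination and inverse-function relation. With $F_\nu=1/G_\nu$ we have $F_\nu^{-1}(w)=w+\varphi_\nu(w)$. Purely imaginary $w=iv$ is mapped to the imaginary axis by symmetry, and
\[
\varphi_\nu(iv)=-i\int\frac{v(1+t^2)}{v^2+t^2}\,\di{\sigma(t)},
\]
whose imaginary part has size at least a constant as $v\to\infty$, unless $\sigma=0$. Setting $F_\nu(iy)=iv$ gives
\[
y=v-\int\frac{v(1+t^2)}{v^2+t^2}\,\di{\sigma}.
\]
If $\mom_{-2}<\infty$ then $v\sim 1/(y\,\mom_{-2})\to\infty$. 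But the right-hand side is then $v-\sigma(\RR)+o(1)$ or $v-\Theta(1)$, and so cannot tend to $0$ like $1/v$. This is a contradiction.

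This asymptotic comparison is the main obstacle. It needs care when $\sigma$ has heavy tails (so the integral may grow) and in the degenerate case $\sigma=0$, where $Y=0$ and the statement is trivial. I would first handle it by monotone convergence in $v$, splitting $\sigma$ into $|t|\le1$ and $|t|>1$.

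\textbf{Step 2: support equals spectrum in the bounded case.} By Step 1 the inner radius is $0$, so $\supp(\mu_Y)$ is the closed disk of radius $\|Y\|_2=\mom_2^{1/2}$. By Haagerup--Larsen, $\spec(Y)$ is that same disk whenever $Y$ is not invertible. If $Y$ were invertible, $|Y|^{-1}$ would be bounded and $\mom_{-2}<\infty$, contradicting Step 1. Hence the two sets coincide.

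\textbf{Step 3: property (H).} For necessity, take $X_0=0$. If $\tau(\ker Y)>0$, then $\mu_Y$ has an atom. If $\mom_2<\infty$, then $\mu_Y$ is supported in a bounded disk. In either case positivity of the density on $\CC$ fails.

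For sufficiency, I invoke the results of \cite{BZ2022} for $R$-diagonal perturbations. The density of $\mu_{X_0+Y}$ is expressed through subordination functions for the hermitization, and it is positive at $\lambda$ exactly when a certain fixed-point equation in terms of $\widetilde{\mu}_{|Y|}$ and $\widetilde{\mu}_{|X_0-\lambda|}$ has a solution with positive imaginary part. Using the L\'evy--Khintchine form, the condition $\mom_2=\infty$ makes $\int t^2\di{\nu}$ infinite. Equivalently $\sigma$ has infinite second moment, so $\varphi_\nu(iv)/v$ does not vanish as $v\to\infty$ in the appropriate sense. This guarantees the fixed-point equation is solvable for every $\lambda$. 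The condition $\tau(\ker Y)=0$ together with Step 1 excludes atoms and yields absolute continuity.
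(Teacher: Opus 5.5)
Your Step 2 and the necessity half of Step 3 match the paper. Step 1 is valid after a small repair (see below). The sufficiency half of property (H), however, is not proved.

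\textbf{Sufficiency in property (H).} You assert that $\mom_2(Y)=\infty$ makes a fixed-point equation solvable for every $\lambda$, because $\varphi_\nu(\iu v)/v$ does not vanish as $v\to\infty$. That premise is false for every freely infinitely divisible $\nu$: by dominated convergence, $\varphi_\nu(\iu v)/(\iu v)=-\int\frac{1+t^2}{v^2+t^2}\di{\sigma}(t)\to0$. What $\mom_2(Y)=\infty$ actually gives is $v\,\Im\varphi_\nu(\iu v)\to-\int(1+t^2)\di{\sigma}(t)=-\infty$.

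More importantly, you never show where each hypothesis enters. What is needed is the explicit criterion of Theorem~\ref{thm:positivity}. It says $\mu_{X_0+Y}$ is absolutely continuous with strictly positive density on $\Omega=\CC\setminus(S\cup F_1\cup F_2)$, where $S=\{\lambda:\tau(\ker(X_0-\lambda))+\tau(\ker Y)\ge1\}$, $F_1=\{\lambda:\mom_2(Y)\le\mom_{-2}(X_0-\lambda)^{-1}\}$, and $F_2=\{\lambda:\mom_2(X_0-\lambda)\le\mom_{-2}(Y)^{-1}\}$. For non-scalar $X_0$:
\begin{itemize}
\item $\tau(\ker Y)=0$ empties $S$;
\item $\mom_2(Y)=\infty$ empties $F_1$;
\item Step 1 is exactly what empties $F_2$.
\end{itemize}
So the role of Step 1 is to remove the possible hole where $X_0-\lambda$ is small in $L^2$, not to exclude atoms as you suggest. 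The scalar case $X_0\in\CC$ has to be handled separately, via the radial formula for $\mu_Y$. Without such a criterion, or an equivalent analysis of the subordination equations, ``solvable for every $\lambda$'' is only an assertion.

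\textbf{Step 1.} Your route here is genuinely different from the paper's. The paper writes $\mu_{|Y|^2}=\nu\boxtimes\Pi_1$ and combines $S_{\mu_{|Y|^2}}(u)=S_\nu(u)/(1+u)$ with $\lim_{u\to-1}S_\mu(u)=\int x^{-1}\di{\mu}(x)$. You use only the L\'evy--Khintchine form of $\widetilde{\mu}_{|Y|}$ and the relation $\iu y=F_\nu(\iu y)+\varphi_\nu(F_\nu(\iu y))$. That is, $y=v\theta(v)$ with $\theta(v)=1-\int\frac{1+t^2}{v^2+t^2}\di{\sigma}(t)$, the same relation as in Proposition~\ref{prop:radial-voiculescu}.

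You should state that this identity holds on all of $\CC^+$ by analytic continuation. You apply it as $y\to0$, far from the cone where $F_\nu^{\langle-1\rangle}$ is a priori defined.

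Your asymptotic ``$v-\sigma(\RR)+o(1)$'' is wrong. The subtracted term equals $1/v$ for the semicircle and can grow for heavy-tailed $\sigma$. But you only need $\theta(v)\to1$, which follows from dominated convergence. Then $y\ge v/2\to\infty$, contradicting $y\to0$.

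Your route avoids the Bercovici--Pata/Marchenko--Pastur factorization altogether. The paper's route is shorter, given the characterization it already has in hand.
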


Finally, we study the free convolution semigroup associated with the symmetrized law of the modulus. This semigroup provides a dynamical framework for studying regularity properties of Brown measures. It includes the free circular Brownian motion setting \cite{HZ2023Brown,BYZ2024Brown} and the circular Cauchy model \cite{HS2007Brown,HS2009inv}. In this setting, for perturbations $X_t=X_0+Y_t$ satisfying $\widetilde{\mu}_{|Y_t|}=\mu^{\boxplus t}$, where $\widetilde{\mu}_{|Y_t|}$ denotes the symmetrization of the law of $|Y_t|$, we derive a Hamilton--Jacobi equation for the regularized logarithmic potential. See Theorem~\ref{thm:determinantal-regularity} and Proposition~\ref{prop:hj_pde} for the precise statements.

\begin{theorem}
Let $S(t,\lambda,\ee)=\tau(\log(|X_t-\lambda|^2+\ee^2))$. Then $S$ satisfies a Hamilton--Jacobi equation whose Hamiltonian is determined by the $R$-transform of $\mu$.
\end{theorem}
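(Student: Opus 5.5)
We derive the PDE by differentiating $S(t,\lambda,\ee)=\tau(\log(|X_t-\lambda|^2+\ee^2))$ in $t$ and expressing $\partial_t S$ through the spatial derivatives $\partial_\lambda S$, $\partial_{\bar\lambda} S$, $\partial_\ee S$. The first move is to hermitize. For the $2\times 2$ self-adjoint matrix
\[
H_t(\lambda)=\begin{pmatrix}0 & X_t-\lambda\\ (X_t-\lambda)^* & 0\end{pmatrix},
\]
$S(t,\lambda,\ee)$ equals $2\,\mathrm{tr}\otimes\tau\,\log|H_t(\lambda)-\iu\ee|$, up to normalization. It is therefore controlled by the operator-valued Cauchy transform of $H_t(\lambda)$ at the point $\iu\ee$. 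We compute
\[
\partial_\ee S=2\ee\,\tau\big((|X_t-\lambda|^2+\ee^2)^{-1}\big),\qquad
\partial_\lambda S=-\tau\big((X_t-\lambda)^*(|X_t-\lambda|^2+\ee^2)^{-1}\big).
\]
These are exactly the diagonal and off-diagonal entries of that $M_2(\CC)$-valued Cauchy transform.

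Next we use the structure of $Y_t$. Since $Y_t$ is $R$-diagonal and $*$-free from $X_0$, its hermitization $\begin{pmatrix}0&Y_t\\Y_t^*&0\end{pmatrix}$ has an $M_2(\CC)$-valued $R$-transform of the form $\mathcal R_t(b)=\mathrm{diag}\big(\varphi_t(b_{22}),\varphi_t(b_{11})\big)$. Here $\varphi_t$ is given by the scalar $R$-transform of the symmetrization $\widetilde\mu_{|Y_t|}=\mu^{\boxplus t}$; this is the standard description, e.g.\ as in \cite{BZ2022}. Because $\widetilde\mu_{|Y_t|}=\mu^{\boxplus t}$, we have $\mathcal R_t=t\,\mathcal R_1$, which is the only place the semigroup enters. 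Operator-valued subordination then gives
\[
G_{H_t(\lambda)}(\Lambda)=G_{H_0(\lambda)}\big(\Lambda-t\,\mathcal R_1(G_{H_t(\lambda)}(\Lambda))\big),\qquad \Lambda=\iu\ee I_2 .
\]
Rotational invariance and the $R$-diagonal symmetry force the relevant matrices to have equal imaginary diagonal entries, with the off-diagonal entries carrying $\lambda$. The subordinated argument is thus again of the form $\iu\ee' I_2$ plus a shift, with $\ee'=\ee+\iu\,t\,\varphi(\cdot)$ evaluated at $p=\partial_\ee S/(2\ee)$-type quantities. The result is a characteristic relation: $S(t,\lambda,\ee)=S(0,\lambda_0,\ee_0)+(\text{explicit correction})$, where $(\lambda_0,\ee_0)$ moves linearly in $t$ with velocity determined by $\mathcal R_1$ evaluated at the momenta.

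Differentiating this relation in $t$ then yields
\[
\partial_t S=\mathcal H\big(\partial_\ee S,\ \ee,\ |\partial_\lambda S|\big)
\]
with $\mathcal H$ built from $\varphi$, the scalar $R$-transform of $\mu$ composed appropriately. For $\mu$ semicircular, $\varphi$ is linear and one should recover the known equation $\partial_t S=\ee\,\partial_\ee S\cdot(\ldots)$ from \cite{HZ2023Brown}; we will use this as a consistency check. Alternatively, the whole computation can be done directly via the free Lévy--Khintchine representation of $\mu^{\boxplus t}$: $\partial_t$ of the $R$-transform is linear in $t$, so $\partial_t G$ is obtained by the chain rule through subordination.

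The main obstacle is the analytic justification when $Y_t$ (and $X_0$) are merely in $\log^+(\MM)$. In that case we must ensure that the hermitized subordination holds for affiliated operators, that $S$ is differentiable in $t$, and that $\mathcal R_1$ is evaluated inside its domain of analyticity; the last point requires $\ee$ large or the use of analytic continuation along the imaginary axis. We would first prove the identity for bounded $X_0$ and $\ee$ large, where all transforms are analytic near infinity. We would then extend by analytic continuation in $\ee$ and by approximation of $X_0$, using continuity of $S$ under convergence in $\log^+$ distribution.
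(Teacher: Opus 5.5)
Your proposal has a genuine gap at exactly the step where the theorem has content. The hermitization and the $M_2(\CC)$-valued subordination are fine. On the slice where they are evaluated, both diagonal entries of the Cauchy transform equal $-\iu\Gsf_{t,\lambda}(\ee)$, and there they reduce to the paper's scalar relation $W_{1,t}^{(\lambda)}(\ee)=\ee+t\Rsf_\mu(\Gsf_{t,\lambda}(\ee))$. But subordination controls only resolvents, i.e.\ the first derivatives $\partial_\ee S=2\Gsf_{t,\lambda}(\ee)$ and $\partial_\lambda S$, not $S$ itself, so it cannot by itself give $\partial_tS$. You assert the ``characteristic relation'' $S(t,\lambda,\ee)=S(0,\lambda_0,\ee_0)+(\text{explicit correction})$ but never identify the correction, so the Hamiltonian is never determined.

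In the paper this step is supplied by the log-determinant formula of Proposition~\ref{prop:log_symm_subord}. It gives $S(t,\lambda,\ee)=\tau(\log(|X_0-\lambda|^2+W_{1,t}^{(\lambda)}(\ee)^2))+C_{t,\lambda}(\ee)$, where $C_{t,\lambda}$ depends only on $Y_t$ and the subordination data. One then proves $\partial_\ee\bigl(C_{t,\lambda}(\ee)-tI_\mu(\Gsf_{t,\lambda}(\ee))\bigr)=0$ with $I_\mu(y)=2\int_0^y\Rsf_\mu(r)\di{r}-2y\Rsf_\mu(y)$. The constant is killed as $\ee\to\infty$ using $y\Rsf_\mu(y)\to0$ and $\Gsf_{t,\lambda}(\ee)W_{2,t}^{(\lambda)}(\ee)\to1$. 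Differentiating in $t$ and inserting the $t$-derivative of the fixed-point relation, the $\partial_t\Gsf_{t,\lambda}$ terms cancel, leaving $\partial_tS=2\int_0^{\Gsf_{t,\lambda}(\ee)}\Rsf_\mu(r)\di{r}=2\mathcal H_\mu(\partial_\ee S/2)$.

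Your ``alternative'' chain-rule route would at best give the Burgers-type identity $\partial_t\Gsf_{t,\lambda}=\Rsf_\mu(\Gsf_{t,\lambda})\,\partial_\ee\Gsf_{t,\lambda}$, i.e.\ $\partial_\ee\bigl(\partial_tS-2\mathcal H_\mu(\partial_\ee S/2)\bigr)=0$. You would still have to show that the $\ee$-independent remainder vanishes. That needs control of $\partial_tS$ as $\ee\to\infty$ and integrability of $\Rsf_\mu$ at $0$, which is the log-integrability of Theorem~\ref{thm:log_integrability_semigroup}. Neither appears in the proposal.

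The structure you predict is also off, and the error is visible inside your own set-up. The $D_2$-valued $R$-transform of the hermitized $R$-diagonal element is diagonal. Correctly, it is $\mathrm{diag}\bigl(b_{22}f(b_{11}b_{22}),\,b_{11}f(b_{11}b_{22})\bigr)$ with $f(z)=\sum_{n\ge1}\alpha_nz^{n-1}$ and $(\alpha_n)$ the determining sequence; this agrees with your $\mathrm{diag}(\varphi_t(b_{22}),\varphi_t(b_{11}))$ only on $b_{11}=b_{22}$. Either way, subordination shifts only the diagonal entries $\iu\ee$ of $\Lambda$ and leaves the off-diagonal $\lambda$ untouched. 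Your linear-characteristics picture is right for $\ee_0=W_{1,t}^{(\lambda)}(\ee)$, which moves with velocity $\Rsf_\mu(\Gsf_{t,\lambda}(\ee))$. But $\lambda$ does not move, contrary to your claim about $(\lambda_0,\ee_0)$. Consequently the Hamiltonian depends on $\partial_\ee S$ alone, with no explicit $\ee$ and no $|\partial_\lambda S|$.

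This is also why the paper needs no operator-valued machinery. For fixed $\lambda$, $S$ depends only on $\widetilde\mu_{|X_t-\lambda|}=\widetilde\mu_{|X_0-\lambda|}\boxplus\mu^{\boxplus t}$ (Theorem~\ref{thm:symmetrization}), and scalar subordination suffices. In this normalization your circular consistency check reads $\partial_tS=\frac14(\partial_\ee S)^2$; the factor $\ee$ in the form you recall comes from regularizing by $+\ee$ instead of $+\ee^2$. Finally, your analytic continuation from large $\ee$ is unnecessary. Free infinite divisibility of $\mu^{\boxplus t}$ gives the global inversion of Theorem~\ref{thm:global_inversion}, so the fixed-point relation holds directly for every $\ee>0$.
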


The main analytic inputs of the paper come from two sources. The first is the global inversion formula for freely infinitely divisible laws developed in \cite{B1997free, BB2004, BB2005partially, HW2022Reg}. The second is the subordination approach to Brown measures of sums with one $R$-diagonal summand developed in \cite{BZ2022}; see also \cite{BBS2018eigenvalues} for an earlier and more general approach.

\subsection*{Organization of the paper}

Section~\ref{sec:preliminaries} recalls the necessary background. Sections~\ref{sec:id-rdiagonal} and~\ref{sec:homogeneous-polynomials} study the structure and several properties of freely infinitely divisible $R$-diagonal elements. Section~\ref{sec:support_infinitely_divisible_rdiag} studies the support of Brown measures and property (H) for freely infinitely divisible $R$-diagonal elements, and Section~\ref{sec:semigroup-regularity} treats the associated semigroup and derives a Hamilton--Jacobi equation and other PDE descriptions.

\section{Preliminaries}
\label{sec:preliminaries}

Throughout this paper, we work in a tracial $W^*$-probability space $(\MM,\tau)$ unless stated otherwise. 
Let $\widetilde{\MM}$ denote the collection of all closed densely defined operators affiliated with $\MM$. For a self-adjoint operator $T\in\widetilde{\MM}$, let $\mu_{T}$ denote the spectral measure of $T$. 

For $A\in\widetilde{\MM}$ and $p\in\RR$, we write
\begin{align*}
\mom_p(A)\coloneqq \int_0^\infty t^p \di{\mu_{|A|}}(t),
\end{align*}
with the convention that the value is allowed to be $+\infty$. For negative $p$, the integrand is interpreted as $+\infty$ at $t=0$. Thus $\mom_p(A)=+\infty$ for $p<0$ whenever $\tau(\ker A)>0$, where $\ker A$ denotes the projection onto the kernel of $A$, so that $\tau(\ker A)=\mu_{|A|}(\{0\})$. In what follows, we shall mainly use the cases $p=2$ and $p=-2$.

In this section, we recall several analytic results from the theory of free additive convolution for self-adjoint operators, together with the notions of Brown measure and $R$-diagonality used later in the paper.

\subsection{Subordination functions and free infinite divisibility}

For a self-adjoint affiliated operator $Y\in\widetilde{\MM}$, we define the Cauchy transform $G_Y$ and the reciprocal Cauchy transform $F_Y$ by
\begin{align*}
G_Y(z)\coloneqq\int_{\RR}\frac{1}{z-t}\di{\mu_Y}(t),\quad F_Y(z)\coloneqq\frac{1}{G_Y(z)},
\end{align*}
for $z\in\CC^+$. The following theorem states the existence of subordination functions for the free additive convolution.

\begin{theorem}[\cite{B1998process, V1993analogue, BB2007new}]
\label{thm:subord_func}
Let $X_1,X_2\in\widetilde{\MM}$ be self-adjoint and freely independent, and assume that neither $X_1$ nor $X_2$ is a scalar multiple of the identity. Let $X\coloneqq X_1+X_2$. Then there exist unique analytic functions $\omega_1,\omega_2\colon\CC^+\to\CC^+$ such that, for every $z\in\CC^+$,
\begin{enumerate}
\item $F_X(z)=F_{X_1}(\omega_1(z))=F_{X_2}(\omega_2(z))$,
\item $F_X(z)+z=\omega_1(z)+\omega_2(z)$, and
\item $\lim_{y\to\infty}\frac{\omega_1(\iu y)}{\iu y}=\lim_{y\to\infty}\frac{\omega_2(\iu y)}{\iu y}=1$.
\end{enumerate}
\end{theorem}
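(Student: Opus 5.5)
The plan is the standard analytic route: first solve a fixed-point problem for $\omega_1$ alone, then define $\omega_2$ and read off properties (1)–(3). We write $F_j\coloneqq F_{X_j}$. By Nevanlinna theory, $F_j(z)=z+h_j(z)$ where $h_j$ maps $\CC^+$ into $\CC^+\cup\RR$. Moreover $\Im F_j(z)\ge \Im z$, with equality only if $X_j$ is a constant. Since neither $X_j$ is a scalar multiple of the identity, the inequality is strict for both $j=1,2$. We also have $F_j(\iu y)/(\iu y)\to 1$ as $y\to\infty$. This holds for unbounded measures too, with no moment assumptions, since $G(\iu y)\,\iu y\to 1$ by dominated convergence.

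\textbf{Fixed-point construction.} Fix $z\in\CC^+$ and set $f_z(w)\coloneqq z+h_2(z+h_1(w))$ for $w\in\CC^+$. Since $\Im h_2\ge 0$, we get $\Im f_z(w)\ge \Im z>0$, so $f_z$ maps $\CC^+$ into the half-plane $\{\Im w\ge \Im z\}$. This is a relatively compact subset of $\CC^+$ in the hyperbolic metric in the imaginary direction, but not in the horizontal direction. The Denjoy--Wolff theorem says that a self-map of $\CC^+$ that is not an automorphism either has a unique attracting fixed point in $\CC^+$ or its iterates converge to a boundary point (possibly $\infty$). Here $f_z$ is not an automorphism, because it omits a half-plane. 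The main obstacle is to exclude escape to the boundary.

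Escape to a finite real point is impossible, since $\Im f_z\ge \Im z$. To exclude escape to $\infty$, I would use the Julia–Carathéodory angular derivative at $\infty$. If $\infty$ were the Denjoy--Wolff point, then $\lim f_z(w)/w\ge 1$ along $w=\iu y$. But $f_z(\iu y)/(\iu y)=h_2(z+h_1(\iu y))/(\iu y)+o(1)$, and $h_j(w)=o(|w|)$ nontangentially, so this limit is $0$, a contradiction. Hence there is a unique fixed point $\omega_1(z)\in\CC^+$, and it is the locally uniform limit of the iterates $f_z^{\circ n}(\iu)$. Since each iterate is analytic in $z$, Vitali/Montel gives that $\omega_1$ is analytic on $\CC^+$.

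\textbf{Defining $\omega_2$ and verifying (1)–(2).} Set $\omega_2(z)\coloneqq z+h_1(\omega_1(z))=F_1(\omega_1(z))-\omega_1(z)+z$. Then $\Im\omega_2\ge\Im z>0$. The fixed-point equation reads $\omega_1=z+h_2(\omega_2)=F_2(\omega_2)-\omega_2+z$. Together these give $F_1(\omega_1)=F_2(\omega_2)=\omega_1+\omega_2-z$.

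\textbf{Asymptotics (3).} Since $\Im\omega_j(\iu y)\ge y$, the relation $\omega_1=z+h_2(\omega_2)$ and $h_2(w)=o(|w|)$ should yield $\omega_1(\iu y)/(\iu y)\to1$, and similarly for $\omega_2$. I expect some care here, because $\omega_2$ might approach $\infty$ tangentially. To handle this I would use the bound $|h(w)|\le C(\Im w)^{-1}|w|^2$ type estimates from the Nevanlinna representation in the sector where $\Im w\ge y$.

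\textbf{Identifying $F_X$, then uniqueness.} To show that the common value equals $F_X(z)$, I would argue first for bounded $X_j$ on the domain near $\iu\infty$. There, $\omega_j$ coincide with the Voiculescu-transform inverses: $\varphi_{X}=\varphi_{X_1}+\varphi_{X_2}$ gives $F_X^{-1}(w)=F_1^{-1}(w)+F_2^{-1}(w)-w$, which matches property (2). I then extend by analytic continuation. For affiliated $X_j$, I would approximate by spectral truncations; these converge in distribution, and the fixed point depends continuously on $F_j$ (locally uniform convergence) by the attracting property. Uniqueness follows because any pair satisfying (1)–(2) makes $\omega_1$ a fixed point of $f_z$, which is unique.
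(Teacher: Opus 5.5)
Your route is the Belinschi--Bercovici Denjoy--Wolff argument of \cite{BB2007new}, which is one of the sources the paper cites instead of giving a proof. The skeleton is sound: the fixed point of $f_z$, the definition $\omega_2=z+h_1(\omega_1)$, uniqueness from the uniqueness of a fixed point of a non-identity self-map of $\CC^+$, and the identification with $F_X$ by matching $F_{X_1}^{\langle-1\rangle}\circ F_X$ with the fixed point near $\iu\infty$ and then using the identity theorem.

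\textbf{The gap.} It sits in the step you yourself call the main obstacle. From $f_z(\iu y)/(\iu y)=h_2(u_y)/(\iu y)+o(1)$, with $u_y\coloneqq z+h_1(\iu y)$, you conclude that the limit is $0$ because $h_2(w)=o(|w|)$ nontangentially. But nothing forces $u_y$ to stay bounded or to approach $\infty$ nontangentially. For example, $w+\log w$ (principal branch) is the reciprocal Cauchy transform of a non-degenerate probability measure, and for it $u_y=z+\log y+\iu\pi/2$ escapes horizontally. Along horizontal lines a Nevanlinna function without linear term can grow faster than $|w|$: atoms of its Nevanlinna measure $\rho_2$ at points $t_n\to\infty$ give $\Im h_2(t_n+\iu c)\ge\rho_2(\{t_n\})(1+t_n^2)/c$. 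So the nontangential asymptotics say nothing along that curve, and the general bound $|h_2(w)|\le C(1+|w|^2)/\Im w$ is too weak on its own.

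\textbf{How to close it.} The conclusion is true, but it needs an extra input.
\begin{enumerate}
\item Schwarz--Pick for $h_1$ at the points $\iu y$ and $\iu$ gives $|h_1(\iu y)-h_1(\iu)|^2\le\Im h_1(\iu)\,y\,\Im h_1(\iu y)$. Hence $|u_y|^2\le C(1+y\,\Im u_y)$, while $\Im z<\Im u_y=o(y)$.
\item Write $\Im h_2(u)=\int_{\RR}\frac{(1+t^2)\Im u}{|t-u|^2}\di{\rho_2}(t)$. The integrand of $\Im h_2(u_y)/y$ is at most $(1+t^2)/(y\,\Im z)$, which tends to $0$ pointwise. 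It is also at most $\frac{1+2|u_y|^2}{y\,\Im u_y}+\frac{2\Im u_y}{y}$, which is bounded for $y\ge1$ by step (1).
\item Dominated convergence then gives $\Im f_z(\iu y)/y\to0$. This contradicts Julia--Carath\'eodory if $\infty$ were the Denjoy--Wolff point.
\end{enumerate}
Alternatively, by Vitali the map $z\mapsto\lim_n f_z^{\circ n}(\iu)$ is either analytic into $\CC^+$ or identically $\infty$. For $z=\iu y$ with $y$ large, $F_{X_1}^{\langle-1\rangle}(F_X(z))$ is already a fixed point of $f_z$ in $\CC^+$, which rules out the second case.

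\textbf{Two smaller points.}
\begin{enumerate}
\item In (3), your suggested estimate $|h(w)|\lesssim|w|^2/\Im w$ would only give $O(y)$, not $o(y)$. Instead, note that $\omega_j$ is itself a self-map of $\CC^+$ with $\Im\omega_j(z)\ge\Im z$. So $\omega_j(\iu y)/(\iu y)\to b_j\in[1,\infty)$, which already means nontangential approach. Then $\omega_2=z+h_1(\omega_1)$ and $\omega_1=z+h_2(\omega_2)$ force $b_1=b_2=1$.
\item For unbounded $X_j$, the truncation argument is unnecessary, and it rests on a continuity of the fixed point in $F_j$ that you have not proved. The linearization $\phi_X=\phi_{X_1}+\phi_{X_2}$ holds on a truncated cone for arbitrary affiliated $X_j$ (Bercovici--Voiculescu), so your identification argument applies verbatim.
\end{enumerate}
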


For a self-adjoint affiliated operator $Y$, the function $F_Y$ admits an analytic right inverse $F_Y^{\langle-1\rangle}$ on a truncated cone of the form $\Gamma_{\alpha,\beta}=\{ x+\iu y\colon y>\beta, \alpha y>|x|\}$ for some $\alpha>0, \beta>0$, and the Voiculescu transform of $Y$ is defined there by
\begin{align*}
\phi_Y(z)\coloneqq F_Y^{\langle-1\rangle}(z)-z.
\end{align*}
The $R$-transform $R_Y$ of $Y$ is defined by $R_Y(w)=\phi_Y(1/w)$ in the domain $\Delta_{\alpha,\beta}\coloneqq\{z^{-1}\colon z\in\Gamma_{\alpha,\beta}\}$ in $\CC^-$. The Voiculescu transform linearizes free additive convolution; that is,
\begin{align*}
\phi_X(z)=\phi_{X_1}(z)+\phi_{X_2}(z)
\end{align*}
on the intersection of the domains, where $X_1,X_2\in \widetilde{\MM}$ are free and $X\coloneqq X_1+X_2$. Equivalently, $R_X(z)=R_{X_1}(z)+R_{X_2}(z)$ in a domain in $\CC^-$ where all these functions are defined.

The next proposition gives the standard characterization of free infinite divisibility in terms of the Voiculescu transform.

\begin{proposition}[{\cite[Theorem 5.10]{BV1993free}}]
\label{prop:free_LH}
Let $Y\in\widetilde{\MM}$ be self-adjoint. Then $Y$ is freely infinitely divisible if and only if $\phi_Y$ has an analytic continuation to $\CC^+$ with values in $\CC^-\cup\RR$. In this case there exist a unique constant $\gamma_Y\in\RR$ and a unique finite positive Borel measure $\sigma_Y$ on $\RR$ such that
\begin{align*}
\phi_Y(z)=\gamma_Y+\int_{\RR}\frac{1+sz}{z-s}\di{\sigma_Y}(s),\quad z\in\CC^+.
\end{align*}
We call $(\gamma_Y,\sigma_Y)$ the free generating pair of $Y$.
\end{proposition}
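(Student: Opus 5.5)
The plan follows the classical route of Bercovici--Voiculescu: use the Lévy--Hinčin-type representation for the forward direction, and a limit theorem for the converse.

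\emph{Forward direction.} Suppose $Y$ is freely infinitely divisible. For each $n$, write $Y$ in distribution as a sum of $n$ free identically distributed self-adjoint summands $Y_{n,1},\dots,Y_{n,n}$. By the linearization property of $\phi$, on a common truncated cone $\Gamma_{\alpha,\beta}$ we get $\phi_Y=n\,\phi_{Y_{n,1}}$. I will take as given the (standard) fact that the domains on which $\phi_{Y_{n,1}}$ is defined grow, and that $F_{Y_{n,1}}^{\langle-1\rangle}$ extends analytically along the subordination picture. Concretely, Theorem~\ref{thm:subord_func} applied to $Y=Y_{n,1}+(Y_{n,2}+\dots+Y_{n,n})$ shows $F_Y=F_{Y_{n,1}}\circ\omega_1$ with $\omega_1\colon\CC^+\to\CC^+$. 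Combining $F_Y(z)+z=\omega_1(z)+\omega_2(z)$ with the linear relation between the two Voiculescu transforms, the subordination function can be expressed as
\[
\omega_1(z)=\tfrac1n z+\bigl(1-\tfrac1n\bigr)F_Y(z).
\]
This gives an analytic continuation of $\phi_{Y_{n,1}}$ to $F_Y(\CC^+)$, and therefore of $\phi_Y=n\phi_{Y_{n,1}}$. Letting $n\to\infty$, the domains $\{\tfrac1n z+(1-\tfrac1n)F_Y(z)\}$ exhaust $\CC^+$, because $\Im F_Y(z)\ge\Im z$. The sign claim comes from $\Im F_{Y_{n,1}}(w)\ge\Im w$, which gives $\Im\phi_{Y_{n,1}}\le0$ and hence $\Im\phi_Y\le0$.

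\emph{Converse.} If $\phi_Y$ extends to $\CC^+$ with values in $\CC^-\cup\RR$, then the Nevanlinna representation applied to $-\phi_Y$, together with the growth $\phi_Y(z)=o(z)$ in the cone, yields the formula with the pair $(\gamma_Y,\sigma_Y)$. Uniqueness follows from Stieltjes inversion. For each $n$, the function $\phi_Y/n$ is again of this form, with pair $(\gamma_Y/n,\sigma_Y/n)$. What remains is to show that every function of this form is the Voiculescu transform of some probability measure. For this I would approximate it by free Poisson-type compounds. For $\sigma$ with compact support avoiding $0$, the transform equals a limit of $\phi$'s of $\delta_a^{\boxplus}$-shifts of measures $\bigl((1-\tfrac1N)\delta_0+\tfrac1N\delta_s\bigr)^{\boxplus N}$. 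I would then invoke the continuity theorem: uniform convergence of $\phi$ on a cone together with $\phi(iy)=o(y)$ uniformly implies weak convergence of the laws. Closing under this limit gives a measure $\nu_n$ with $\phi_{\nu_n}=\phi_Y/n$. Then $\nu_n^{\boxplus n}=\mu_Y$, which is infinite divisibility.

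\emph{Main obstacle.} The hardest step is the converse, namely realizing an arbitrary $(\gamma,\sigma)$ as a Voiculescu transform. This needs the continuity/tightness theorem relating convergence of $\phi$ on cones to weak convergence. I would proceed in two stages: first establish tightness via the estimate $\Im\phi(iy)/y\to0$ uniformly, and then pass to general $\sigma$ by weak approximation. Since the proposition is cited from \cite{BV1993free}, in the paper itself one would simply refer to that source.
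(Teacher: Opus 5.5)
The paper gives no proof of this statement; it is quoted from Bercovici--Voiculescu. Your converse is an acceptable sketch: the Nevanlinna representation of $-\phi_Y$, then realizing $(\gamma_Y/n,\sigma_Y/n)$ via compound free Poisson approximation and the Bercovici--Voiculescu continuity theorem. The forward direction, however, has a genuine gap.

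Your identity $\omega_1(z)=\frac{1}{n}z+(1-\frac{1}{n})F_Y(z)$ is correct, but it does not continue $\phi_Y$ anywhere new.

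\begin{itemize}
\item What it yields is $\phi_{Y_{n,1}}(F_Y(z))=\omega_1(z)-F_Y(z)=\frac{1}{n}(z-F_Y(z))$, that is, $\phi_Y(F_Y(z))=z-F_Y(z)$. This is just the definition of $\phi_Y$ read off from $F_Y$.
\item Using it to define $\phi_Y$ on $F_Y(\CC^+)$ presupposes that $F_Y$ is univalent on $\CC^+$, which you have not shown.
\item Even granting univalence, the domain $F_Y(\CC^+)$ does not depend on $n$ and is in general a proper subset of $\CC^+$. For the semicircle law of variance $t$, $F_Y(\CC^+)=\{w\in\CC^+:|w|>\sqrt t\}$, whereas $\phi_Y(w)=t/w$ must be continued to all of $\CC^+$.
\item The exhaustion claim is false. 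The set $\omega_1(\CC^+)$ is the range of a right inverse of $F_{Y_{n,1}}$, not the domain of $\phi_{Y_{n,1}}$. Since $\omega_1\to F_Y$ locally uniformly, it does not fill out $\CC^+$. In the semicircular example, $\omega_1(\CC^+)=\{w\in\CC^+:\int|w-x|^{-2}\,\mathrm{d}\mu_{Y_{n,1}}(x)<\frac{n}{(n-1)t}\}$. Because $\mu_{Y_{n,1}}$ is supported in $[-2\sqrt{t/n},2\sqrt{t/n}]$, this set misses the half-disc $\{|w|<\sqrt t/2\}$ for all large $n$.
\item The inequality $\Im F_Y(z)\ge\Im z$ pushes images upward, which is the wrong direction for exhaustion.
\end{itemize}

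So everything rests on the ``standard fact'' you take as given, namely that the domains of $\phi_{Y_{n,1}}$ grow to $\CC^+$. That fact is the whole content of this implication.

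What is missing is infinitesimality of the $n$-th roots, together with an inversion lemma on arbitrarily large truncated cones.

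\begin{enumerate}
\item Your subordination identity does give infinitesimality. You have $F_{Y_{n,1}}(\omega_1(z))=F_Y(z)$ with $\omega_1\to F_Y$ locally uniformly. Hence every locally uniform subsequential limit $f$ of the normal family $\{F_{Y_{n,1}}\}$ satisfies $f(w)=w$ on the open set $F_Y(\CC^+)$; degenerate limits ($\infty$ or a real constant) are excluded by the same identity. It follows that $F_{Y_{n,1}}\to\mathrm{id}$ and $\mu_{Y_{n,1}}\to\delta_0$ weakly.
\item You then need the lemma that if $\nu_n\to\delta_0$, then for every $\alpha,\beta>0$ the function $F_{\nu_n}$ has an analytic right inverse on $\Gamma_{\alpha,\beta}$ for all large $n$. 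This is a Rouch\'e argument using $F_{\nu_n}(z)-z=o(|z|)$ uniformly in $n$ on cones and $F_{\nu_n}\to\mathrm{id}$ locally uniformly.
\item On each $\Gamma_{\alpha,\beta}$, the functions $n\phi_{Y_{n,1}}$ for large $n$ agree with $\phi_Y$ near infinity. They have nonpositive imaginary part by your sign argument. By the identity theorem they glue to the required extension on $\bigcup_{\alpha,\beta}\Gamma_{\alpha,\beta}=\CC^+$.
\end{enumerate}
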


When one of the summands is freely infinitely divisible, the subordination functions admit further analytic properties. We shall use the following form of this result.

\begin{theorem}[{\cite[Theorem 4.6, Proposition 4.7]{BB2005partially}, \cite[Lemma 4]{B1997free}, \cite[Proposition 3.1]{HW2022Reg}}]
\label{thm:global_inversion}
Let $X_1,X_2\in\widetilde{\MM}$ be self-adjoint and freely independent, and assume that $X_2$ is freely infinitely divisible and not a scalar multiple of the identity. Let $X\coloneqq X_1+X_2$ and define
\begin{align*}
H(z)&\coloneqq z+\phi_{X_2}(F_{X_1}(z))=z+R_{X_2}(G_{X_1}(z)),
\end{align*}
and
\begin{align*}
\Omega_{H}\coloneqq\{z\in\CC^+ \colon \Im H(z)>0\}.
\end{align*}
Then 
\begin{enumerate}
\item $\Omega_{H}$ is simply connected.
\item The map $H\colon\Omega_{H}\to\CC^+$ is an analytic bijection and extends as a continuous map from $\cl{\Omega_{H}}$ onto $\CC^+\cup\RR$.
\item The subordination function $\omega_1\colon\CC^+\to\CC^+$ admits a continuous extension $\omega\colon\CC^+\cup\RR\to\cl{\Omega_{H}}$ such that
\begin{align*}
H(\omega(z))&=z, \quad z\in\CC^+\cup\RR,\\
\omega(H(z))&=z, \quad z\in\cl{\Omega_{H}}.
\end{align*}
Moreover, for any $z\in\CC^+$,
\begin{align*}
    \omega_1(z)=z-\phi_{X_2}(F_X(z))=z-R_{X_2}(G_X(z)).
\end{align*}
\item The boundary satisfies $\partial\Omega_H=\omega(\RR)$ and is the graph of a function on $\RR$, described explicitly in terms of $H$.
\end{enumerate}
\end{theorem}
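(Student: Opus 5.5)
The plan follows the classical argument of \cite{B1997free}, extended to unbounded operators as in \cite{BB2005partially, HW2022Reg}. Everything rests on the Nevanlinna form of $\phi_{X_2}$ from Proposition~\ref{prop:free_LH}. Fix $z\in\CC^+$ and put $w=F_{X_1}(z)\in\CC^+$. Since $\Im F_{X_1}(z)\ge \Im z$, we have $\Im w\ge\Im z$. Writing $\phi_{X_2}(w)=\gamma+\int\frac{1+sw}{w-s}\di{\sigma}(s)$, we get
\begin{align*}
\Im H(z)=\Im z-\Im w\int_{\RR}\frac{1+s^2}{|w-s|^2}\di{\sigma}(s).
\end{align*}
So $\Omega_H$ is the set where $\Im z/\Im F_{X_1}(z)>\int\frac{1+s^2}{|F_{X_1}(z)-s|^2}\di\sigma(s)$. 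This formula gives the boundary description: for fixed $x\in\RR$, the function $y\mapsto \Im H(x+\iu y)$ turns out to be positive exactly for $y$ above a threshold $f(x)\ge0$, with $f$ continuous. To show this, I will use the integral representation $F_{X_1}(z)=z+c+\int\frac{1+tz}{t-z}\di\rho(t)$ and check that the ratio of the two sides is monotone in $y$. This is the standard computation of \cite[Lemma 4]{B1997free}. It gives (4) and the simple connectivity in (1), since $\Omega_H=\{x+\iu y: y>f(x)\}$ is the region above a graph.

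Next, for (3), I will use the subordination from Theorem~\ref{thm:subord_func}. Because $\phi_{X}=\phi_{X_1}+\phi_{X_2}$, evaluating at $F_X(z)$ on a cone gives $\omega_1(z)=F_{X_1}^{\langle-1\rangle}(F_X(z))=z-\phi_{X_2}(F_X(z))$. By the analytic continuation of $\phi_{X_2}$ to $\CC^+$, this identity extends to all $z\in\CC^+$. Composing with $F_{X_1}(\omega_1(z))=F_X(z)$ gives $H(\omega_1(z))=\omega_1(z)+\phi_{X_2}(F_X(z))=z$ on $\CC^+$. Hence $\omega_1(\CC^+)\subset\Omega_H$, and $\omega_1$ is a right inverse of $H$ there. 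For bijectivity in (2), injectivity of $H$ on $\Omega_H$ follows from this: if $H(z_1)=H(z_2)=\zeta$ with $z_1,z_2\in\Omega_H$, I will show that $z_j=\omega_1(\zeta)$. The point $z_j$ is a fixed point of the map $u\mapsto \zeta-\phi_{X_2}(F_{X_1}(u))$, which sends $\CC^+$ into itself because $\phi_{X_2}$ takes values in $\CC^-\cup\RR$. The Denjoy--Wolff theorem then says such a fixed point in $\CC^+$ is unique, unless the map is an automorphism, and that case is excluded because $X_2$ is not a scalar. Thus $H|_{\Omega_H}$ and $\omega_1$ are mutually inverse, and $H(\Omega_H)=\CC^+$.

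The main obstacle is the continuous extension of $\omega_1$ to $\RR$ and of $H$ to $\cl{\Omega_H}$ in the unbounded case. I would first obtain local boundedness of $\omega_1$ near each $x_0\in\RR$. The argument: $H(\omega_1(z))=z$ together with $\Im H>0$ forces $\Im\omega_1(z)\,\int\frac{1+s^2}{|F_{X_1}(\omega_1)-s|^2}\di\sigma\le\Im\omega_1(z)$, and combined with the asymptotics of $F_{X_1}$ this prevents $\omega_1(z)$ from escaping to infinity as $z\to x_0$. Given boundedness, any two cluster points $\omega',\omega''$ of $\omega_1(z)$ as $z\to x_0$ satisfy $H(\omega')=H(\omega'')=x_0$, with $H$ extended continuously to $\cl{\Omega_H}$. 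Continuity of $H$ on the closure comes from $\Im F_{X_1}>0$ there (via the graph description) together with continuity of $\phi_{X_2}$ on $\CC^+$. The cluster set is connected, so either it is a single point, or $H$ is constant on an arc of the boundary graph, which contradicts the identity theorem and the monotonicity in $y$ established above. This gives the extension $\omega$ and the relations $H\circ\omega=\mathrm{id}$ on $\CC^+\cup\RR$ and $\omega\circ H=\mathrm{id}$ on $\cl{\Omega_H}$. Surjectivity onto $\CC^+\cup\RR$ and the identification $\partial\Omega_H=\omega(\RR)$ then follow directly.
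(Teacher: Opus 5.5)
The paper gives no proof of this statement; it quotes it from \cite{B1997free,BB2005partially,HW2022Reg}. Your interior arguments are sound. You obtain $\omega_1(z)=z-\phi_{X_2}(F_X(z))$ by analytic continuation, then $H\circ\omega_1=\mathrm{id}$, and injectivity of $H$ on $\Omega_H$ through uniqueness of the fixed point of $g_\zeta(u)=\zeta-\phi_{X_2}(F_{X_1}(u))$. Incidentally, $g_\zeta$ fails to be an automorphism simply because $\Im g_\zeta\ge\Im\zeta$; non-scalarity of $X_2$ plays no role there.

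The gap is in the boundary analysis, which is the real content of (2)--(4). It shows up in three places.

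\emph{Monotonicity in $y$.} It is not clear how the Nevanlinna form of $F_{X_1}$ alone would give this. For symmetric Bernoulli $X_1$ one has $F_{X_1}(\iu y)=\iu(y+1/y)$. On $(0,1)$ the factor $\Im F_{X_1}(\iu y)/y$ decreases while $\int\frac{1+s^2}{|F_{X_1}(\iu y)-s|^2}\di\sigma(s)$ increases, so no factor-by-factor check can work.

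\emph{Continuity of $H$ on $\cl{\Omega_H}$.} Your justification, that $\Im F_{X_1}>0$ there, is false. $\cl{\Omega_H}$ usually contains real points: for bounded $X_1$ and semicircular $X_2$, $f(x)=0$ for all large $|x|$, and there $F_{X_1}(x)\in\RR$. At such points neither $F_{X_1}$ nor $\phi_{X_2}$ is a priori continuous, and this is precisely the delicate point.

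\emph{Local boundedness of $\omega_1$.} Your constraint only yields $\int\frac{1+s^2}{|F_{X_1}(\omega_1)-s|^2}\di\sigma(s)\le1$. This bounds $\phi_{X_2}(F_{X_1}(\omega_1))$ only in terms of $|F_{X_1}(\omega_1)|$. Since $F_{X_1}(w)\sim w$ holds only nontangentially, for unbounded $X_1$ it does not stop $\omega_1(z)$ from escaping to infinity along $\partial\Omega_H$.

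The missing idea is to apply the Nevanlinna representation to the composite rather than to $F_{X_1}$. The function $z-H(z)=-\phi_{X_2}(F_{X_1}(z))$ maps $\CC^+$ into $\CC^+\cup\RR$. It is $o(y)$ at $z=\iu y$, because $F_{X_1}(\iu y)\sim\iu y$ and $\phi_{X_2}(w)=o(|w|)$ nontangentially, so it has no linear term:
\[
z-H(z)=a+\int_{\RR}\frac{1+tz}{t-z}\di\rho(t),\qquad \Im H(x+\iu y)=y\Bigl(1-\int_{\RR}\frac{1+t^2}{(x-t)^2+y^2}\di\rho(t)\Bigr),
\]
with $a\in\RR$ and $\rho$ a finite positive measure. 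This is formally the setting of Biane's semicircular case, and it gives the threshold $f$ and its continuity.

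Moreover, on $\cl{\Omega_H}$ one has $\int\frac{1+t^2}{|t-w|^2}\di\rho(t)\le1$. Hence $\int\bigl|\frac{1+tw}{t-w}\bigr|^2\di\rho(t)\le1+|w|^2$, and $\rho(\{x\})=0$ at real boundary points. This $L^2(\rho)$ bound does two things:
\begin{itemize}
\item by Vitali's theorem it gives continuity of $H$ at real boundary points;
\item after splitting $\rho$ into a compactly supported part and a tail of small mass, it gives $|H(w)-w|\le C_\varepsilon+\varepsilon|w|$ on $\Omega_H$, which is the properness behind local boundedness of $\omega_1$.
\end{itemize}

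With these facts in place, your cluster-set argument goes through. On an arc of $\partial\Omega_H$ lying in $\RR$, use Schwarz reflection in place of the identity theorem. The argument is then a legitimate alternative to Biane's route, which instead proves that $x\mapsto H(x+\iu f(x))$ is a strictly increasing homeomorphism of $\RR$ and inverts it.
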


We also recall the $S$-transform. Let $\mu$ be a probability measure on $[0,\infty)$ with $\mu(\{0\})<1$. For $z<0$, define
\begin{align*}
\psi_\mu(z)=\int_0^\infty\frac{zx}{1-zx}\di{\mu}(x).
\end{align*}
Then $\psi_\mu$ is an increasing function from $(-\infty,0)$ onto $(\mu(\{0\})-1,0)$. If $\chi_\mu\colon(\mu(\{0\})-1,0)\to(-\infty,0)$ denotes its inverse, then the $S$-transform of $\mu$ is defined by
\begin{align*}
S_\mu(u)=\frac{1+u}{u}\chi_\mu(u),\quad \mu(\{0\})-1<u<0.
\end{align*}
The $S$-transform satisfies the multiplicative property $S_{\mu\boxtimes\nu}(u)=S_\mu(u)S_\nu(u)$ for $u$ in the common domain where they are defined.

We shall also use the following well-known observation, which relates the boundary behavior of the $S$-transform to the negative first moment.
\begin{lemma}
\label{lem:s_transform_rel}
For a probability measure $\mu$ on $[0, \infty)$ with $\mu(\{0\})=0$, we have
\begin{align*}
\lim_{u\to -1}S_\mu(u)=\int_0^\infty\frac{1}{x}\di{\mu}(x)\in(0,\infty].
\end{align*}
\end{lemma}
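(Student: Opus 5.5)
The plan is to use the explicit description of $\chi_\mu$ as the inverse of $\psi_\mu$ and let $u\to-1^+$, which forces $z\to-\infty$. Since $\mu(\{0\})=0$, $\psi_\mu$ is an increasing bijection from $(-\infty,0)$ onto $(-1,0)$. Hence $u\to -1^+$ corresponds exactly to $z=\chi_\mu(u)\to-\infty$. Writing $u=\psi_\mu(z)$, the definition gives
\begin{align*}
S_\mu(\psi_\mu(z))=\frac{1+\psi_\mu(z)}{\psi_\mu(z)}\,z .
\end{align*}
So it suffices to compute $\lim_{z\to-\infty}$ of the right-hand side.

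Next I compute $1+\psi_\mu(z)$. We have
\begin{align*}
1+\frac{zx}{1-zx}=\frac{1}{1-zx},
\end{align*}
so $1+\psi_\mu(z)=\int_0^\infty \frac{1}{1-zx}\di{\mu}(x)$. Multiplying by $z$ and writing $z=-s$ with $s>0$ gives
\begin{align*}
z\,(1+\psi_\mu(z))=-\int_0^\infty\frac{s}{1+sx}\di{\mu}(x).
\end{align*}
The integrand $\frac{s}{1+sx}=\frac{1}{x+1/s}$ increases monotonically to $1/x$ as $s\to\infty$; this is where $\mu(\{0\})=0$ is used, since otherwise the integrand would blow up on a set of positive mass. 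By the monotone convergence theorem,
\begin{align*}
z(1+\psi_\mu(z))\to-\int_0^\infty x^{-1}\di{\mu}(x)\in[-\infty,0).
\end{align*}

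For the denominator, $\psi_\mu(z)\to-1$ as $z\to-\infty$. This follows by dominated convergence, since $\frac{zx}{1-zx}\to-1$ for every $x>0$ and the integrand is bounded by $1$. Combining the two limits gives $S_\mu(u)\to\int_0^\infty x^{-1}\di{\mu}(x)$, including the case where this value is $+\infty$. Positivity of the limit is immediate because $\mu$ is a probability measure on $(0,\infty)$, so the integral is strictly positive.

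No step here is a real obstacle. The only point needing care is the infinite case, where one must check that the ratio still diverges. It does: the numerator tends to $-\infty$ while the denominator tends to the finite nonzero value $-1$, so the ratio tends to $+\infty$.
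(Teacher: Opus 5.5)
Your proof is correct and follows essentially the same route as the paper: write $u=\psi_\mu(-t)$ and note that $u\to-1$ exactly when $t\to\infty$. Then use $t(1+\psi_\mu(-t))=\int_0^\infty (x+1/t)^{-1}\di{\mu}(x)\to\int_0^\infty x^{-1}\di{\mu}(x)$ together with $\psi_\mu(-t)\to-1$ to evaluate $S_\mu(u)=-t(1+\psi_\mu(-t))/\psi_\mu(-t)$ in the limit.
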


\begin{proof}
First, by the assumption that $\mu(\{0\})=0$, we observe that $\lim_{t\to\infty}\psi_\mu(-t)=-1$ and 
\begin{align*}
\lim_{t\to\infty}t(1+\psi_\mu(-t))=\lim_{t\to\infty}\int_0^\infty\frac{1}{x+1/t}\di{\mu}(x)=\int_0^\infty\frac{1}{x}\di{\mu}(x)\in(0,\infty].
\end{align*}
Therefore, by the definition of the $S$-transform, we have
\begin{align*}
\lim_{u\to-1}S_\mu(u)=\lim_{t\to\infty}S_\mu(\psi_\mu(-t))=\lim_{t\to\infty}\frac{-t(1+\psi_\mu(-t))}{\psi_\mu(-t)}=\lim_{t\to\infty}\frac{-1}{\psi_\mu(-t)}\int_0^\infty\frac{1}{x}\di{\mu}(x)=\int_0^\infty\frac{1}{x}\di{\mu}(x).
\end{align*}
\end{proof}

\begin{remark}
The analytic functions introduced in this section, initially defined on $\CC^+$, can be extended analytically to $\CC^-$ via $f(z)\coloneqq\conj{f(\conj{z})}$. We use this fact implicitly in what follows.
\end{remark}

\subsection{$R$-diagonal elements, Brown measure, and their basic properties}

One of the central classes of operators in this paper is the class of $R$-diagonal elements. 
Recall that an operator $a\in\MM$ is $R$-diagonal if, for every $n\in\NatNum$, the cumulant $\kappa_n(a_1,a_2,\dots,a_n)$ vanishes whenever $n$ is odd or the arguments $a_1,a_2,\dots,a_n\in\{a,a^*\}$ are not alternating in $a$ and $a^*$. In the tracial case, the two alternating even cumulants coincide, and the sequence
\begin{align*}
\left(\kappa_{2n}(a,a^*,a,a^*,\dots,a,a^*)\right)_{n\ge1}=\left(\kappa_{2n}(a^*,a,a^*,a,\dots,a^*,a)\right)_{n\ge1}
\end{align*}
is called the determining sequence of $a$.
$R$-diagonal elements were introduced in \cite{NS1997rdiag}. They form a broad class of non-selfadjoint operators, including circular elements and Haar unitaries. The definition was extended to the unbounded setting in \cite{HS2007Brown}.
\begin{definition}[{\cite[Definition 3.2]{HS2007Brown}}]
Let $X_1,X_2\in\widetilde{\MM}$.
\begin{enumerate}
\item We write $X_1\stard X_2$ and say that $X_1$ and $X_2$ have the same $*$-distribution if there is a trace-preserving $*$-isomorphism $\phi\colon W^{*}(X_1)\to W^{*}(X_2)$ such that $\widetilde{\phi}(X_1)=X_2$.
\item We call $X_1$ and $X_2$ $*$-free if $W^*(X_1)$ and $W^*(X_2)$ are $*$-free.
\end{enumerate}
\end{definition}

\begin{definition}[{\cite[Definition 3.3]{HS2007Brown}}]
An operator $X\in\widetilde{\MM}$ is called $R$-diagonal if there exist a tracial $W^*$-probability space $(\NN,\tau_\NN)$ and $*$-free operators $U,H\in\widetilde{\NN}$ such that
\begin{enumerate}
\item $U$ is a Haar unitary,
\item $H\ge 0$, and
\item $X\stard UH$.
\end{enumerate}
\end{definition}

We recall the definition of Brown measure. We define
\begin{align*}
\log^+(\MM)\coloneqq\left\{T\in\widetilde{\MM}\colon\tau(\log^+|T|)=\int_0^\infty \log^+(t)\di{\mu_{|T|}}(t)<\infty\right\},
\end{align*}
where $\log^+(t)=\max\{\log t,0\}$. This is the natural class on which the Fuglede--Kadison determinant and the Brown measure are defined for possibly unbounded operators.

\begin{definition}
Let $X\in\log^+(\MM)$. The Fuglede--Kadison determinant of $X$ is defined by
\begin{align*}
\Delta(X)\coloneqq\exp(\tau(\log|X|)),
\end{align*}
where the right-hand side is understood in $[0,\infty)$. For $X\in\log^+(\MM)$, the function
\begin{align*}
L_X(\lambda)\coloneqq\log\Delta(X-\lambda)=\tau(\log|X-\lambda|), \quad \lambda\in\CC,
\end{align*}
is subharmonic on $\CC$, and the Brown measure of $X$ is the unique probability measure $\mu_X$ on $\CC$ such that
\begin{align*}
\di{\mu_X}(\lambda)=\frac{1}{2\pi}\Delta_\lambda L_X(\lambda)
\end{align*}
in the sense of distributions.
\end{definition}

For later use, we also introduce the regularized logarithmic potential
\begin{align*}
S_X(\lambda,\varepsilon)\coloneqq\tau(\log(|X-\lambda|^2+\varepsilon^2)),\quad \lambda\in\CC,\ \varepsilon>0.
\end{align*}

If $X$ is normal, then $\mu_X$ coincides with the usual spectral distribution of $X$, allowing us to use this notation consistently even for normal operators.

The Brown measure of bounded $R$-diagonal elements was studied in detail in \cite{HL2000Brown}, and this analysis was extended to the unbounded setting in \cite{HS2007Brown}. This description was also reformulated and rederived from the viewpoint of subordination in \cite{Z2023brown}. In general, for an $R$-diagonal element $T\in\log^+(\MM)$, the Brown measure $\mu_T$ is rotationally invariant and is determined by the distribution of $|T|$. Moreover, the support of the Brown measure need not coincide with the spectrum. The next theorem gives the formula for the radial distribution function and the criterion for the support to coincide with the spectrum.

\begin{theorem}[{\cite[Proposition 4.6]{HL2000Brown}, \cite[Theorem 4.17 and Remark 4.18]{HS2007Brown}}]
\label{thm:char_r_supp_spec}
Let $X\in\log^+(\MM)$ be an $R$-diagonal element such that $\mu_{|X|^2}$ is not a Dirac measure. Then $\mu_X$ is rotationally invariant and $\mu_X(\{0\})=\mu_{|X|^2}(\{0\})$. Moreover, for $r>0$,
\begin{align*}
\mu_X(\{z\in\CC \colon |z|\le r\})=
\begin{cases}
0,&0<r\le \mom_{-2}(X)^{-1/2},\\
1+S_{\mu_{|X|^2}}^{\langle-1\rangle}(r^{-2}),&\mom_{-2}(X)^{-1/2}<r<\mom_2(X)^{1/2},\\
1,&\mom_2(X)^{1/2}\le r,
\end{cases}
\end{align*}
where $S_{\mu_{|X|^2}}^{\langle-1\rangle}$ denotes the inverse function of the strictly decreasing function $S_{\mu_{|X|^2}}\colon(\mu_{|X|^2}(\{0\})-1,0)\to(\mom_2(X)^{-1},\mom_{-2}(X))$. Consequently,
\begin{align*}
\supp(\mu_X)=\{\lambda\in\CC\colon \mom_{-2}(X)^{-1/2}\le |\lambda|\le \mom_2(X)^{1/2}\}.
\end{align*}
Moreover, we have $\supp(\mu_X)\subsetneq\spec(X)$ if and only if at least one of the following holds:
\begin{enumerate}
\item $X$ does not have a bounded inverse in $\MM$ and $\mom_{-2}(X)<\infty$,
\item $X$ is unbounded and $\mom_2(X)<\infty$.
\end{enumerate}
\end{theorem}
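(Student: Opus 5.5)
This statement is a cited result of Haagerup--Larsen and Haagerup--Schultz, so the plan below follows their subordination/$S$-transform route rather than anything new.

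\textbf{Step 1: reduce to moduli.} Since $X\stard UH$ with $U$ Haar unitary and $*$-free from $H\ge0$, and $|UH-\lambda|$ has the same law as $|H-\lambda U^*|$, rotational invariance of $U$ shows that $L_X(\lambda)=\tau(\log|X-\lambda|)$ depends only on $r=|\lambda|$. The distribution of $|X-\lambda|$ is then determined by $\mu_{|X|}$ and by $|\lambda|$.

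Indeed, $X-\lambda$ is again $R$-diagonal in a suitable sense. Its symmetrized modulus law is $\widetilde{\mu}_{|X|}\boxplus\widetilde{\mu}_{|\lambda U|}$, that is, the free additive convolution of the symmetrized law of $|X|$ with the symmetric Bernoulli law $\frac12(\delta_r+\delta_{-r})$.

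\textbf{Step 2: compute the potential.} In the bounded case I would follow Haagerup--Larsen. Write $F(r)=\mu_X(\{|z|\le r\})$. By radial symmetry,
\[
F(r)=\frac{r}{2\pi}\cdot 2\pi\,\partial_r L_X(r)=r\,\partial_r L_X(r),
\]
interpreted distributionally. I would regularize with $S_X(\lambda,\varepsilon)$ and compute $\partial_r S_X$ through the subordination functions of Theorem~\ref{thm:subord_func} applied to the symmetrized laws above. Letting $\varepsilon\to0$ should yield an implicit equation for $F(r)$.

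That equation should read as follows: $u=F(r)-1$ satisfies $S_{\mu_{|X|^2}}(u)=r^{-2}$ whenever this equation has a solution. When it has none, $F$ is locally constant.

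\textbf{Step 3: identify the range of $S$.} The range of $S_{\mu_{|X|^2}}$ is identified using Lemma~\ref{lem:s_transform_rel}. Near $u\to-1$ (more generally $u\to\mu(\{0\})-1$) the limit of $S$ is $\mom_{-2}(X)$. Near $u\to0$ the limit is $\mom_2(X)^{-1}$, by a direct expansion of $\psi$ near $0$.

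Strict monotonicity of $S$ when $\mu_{|X|^2}$ is not Dirac gives the displayed cases. The atom at $0$ follows from $\lim_{r\to0}F(r)$. The unbounded extension goes by truncating $H$ and passing to the limit in $\log^+(\MM)$, using continuity of the Brown measure under such approximations as in Haagerup--Schultz.

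\textbf{Step 4: support versus spectrum.} The support formula is immediate from the radial distribution. For the spectrum, an $R$-diagonal $X$ has $\spec(X)$ rotationally invariant and equal to the annulus $\{\|X^{-1}\|^{-1}\le|\lambda|\le\|X\|\}$; here $\|X^{-1}\|=\infty$ if $X$ has no bounded inverse, and $\|X\|=\infty$ if $X$ is unbounded. This is because $X-\lambda$ is invertible exactly when $|\lambda|$ lies outside that range.

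By Jensen's inequality, $\mom_2(X)^{1/2}\le\|X\|$ and $\mom_{-2}(X)^{-1/2}\ge\|X^{-1}\|^{-1}$. When $X$ is bounded, these norms are attained as limits of $\mom_p^{1/p}$ but not as equalities, except in the Dirac case. This shows that strict inclusion happens exactly under conditions (1) or (2).

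\textbf{Main obstacle.} The hardest step is Step 2: rigorously passing $\varepsilon\to0$ in the subordination equations. The same difficulty appears in the spectrum identification of Step 4, which requires showing invertibility of $X-\lambda$ for $|\lambda|$ in the gap.
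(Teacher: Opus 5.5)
The paper gives no proof of this statement; it is quoted from Haagerup--Larsen and Haagerup--Schultz. Your Steps~1--3 outline a workable route, essentially the subordination rederivation the paper attributes to Zhong. However, Step~4 contains a real error, and Step~2 is announced rather than carried out.

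\textbf{Step 4 is false as stated.} The spectrum of an $R$-diagonal element is not the annulus $\{\|X^{-1}\|^{-1}\le|\lambda|\le\|X\|\}$ built from operator norms. For a circular element $c$ with $\tau(c^*c)=1$ one has $\|c\|=2$, yet $\spec(c)$ is the closed unit disc, since $\lim_n\|c^n\|^{1/n}=1$. Your own argument also signals the problem. You correctly note that $\mom_2(X)^{1/2}<\|X\|$ whenever $\mu_{|X|^2}$ is not a Dirac measure. Combined with your claimed description, this would give $\supp(\mu_X)\subsetneq\spec(X)$ for \emph{every} bounded $X$ covered by the theorem. That includes bounded invertible $X$, for which neither (1) nor (2) holds and the theorem asserts equality. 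So your last sentence does not follow from what precedes it.

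The missing ingredient is the Haagerup--Larsen spectral radius theorem, $r(X)=\|X\|_2=\mom_2(X)^{1/2}$ for bounded $R$-diagonal $X$. Its proof runs as follows:
\begin{enumerate}
\item $X^n$ is again $R$-diagonal with $\mu_{|X^n|^2}=\mu_{|X|^2}^{\boxtimes n}$, so $\|X^n\|_2=\|X\|_2^n$.
\item A norm estimate $\|X^n\|\le C_n\|X\|_2^{n-1}\|X\|$ with $C_n^{1/n}\to1$ then gives $r(X)\le\|X\|_2$, and $\|X^n\|\ge\|X^n\|_2$ gives the reverse inequality.
\end{enumerate}
For bounded invertible $X$, apply the same theorem to $X^{-1}$, which is again $R$-diagonal with $\|X^{-1}\|_2^2=\mom_{-2}(X)$. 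This gives $\spec(X)\subseteq\supp(\mu_X)$, hence equality.

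For bounded non-invertible $X$ you must still show that the hole $0<|\lambda|<\mom_{-2}(X)^{-1/2}$ lies in $\spec(X)$. For instance, suppose some circle $|\lambda|=s$ in the hole missed the rotation-invariant spectrum. Then the Riesz idempotent of the spectral part inside it would have trace $\mu_X(\{|\lambda|<s\})=0$, hence vanish, contradicting $0\in\spec(X)$. The unbounded cases need a separate argument. A route closer to your framework uses the criterion that $X-\lambda$ has a bounded inverse if and only if $0\notin\supp(\widetilde{\mu}_{|X|}\boxplus\tfrac12(\delta_{|\lambda|}+\delta_{-|\lambda|}))$. This can in principle be decided from the behaviour of the subordination functions near $0$.

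\textbf{Step 2 is the heart of the theorem and is only asserted.} Concretely, take $X_1=0$ in Proposition~\ref{prop:log_symm_subord}, write $\omega_j^{(\lambda)}(\iu\ee)=\iu W_j(\ee)$, and set $r=|\lambda|$.
\begin{enumerate}
\item The subordination relations make the right-hand side stationary in $(W_1,W_2)$. Hence $\frac r2\,\partial_r\tau(\log(|X-\lambda|^2+\ee^2))=r^2/(r^2+W_1(\ee)^2)$.
\item The same relations give $W_1(\ee)(W_2(\ee)-\ee)=r^2$ exactly.
\end{enumerate}
What remains, and what you do not address, is the limit $\ee\to0$:
\begin{enumerate}
\item the existence of $W_j(0)$;
\item the identity $1+\psi_{\mu_{|X|^2}}(-W_2(0)^{-2})=F(r)$, which together with $W_1(0)W_2(0)=r^2$ yields $S_{\mu_{|X|^2}}(F(r)-1)=r^{-2}$;
\item a proof that $0<W_1(0)<\infty$ exactly for $\mom_{-2}(X)^{-1/2}<r<\mom_2(X)^{1/2}$, with $W_1(0)=0$ beyond the outer radius (giving $F=1$) and $W_1(0)=\infty$ inside the inner hole (giving $F=0$).
\end{enumerate}
Since Proposition~\ref{prop:log_symm_subord} holds for affiliated operators, this argument needs no truncation. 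Your truncation step, as stated, relies on a continuity of Brown measures that fails in general and would have to be proved for this particular approximation. Finally, in Step~1, $X-\lambda$ is not $R$-diagonal; what you actually use is Theorem~\ref{thm:symmetrization} with $X_1=0$.
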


For a probability measure $\mu$ on $\RR$, we define its symmetrization by
\begin{align*}
\widetilde{\mu}(B)\coloneqq\frac{1}{2}(\mu(B)+\mu(-B)),
\end{align*}
where $-B\coloneqq\{-x\colon x\in B\}$. We shall repeatedly use the elementary identities
\begin{align}
G_{\widetilde{\mu}_{|A|}}(z)&=z\tau((z^2-A^*A)^{-1}),\quad F_{\widetilde{\mu}_{|A|}}(z)=\frac{1}{z\tau((z^2-A^*A)^{-1})}, \label{eq:symm-modulus-cauchy}
\end{align}
for $A\in\widetilde{\MM}$ and $z\in\CC^+$. These follow immediately from the identity $\frac{1}{2}(\frac{1}{z-t}+\frac{1}{z+t})=\frac{z}{z^2-t^2}$.

The following fact provides the basic relation between $R$-diagonal operators and free additive convolution on the real line.

\begin{theorem}[\cite{NS1997rdiag}, \cite{HL2000Brown}, \cite{HS2007Brown}]
\label{thm:symmetrization}
Let $X_1,X_2\in\widetilde{\MM}$ be $*$-free, and assume that $X_2$ is $R$-diagonal. Then, for every $\lambda\in\CC$,
\begin{align*}
\widetilde{\mu}_{|X_1+X_2-\lambda|}=\widetilde{\mu}_{|X_1-\lambda|}\boxplus\widetilde{\mu}_{|X_2|}.
\end{align*}
\end{theorem}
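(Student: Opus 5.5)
The plan is to reduce first to $\lambda=0$. Since $X_1-\lambda$ lies in $W^*(X_1)$ (affiliated) and is therefore still $*$-free from $X_2$, it suffices to prove
\begin{align*}
\widetilde{\mu}_{|A+B|}=\widetilde{\mu}_{|A|}\boxplus\widetilde{\mu}_{|B|}
\end{align*}
for $A,B\in\widetilde{\MM}$ $*$-free with $B$ $R$-diagonal. I will first treat the bounded case using the standard $2\times2$ matrix trick, and then pass to the unbounded case by approximation.

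\emph{Bounded case.} For $T\in\MM$ put
\begin{align*}
\mathbf{T}_T=\begin{pmatrix}0&T\\ T^*&0\end{pmatrix}\in M_2(\MM),
\end{align*}
with the trace $\mathrm{tr}_2\otimes\tau$. Since $\mathbf{T}_T^{2}=\mathrm{diag}(TT^*,T^*T)$ and $\tau(TT^*)^n=\tau(T^*T)^n$, the distribution of the self-adjoint element $\mathbf{T}_T$ is exactly $\widetilde{\mu}_{|T|}$; this is the matrix form of \eqref{eq:symm-modulus-cauchy}. Moreover $\mathbf{T}_{A+B}=\mathbf{T}_A+\mathbf{T}_B$ by linearity. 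It therefore suffices to show that $\mathbf{T}_A$ and $\mathbf{T}_B$ are free in $(M_2(\MM),\mathrm{tr}_2\otimes\tau)$.

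Let $D=\CC\oplus\CC$ be the diagonal scalar matrices, with conditional expectation $E_D=\mathrm{id}\otimes\tau$ restricted to diagonals. Because $A$ and $B$ are $*$-free over $\CC$, the algebras $M_2(\CC)\otimes W^*(A)$ and $M_2(\CC)\otimes W^*(B)$ are free with amalgamation over $M_2(\CC)$. Compressing to the subalgebras generated by $D$ together with $\mathbf{T}_A$, respectively $\mathbf{T}_B$, shows that $\mathbf{T}_A$ and $\mathbf{T}_B$ are free with amalgamation over $D$.

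The key input is the description of the $D$-valued cumulants of $\mathbf{T}_B$. One computes $\kappa^D_n(\mathbf{T}_B d_1,\dots,\mathbf{T}_B d_{n-1},\mathbf{T}_B)$ directly from the scalar cumulants of $B,B^*$. Only alternating entries survive, and by $R$-diagonality together with traciality the two alternating determining sequences coincide. The outcome is that each $D$-cumulant equals $\alpha_n$ times the product of the normalized traces of the $d_j$ (times $1_D$), with $\alpha_n$ the $n$-th free cumulant of $\widetilde{\mu}_{|B|}$.

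By the Nica--Shlyakhtenko--Speicher criterion, a $D$-valued variable whose $D$-cumulants are of this scalar form is free over $\CC$ (with respect to $\mathrm{tr}_2\otimes\tau$) from any algebra that is $D$-free from it and contains $D$. In particular $\mathbf{T}_B$ is free from $\mathbf{T}_A$, which yields the identity. I expect this cumulant bookkeeping to be the main obstacle. If it becomes messy, I would instead write $B\stard UH$ with a Haar unitary $U$ that is $*$-free from $\{A,H\}$, and argue via the $R$-diagonal cumulant formulas for $UH$.

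\emph{Unbounded case.} Again realize $B\stard UH$ with $U$ Haar unitary and $*$-free from $\{A,H\}$, and set $H_n=H\mathbbm{1}_{[0,n]}(H)$ and $A_n=A\,\mathbbm{1}_{[0,n]}(|A|)$. Then $UH_n$ is bounded and $R$-diagonal, is $*$-free from $A_n$, and $A_n+UH_n\to A+UH$ in measure. Convergence in measure gives weak convergence of $\mu_{|A_n+UH_n|}$, hence of the symmetrizations. Since free additive convolution is continuous under weak convergence (Bercovici--Voiculescu), we can pass to the limit in the bounded identity. Finally, the identity for $UH$ transfers to $X_2$ because both sides depend only on $*$-distributions, via the $*$-isomorphism in the definition of $\stard$.
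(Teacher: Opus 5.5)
The paper gives no proof of this result; it quotes it from the cited references. Your reduction to $\lambda=0$, the identification of the law of $\mathbf{T}_T$ with $\widetilde{\mu}_{|T|}$, and the truncation argument for the unbounded case are fine, and that approximation step is the standard one. The gap is in the bounded case, at the two steps that carry all the weight.

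\emph{The $D$-freeness step.} Freeness over $M_2(\CC)$ does not descend to freeness over the subalgebra $D$ by ``compressing''. As stated, without using $R$-diagonality, your claim is false. With $p_1=e_{11}$ and $p_2=e_{22}$, the elements $p_1\mathbf{T}_Ap_2=e_{12}\otimes A$ and $p_2\mathbf{T}_Bp_1=e_{21}\otimes B^*$ are $E_D$-centered. But $E_D$ of their product is $\mathrm{diag}(\tau(A)\conj{\tau(B)},0)$, which is nonzero in general. $D$-freeness does hold here, but only because $B$ is $R$-diagonal. This is the Nica--Shlyakhtenko--Speicher theorem that $B$ is $R$-diagonal if and only if $\mathbf{T}_B$ is free from $M_2(\CC)$ over $D$. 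That is where $\tau(B)=0$, $\kappa_2(B,B)=0$ and the other non-alternating vanishings are used, combined with the $M_2(\CC)$-freeness you noted.

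\emph{The cumulant step.} The $D$-cumulants of $\mathbf{T}_B$ are not scalar. Let $\rho$ be the flip $\mathrm{diag}(\delta_1,\delta_2)\mapsto\mathrm{diag}(\delta_2,\delta_1)$. Then $\mathbf{T}_Bd=\rho(d)\mathbf{T}_B$, and the computation actually gives
\begin{align*}
\kappa^D_{2k}(\mathbf{T}_Bd_1,\dots,\mathbf{T}_Bd_{2k-1},\mathbf{T}_B)=\alpha_k\,\rho(d_1)\rho^2(d_2)\cdots\rho^{2k-1}(d_{2k-1}).
\end{align*}
For example, $\kappa^D_2(\mathbf{T}_Bd,\mathbf{T}_B)=\tau(B^*B)\rho(d)$, not $\tau(B^*B)\mathrm{tr}_2(d)1_D$. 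This is a structural obstruction, not a bookkeeping issue. For $B\neq0$, $\mathbf{T}_B$ is never free from $D$ over $\CC$: $\mathrm{tr}_2\otimes\tau(\mathbf{T}_Bp_1\mathbf{T}_Bp_1)=0$, whereas freeness would force $\frac14\tau(B^*B)$. Since $D$ is trivially $D$-free from any algebra containing $D$, the conclusion you draw from the criterion would make $\mathbf{T}_B$ free from $D$. So the criterion cannot close the argument. Freeness of $\mathbf{T}_A$ and $\mathbf{T}_B$ over $\CC$ is in fact true, but it must be proved differently.

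Either of the following repairs works.

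(a) Keep $D$-freeness, obtained from $R$-diagonality as above, and use the twisted structure directly. By traciality, for every $A$,
\begin{align*}
E_D(\mathbf{T}_Ad_1\mathbf{T}_A\cdots d_{n-1}\mathbf{T}_A)=m_n\,\rho(d_1)\rho^2(d_2)\cdots\rho^{n-1}(d_{n-1}),
\end{align*}
with $m_n$ the $n$-th moment of $\widetilde{\mu}_{|A|}$. Hence the $D$-cumulants of $\mathbf{T}_A$ and $\mathbf{T}_B$ both have the twisted form, with scalar coefficients equal to the free cumulants of $\widetilde{\mu}_{|A|}$ and $\widetilde{\mu}_{|B|}$. $D$-freeness makes these coefficients add. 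Evaluating the $D$-valued moment--cumulant formula at $d_j=1$, all nested twisted cumulants collapse to scalars, giving the moments of $\widetilde{\mu}_{|A|}\boxplus\widetilde{\mu}_{|B|}$.

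(b) Avoid operator-valued freeness entirely. Take a Haar unitary $u$ with $\{u\},\{A\},\{B\}$ $*$-free.
\begin{itemize}
\item Since $uB\stard B$ and $uB$ is $*$-free from $A$, the pairs $(A,B)$ and $(A,uB)$ have the same joint $*$-distribution.
\item Moreover $|A+uB|=|u^*A+B|$, where $u^*A$ is $R$-diagonal, $*$-free from $B$, and $|u^*A|=|A|$.
\item For two $*$-free $R$-diagonal elements, mixed cumulants vanish, so their determining sequences add.
\item The determining sequence equals the even free cumulants of $\widetilde{\mu}_{|\cdot|}$, by the moment--cumulant formula and traciality.
\end{itemize}

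Either version feeds directly into your unbounded approximation.
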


We shall also use the following multiplicative property of $R$-diagonal elements.

\begin{theorem}[{\cite{NS1997rdiag,HL2000Brown,HS2007Brown}}]
\label{thm:rdiag-product}
Let $X_1,X_2\in\widetilde{\MM}$ be $*$-free, and assume that $X_2$ is $R$-diagonal. Then $X_1X_2$ is $R$-diagonal and
\begin{align*}
\mu_{|X_1X_2|^2}=\mu_{|X_1|^2}\boxtimes\mu_{|X_2|^2}.
\end{align*}
\end{theorem}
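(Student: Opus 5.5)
The plan is to reduce to the bounded case, where the cumulant machinery of \cite{NS1997rdiag} is available, and then pass to affiliated operators by truncation. The $R$-diagonality of $X_1X_2$ and the multiplicative formula for $|X_1X_2|^2$ are handled separately.

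\textbf{Normal form.} By definition of $R$-diagonality (in the sense of \cite[Definition 3.3]{HS2007Brown}), $X_2\stard UH$ with $U$ a Haar unitary and $H\ge 0$ $*$-free. Passing to a reduced free product, I may assume that $W^*(X_1)$, $W^*(U)$ and $W^*(H)$ are $*$-free in one tracial $W^*$-probability space. Then $X_1X_2\stard X_1UH$, so it suffices to study $T\coloneqq X_1UH$.

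\textbf{Modulus formula.} First let $X_1$ and $H$ be bounded. Then
\begin{align*}
|T|^2=HU^*|X_1|^2UH,
\end{align*}
and by traciality
\begin{align*}
\tau(|T|^{2n})=\tau\bigl((U^*|X_1|^2U\,H^2)^n\bigr)\quad\text{for all } n.
\end{align*}
Since $U$ is Haar and $*$-free from $\{X_1,H\}$, the element $U^*|X_1|^2U$ has law $\mu_{|X_1|^2}$. The main point is that it is free from $H^2$: conjugating an algebra by a Haar unitary that is $*$-free from it and from $H$ makes the conjugate free from $H$. This is a standard consequence of the free product construction. The moments of $|T|^2$ are therefore those of a product of free positive elements with laws $\mu_{|X_1|^2}$ and $\mu_{|X_2|^2}=\mu_{H^2}$, which gives $\mu_{|T|^2}=\mu_{|X_1|^2}\boxtimes\mu_{|X_2|^2}$.

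\textbf{$R$-diagonality.} I would use the Nica--Speicher characterization in the tracial bounded case: $a$ is $R$-diagonal if and only if $a\stard Wa$, where $W$ is a Haar unitary $*$-free from $a$. Take such a $W$, $*$-free from $\{X_1,U,H\}$. I would then show that $WX_1UH\stard X_1UH$. Write $X_1=V|X_1|$ with $V$ unitary (possible in a finite algebra). Then $WX_1U=(WV)\,|X_1|\,U$, and the $*$-distribution of the triple $\bigl((WV)|X_1|U,\,H\bigr)$ is computed from free products in which $U$ absorbs left multiplication by unitaries $*$-free from it. Concretely, the map $U\mapsto (WV)^{*}{}'U$ where the primed factor is an independent copy does not literally exist, so instead I would compare mixed moments using the freeness of $U$ from $\{W,V,|X_1|,H\}$. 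Here the vanishing of mixed cumulants involving $U$ shows that every $*$-moment of $WX_1UH$ reduces to one in which $W$ enters only through $\tau(W^k)=\delta_{k,0}$. This is the same combinatorial mechanism that proves that $bU$ is $R$-diagonal for a Haar unitary $U$ free from $b$. Alternatively, I would directly invoke the result of \cite{NS1997rdiag} that the product of an $R$-diagonal element with a $*$-free element is $R$-diagonal.

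\textbf{Unbounded case.} I would truncate $X_1$ and $H$ by spectral cutoffs, for example $X_1\,\mathbbm{1}_{[0,n]}(|X_1|)$ and $H\wedge n$. These converge in measure and keep the $*$-freeness. The corresponding products converge in measure to $X_1UH$. $R$-diagonality passes to limits in the sense of \cite{HS2007Brown}, since $*$-distributions of limits in measure are limits of the $*$-distributions. The measures $\mu_{|T_n|^2}$ converge weakly, and $\boxtimes$ on $[0,\infty)$ is weakly continuous, which gives the formula in general.

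The main obstacle is the $R$-diagonality step: verifying the rotation-invariance criterion, or equivalently vanishing of non-alternating cumulants, for $X_1UH$ when $X_1$ is arbitrary and not normal. Secondarily, the limit passage requires a clean statement that $R$-diagonality is closed under convergence in measure.
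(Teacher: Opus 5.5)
The paper gives no proof of this statement; it quotes it from \cite{NS1997rdiag,HL2000Brown,HS2007Brown}, so your proposal has to stand on its own. Your treatment of $\mu_{|X_1X_2|^2}$ is fine, including the truncation. That step only needs two facts: convergence in measure of self-adjoint operators gives weak convergence of their laws, and $\boxtimes$ is weakly continuous on $[0,\infty)$. In fact it does not use $R$-diagonality at all, since $\tau((X_2^*|X_1|^2X_2)^n)=\tau((|X_1|^2X_2X_2^*)^n)$ and $\mu_{X_2X_2^*}=\mu_{X_2^*X_2}$ in a finite algebra.

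The gap is the $R$-diagonality of $X_1X_2$. Your own argument stops there, and the fallback of invoking \cite{NS1997rdiag} just quotes the bounded case of the statement. In the bounded case your ingredients suffice, assembled in a different order:
\begin{itemize}
\item $X_1U$ is $R$-diagonal, being $b$ times a Haar unitary $*$-free from $b$, with $b=X_1$.
\item Hence $WX_1U\stard X_1U$.
\item Both $WX_1U$ and $X_1U$ are $*$-free from $H$, so the pairs $(WX_1U,H)$ and $(X_1U,H)$ have the same joint $*$-distribution.
\item Therefore $WX_1UH\stard X_1UH$ with $W$ $*$-free from $X_1UH$, and your criterion applies.
\end{itemize}
No polar decomposition of $X_1$ is needed.

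The unbounded step is the more serious problem. For affiliated operators, $\stard$ is defined through trace-preserving isomorphisms of the generated von Neumann algebras. There are no moments, and polar parts need not converge under convergence in measure. So ``$*$-distributions of limits in measure are limits of the $*$-distributions'' does not by itself show that $R$-diagonality is closed under such limits. You would need a device such as the bounded transform $T\mapsto T(1+T^*T)^{-1/2}$. It generates $W^*(T)$, it is $R$-diagonal if and only if $T$ is, and it is continuous in measure; the last point is a nontrivial fact about functional calculus.

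It is simpler to avoid limits altogether, using two facts:
\begin{itemize}
\item In a finite algebra every $S\in\widetilde{\MM}$ can be written $S=V|S|$ with $V$ a unitary in $W^*(S)$.
\item If $U$ is Haar and $*$-free from a von Neumann algebra $\mathcal{B}$, and $V\in\mathcal{B}$ is unitary, then $UV$ is again Haar and $*$-free from $\mathcal{B}$.
\end{itemize}
First, $X_2^*$ is $R$-diagonal: $HU^*=U^*(UHU^*)$, and $\mathrm{Ad}\,U$ maps the free pair $(U^*,H)$ to $(U^*,UHU^*)$. So you may realize $(X_1X_2)^*=X_2^*X_1^*\stard U'H'X_1^*$ with $U',H',X_1$ $*$-free. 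Write $H'X_1^*=V|H'X_1^*|$ with $V\in W^*(H',X_1)$ unitary. Then $U'H'X_1^*=(U'V)\,|H'X_1^*|$, which is $R$-diagonal by definition. Taking one more adjoint gives $R$-diagonality of $X_1X_2$ directly in $\widetilde{\MM}$, for bounded and unbounded operators alike.
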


In view of Theorems~\ref{thm:subord_func} and~\ref{thm:symmetrization}, we let $\omega_1^{(\lambda)}$ and $\omega_2^{(\lambda)}$ denote the corresponding subordination functions, i.e., for every $z\in\CC^+$, we have
\begin{align}
F_{\widetilde{\mu}_{|X_1+X_2-\lambda|}}(z)&=F_{\widetilde{\mu}_{|X_1-\lambda|}}(\omega_1^{(\lambda)}(z))=F_{\widetilde{\mu}_{|X_2|}}(\omega_2^{(\lambda)}(z)), \label{eq:rdiag-subordination-F}\\
F_{\widetilde{\mu}_{|X_1+X_2-\lambda|}}(z)+z&=\omega_1^{(\lambda)}(z)+\omega_2^{(\lambda)}(z).
\label{eq:rdiag-subordination-sum}
\end{align}

The following formula expresses the regularized logarithmic potential of $X_1+X_2$ in terms of the subordination functions.

\begin{proposition}[{\cite[Proposition 3.4]{BZ2022}}]
\label{prop:log_symm_subord}
Let $X_1,X_2\in\widetilde{\MM}$ be $*$-free, and assume that $X_2$ is $R$-diagonal.
For every $\lambda\in\CC$ and $y>0$,
\begin{align*}
\frac{1}{2}\tau(\log(|X_1+X_2-\lambda|^2+y^2))
&=\frac{1}{2}\tau(\log(|X_1-\lambda|^2-\omega_1^{(\lambda)}(\iu y)^2))\\
&\quad +\frac{1}{2}\tau(\log(|X_2|^2-\omega_2^{(\lambda)}(\iu y)^2))\\
&\quad -\log(-\iu\omega_1^{(\lambda)}(\iu y)-\iu\omega_2^{(\lambda)}(\iu y)-y).
\end{align*}
\end{proposition}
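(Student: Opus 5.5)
The plan is to derive the formula from the Brown measure identity of the sum (the Proposition of Belinschi--Zhong \cite{BZ2022} just stated above) by combining the spectral theorem with the symmetrization identity \eqref{eq:symm-modulus-cauchy}. Fix $\lambda$, write $A=X_1-\lambda$, $B=X_2$, $T=A+B$, $\mu_1=\widetilde{\mu}_{|A|}$, $\mu_2=\widetilde{\mu}_{|B|}$. By Theorem~\ref{thm:symmetrization}, $\widetilde{\mu}_{|T|}=\mu_1\boxplus\mu_2$. The subordination functions of Theorem~\ref{thm:subord_func} for symmetric laws map $\iu\RR^+$ into $\iu\RR^+$, since symmetry gives $\omega_j(-\bar z)=-\overline{\omega_j(z)}$ and uniqueness applies. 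Hence $\omega_j(\iu y)=\iu v_j$ with $v_j>0$. Consequently $-\omega_j^2=v_j^2>0$, so all three logarithms in the formula are real and well defined (up to integrability issues, addressed at the end).

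The key step is to differentiate both sides in $y$ and compare; both sides tend to $\log y$ plus $o(1)$ as $y\to\infty$, which fixes the constant. For the left side,
\[
\tfrac{d}{dy}\tfrac12\tau(\log(|T|^2+y^2))=y\tau((|T|^2+y^2)^{-1})=-\iu G_{\widetilde{\mu}_{|T|}}(\iu y)
\]
by \eqref{eq:symm-modulus-cauchy}. For the right side, write $\Phi(w_1,w_2,z)=\tfrac12\tau\log(|A|^2-w_1^2)+\tfrac12\tau\log(|B|^2-w_2^2)-\log(-\iu(w_1+w_2-z))$. Its partial derivative in $w_j$ is $G_{\mu_j}(w_j)-1/(w_1+w_2-z)$ (using $\partial_w\tfrac12\log(t^2-w^2)=-w/(t^2-w^2)$ and \eqref{eq:symm-modulus-cauchy} with sign bookkeeping). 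By the subordination relations this vanishes at $w_j=\omega_j(z)$, because $w_1+w_2-z=F_{\widetilde{\mu}_{|T|}}(z)=F_{\mu_j}(\omega_j)$. This is the variational (critical point) structure. Therefore the total $z$-derivative equals $\partial_z\Phi=1/(\omega_1+\omega_2-z)=G_{\widetilde{\mu}_{|T|}}(z)$, and with $z=\iu y$ the chain rule gives $-\iu\cdot(\ldots)\cdot\iu$, matching the left side.

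Next comes the asymptotics. As $y\to\infty$ we have $\omega_j(\iu y)=\iu y(1+o(1))$ and $\omega_1+\omega_2-\iu y=F(\iu y)=\iu y(1+o(1))$. Both sides then equal $\log y+o(1)$ when $|A|,|B|$ are bounded, by dominated convergence on $\tfrac12\log(1+t^2/v^2)$.

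The main obstacle is the unbounded, merely affiliated case. There one must check that $\tau\log(|A|^2+v^2)$ is finite and that the limits hold. I would first prove the identity for bounded operators. Then I would approximate by spectral truncations $A_n$ and $B_n=U h_n(|B|)$, keeping $B_n$ $R$-diagonal and $*$-free. Under weak convergence of the symmetrized laws, the subordination functions converge locally uniformly (by the uniqueness in Theorem~\ref{thm:subord_func} and normal-family arguments). The log terms converge by monotone convergence, with $+\infty$ allowed consistently on both sides.
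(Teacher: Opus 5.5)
The paper does not prove this statement; it is quoted from \cite{BZ2022}. So the question is whether your argument stands on its own, and its core does. Write $\omega_j(\iu y)=\iu v_j$ and $u=v_1+v_2-y$. The subordination relations then say precisely that $v_1\tau((|A|^2+v_1^2)^{-1})=v_2\tau((|B|^2+v_2^2)^{-1})=1/u$. Hence the partial derivatives of the right-hand side in $v_1,v_2$ vanish at the critical point, and its total $y$-derivative is $1/u=y\tau((|T|^2+y^2)^{-1})$, which is the $y$-derivative of the left-hand side. The constant is then fixed by letting $y\to\infty$. This is the same device the paper uses for $C_{t,\lambda}(\ee)$ in the proof of Theorem~\ref{thm:determinantal-regularity}(1).

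Three small corrections:
\begin{enumerate}
\item $y\tau((|T|^2+y^2)^{-1})=\iu G(\iu y)$, not $-\iu G(\iu y)$. With $d/dy=\iu\, d/dz$ you also get $\iu G(\iu y)$ on the right.
\item The computation is cleaner in the real variables $v_1,v_2,y$, where every logarithm has a positive argument, than with complex logarithms of operators.
\item Your opening sentence proposes to derive the formula from the Belinschi--Zhong proposition ``just stated above'', which is the statement itself. Your argument never actually uses it, so drop that framing.
\end{enumerate}

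The genuine gap is the passage to affiliated operators.
\begin{enumerate}
\item Truncating $A$ and $B$ separately does not make $|A_n+B_n|$ monotone in $n$, so monotone convergence says nothing about $\tau(\log(|A_n+B_n|^2+y^2))$.
\item Weak convergence of the symmetrized laws only gives $\tau(\log(|T|^2+y^2))\le\liminf_n\tau(\log(|T_n|^2+y^2))$, because the integrand is unbounded.
\item The points $\omega_j^{(n)}(\iu y)$ move with $n$, so you would also need convergence of subordination functions.
\item When $A$ or $B$ is not log-integrable, the right-hand side is $+\infty$, and nothing in your scheme forces the left-hand side to be $+\infty$; lower semicontinuity points the wrong way.
\end{enumerate}

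The remedy is to drop the approximation, since your computation never used boundedness. For $0<y<Y$, consider the differences $\frac12\tau(\log(|A|^2+v_1(y)^2)-\log(|A|^2+v_1(Y)^2))$, the analogous one for $B$, and $\frac12\tau(\log(|T|^2+y^2)-\log(|T|^2+Y^2))$. They are finite for arbitrary $A,B\in\widetilde{\MM}$, because the integrands are bounded. They may be differentiated under $\tau$, because the $v$-derivative of $\frac12\log(t+v^2)$ is at most $1/v$. So the same computation yields the identity between these differences, with last term $-\log u(y)+\log u(Y)$.

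Now add $\log Y$. Every term becomes $\frac12\tau(g_b(|A|^2))$ with $g_b(t)=\log\frac{t+v_1(y)^2}{1+t/b^2}$ and $b=v_1(Y)$; similarly for $B$, and for $T$ with $v_1(y)$ replaced by $y$ and $b=Y$. What remains is $\log\frac{Y\,u(Y)}{v_1(Y)v_2(Y)}$, which tends to $0$. Each $g_b$ is bounded, nondecreasing in $b$, and increases to $\log(t+v_1(y)^2)$, while $v_j(Y)\to\infty$. Monotone convergence therefore gives the proposition in $(-\infty,+\infty]$ for all $X_1,X_2\in\widetilde{\MM}$, infinite cases included, with no truncation and no continuity argument for subordination functions.
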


\section{Freely infinitely divisible $R$-diagonal elements}
\label{sec:id-rdiagonal}

\subsection{Definitions and characterizations}

We now introduce infinite divisibility for $R$-diagonal elements. This notion was already formulated in \cite{BNNS2018eta} in the bounded, not necessarily tracial setting. Throughout the paper, free infinite divisibility means infinite divisibility with respect to additive free convolution. Since we work with the Fuglede--Kadison determinant and the Brown measure, we restrict attention to tracial $W^*$-probability spaces while allowing possibly unbounded operators.

\begin{definition}
An $R$-diagonal element $X\in\widetilde{\MM}$ is called freely infinitely divisible if, for every $n\ge1$, there exist a tracial $W^*$-probability space $(\NN,\tau_\NN)$ and freely independent identically distributed $R$-diagonal elements $X_1,\ldots,X_n\in\widetilde{\NN}$ such that
\begin{align*}
X\stard X_1+\cdots+X_n.
\end{align*}
\end{definition}

Let $(\MM,\tau)$ be a tracial $W^*$-probability space, and let $X\in\MM$ be a freely infinitely divisible $R$-diagonal element. By \cite[Remark 6.9]{BNNS2018eta}, there exists a unique probability measure $\sigma\in\PP_c^+$ such that
\begin{align*}
\mu_{XX^*}=\mu_{X^*X}=\BBP(\sigma)\boxtimes\Pi_1.
\end{align*}
Here $\PP_c^+$ denotes the set of compactly supported probability measures on $[0,\infty)$, $\BBP$ denotes the Boolean-to-free Bercovici--Pata bijection \cite{BP1999stable}, and $\Pi_1$ is the Marchenko--Pastur distribution with parameter $1$; see \cite[Section 6]{BNNS2018eta} for details. Conversely, for every $\sigma\in\PP_c^+$, there exists a freely infinitely divisible $R$-diagonal element $X$ satisfying the above relation.

The next characterization extends the preceding bounded result to the unbounded setting and is essentially known in a different context; see \cite{AHS2013law}. See also \cite{Bg2010surprising} for a related result.

\begin{theorem}
\label{thm:rid_char}
Let $X$ be an $R$-diagonal element, not necessarily bounded. Then the following are equivalent:
\begin{enumerate}
\item $X$ is freely infinitely divisible.
\item The symmetrized measure $\widetilde{\mu}_{|X|}$ is freely infinitely divisible.
\item There exists a probability measure $\sigma$ on $[0,\infty)$ such that
\begin{align*}
\mu_{XX^*}=\mu_{X^*X}=\BBP(\sigma)\boxtimes\Pi_1,
\end{align*}
where $\Pi_1$ is the Marchenko--Pastur distribution with parameter $1$.
\end{enumerate}
\end{theorem}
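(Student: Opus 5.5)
The plan is to prove (1)$\Rightarrow$(2)$\Rightarrow$(1) using Theorem~\ref{thm:symmetrization}, and then (2)$\Leftrightarrow$(3) by translating between symmetric freely infinitely divisible laws and their squares.

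\textbf{(1)$\Rightarrow$(2).} Suppose $X\stard X_1+\cdots+X_n$ with $X_1,\dots,X_n$ free, identically distributed and $R$-diagonal. Apply Theorem~\ref{thm:symmetrization} iteratively with $\lambda=0$; this is legitimate because a sum of $*$-free $R$-diagonal elements is again $R$-diagonal. It gives
\[
\widetilde{\mu}_{|X|}=\widetilde{\mu}_{|X_1|}^{\boxplus n}.
\]
Hence $\widetilde{\mu}_{|X|}$ has an $n$-th free convolution root for every $n$, i.e.\ it is freely infinitely divisible.

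\textbf{(2)$\Rightarrow$(1).} Suppose $\widetilde{\mu}_{|X|}=\nu_n^{\boxplus n}$. The first step is to show that $\nu_n$ can be taken symmetric. The Voiculescu transform of a symmetric law satisfies $\phi(-\bar z)=-\overline{\phi(z)}$. For a freely infinitely divisible law the $n$-th root is unique, with $\phi_{\nu_n}=\phi/n$ (Proposition~\ref{prop:free_LH}), so $\nu_n$ inherits this symmetry. Next, let $\rho_n$ be the law of the modulus, i.e.\ the push-forward of $\nu_n$ under $t\mapsto|t|$, and choose free $R$-diagonal $X_i=U_iH_i$ with $\mu_{H_i}=\rho_n$, realised in a free product. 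By Theorem~\ref{thm:symmetrization},
\[
\widetilde{\mu}_{|X_1+\cdots+X_n|}=\widetilde{\rho_n}^{\boxplus n}=\nu_n^{\boxplus n}=\widetilde{\mu}_{|X|}.
\]
The sum $X_1+\cdots+X_n$ is $R$-diagonal, and so is $X$. Two $R$-diagonal elements whose moduli have the same law have the same $*$-distribution, because $X\stard UH$ with $H\stard|X|$; this is part of the unbounded theory in \cite{HS2007Brown}. Therefore $X\stard X_1+\cdots+X_n$.

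\textbf{(2)$\Leftrightarrow$(3).} The aim is to show that the map $\sigma\mapsto$ (symmetric law $\nu$ with $\nu^2=\BBP(\sigma)\boxtimes\Pi_1$) is a bijection onto the symmetric freely infinitely divisible laws; I will identify explicit formulas on both sides.

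1. A symmetric freely infinitely divisible $\nu$ has free generating pair $(0,\tau)$ with $\tau$ symmetric. Its Lévy measure $\tau$ corresponds, after pushing forward by $t\mapsto t^2$, to a measure on $[0,\infty)$. Composing with the Boolean-to-free bijection, i.e.\ $\nu=\BBP(\widetilde{\beta})$ for a symmetric Boolean law $\widetilde{\beta}$, gives a candidate $\sigma$ as the square-push-forward of $\widetilde{\beta}$.

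2. I then need the identity that for symmetric $\beta$, the square of $\BBP(\beta)$ equals $\BBP(\beta^2)\boxtimes\Pi_1$, where $\beta^2$ is the push-forward of $\beta$ under squaring. In the bounded case this is \cite[Section 6]{BNNS2018eta}, proved there via cumulants. I would prove it analytically with transforms. Use the relation between the Cauchy transform of a symmetric law and that of its square, $G_\nu(z)=zG_{\nu^2}(z^2)$ (cf.~\eqref{eq:symm-modulus-cauchy}). Express the Boolean self-energy of $\beta$ and of $\beta^2$ in the same way. Then check that $S_{\BBP(\sigma)\boxtimes\Pi_1}(u)=S_{\BBP(\sigma)}(u)/(1+u)$ matches the $S$-transform of $\nu^2$ computed from $R_\nu$ via $S_{\nu^2}(u)=\ldots$ (the known formula linking $R$ of a symmetric law to $S$ of its square). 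This is an identity of analytic functions on a neighbourhood of the relevant domain. It extends to unbounded measures by the analytic definitions, or by weak-continuity approximation: $\BBP$, $\boxtimes$ and squaring are all continuous, and free infinite divisibility is closed under weak limits.

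3. Since $\BBP$ is a bijection onto freely infinitely divisible laws, surjectivity follows in both directions.

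\textbf{Main obstacle.} The hardest part is the unbounded extension of the identity $\nu^2=\BBP(\sigma)\boxtimes\Pi_1$, together with the $S$-transform bookkeeping in step 2 when $\sigma$ has an atom at $0$ or heavy tails. My first attempt will be the approximation route. Truncate $\sigma$ to compactly supported $\sigma_k\to\sigma$ and use the bounded result from \cite{BNNS2018eta}. Then pass to the limit using continuity of $\BBP$, of $\boxtimes$ on $[0,\infty)$, and of symmetrization and squaring under weak convergence.
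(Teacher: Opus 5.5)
\textbf{Verdict: correct, but (2)$\Leftrightarrow$(3) is proved by a different route than the paper's.}

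Your argument for (1)$\Leftrightarrow$(2) is the paper's, written out in more detail. The paper compresses it to ``the $*$-distribution of an $R$-diagonal element is determined by $\widetilde{\mu}_{|X|}$, plus Theorem~\ref{thm:symmetrization}''. Your remark that the $n$-th root of a symmetric freely infinitely divisible law is again symmetric, by uniqueness of roots, supplies a step the paper leaves implicit.

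For (2)$\Leftrightarrow$(3) the two routes differ.
\begin{itemize}
\item \emph{The paper} simply quotes \cite[Theorems 2.2 and 4.2]{AHS2013law}. There the correspondence between symmetric freely infinitely divisible laws, their squares, and $\BBP(\PP^+)\boxtimes\Pi_1$ is established for arbitrary, possibly unbounded, measures. This needs no approximation, but leaves $\sigma$ implicit.
\item \emph{You} pass through the symmetric Boolean preimage $\beta=\BBP^{-1}(\widetilde{\mu}_{|X|})$. You reduce everything to the identity $\BBP(\beta)^2=\BBP(\beta^2)\boxtimes\Pi_1$, proved for compact support and extended using weak continuity of $\BBP$, of $\boxtimes$ on $[0,\infty)$ and of squaring, together with closedness of freely infinitely divisible laws.
\end{itemize}
Your route gives the explicit formula $\sigma=\beta^2$, and hence uniqueness of $\sigma$. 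The price is reliance on those continuity results. Your sentence about the L\'evy measure in step 1 plays no role: it is $\beta$ itself, not the L\'evy measure, that gets squared.

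One point to secure: the bounded input must be the identity with $\sigma=\beta^2$ explicitly, not merely the existence of some $\sigma\in\PP_c^+$ as in \cite[Remark 6.9]{BNNS2018eta}. In (2)$\Rightarrow$(3) it is the explicit form that gives $\sigma_k=\beta_k^2\to\beta^2$. With existence alone you would need a separate tightness argument for $\sigma_k$.

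The explicit identity follows in a few lines from three standard cumulant facts:
\begin{itemize}
\item For symmetric $\nu$, $\kappa_n(\nu^2)=\sum_{\pi\in NC(n)}\prod_{V\in\pi}\kappa_{2|V|}(\nu)$.
\item For symmetric $\beta$, the Boolean cumulants satisfy $b_n(\beta^2)=b_{2n}(\beta)$. This holds because interval partitions of $\{1,\dots,2n\}$ into even blocks correspond to interval partitions of $\{1,\dots,n\}$.
\item $\kappa_n(\rho\boxtimes\Pi_1)$ is the $n$-th moment of $\rho$.
\end{itemize}

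The formula left open in your analytic variant also comes out explicitly. From $F_\beta(z)=F_{\beta^2}(z^2)/z$ one gets $R_{\BBP(\beta)}(w)=wR_{\BBP(\beta^2)}(w^2)$. Setting $u=wR_{\BBP(\beta)}(w)$, a direct computation gives $S_{\BBP(\beta)^2}(u)=w^2/(u(1+u))$ and $S_{\BBP(\beta^2)}(u)=w^2/u$. Hence $S_{\BBP(\beta)^2}(u)=S_{\BBP(\beta^2)}(u)/(1+u)$, as you wanted.
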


\begin{proof}
The equivalence (1)$\iff$(2) follows from the fact that the $*$-distribution of an $R$-diagonal element is determined by $\widetilde{\mu}_{|X|}$ and Theorem \ref{thm:symmetrization}. The equivalence (2)$\iff$(3) follows from \cite[Theorem 2.2 and Theorem 4.2]{AHS2013law}.
\end{proof}

\begin{remark}
The image $\BBP(\PP^+)$, where $\PP^+$ denotes the set of probability measures on $[0,\infty)$, is precisely the class of free regular measures; see \cite[Theorem 4.2]{AHS2013law}. Their properties are studied there in detail.
\end{remark}

We express the radial distribution function of the Brown measure in terms of the Voiculescu transform of $\widetilde{\mu}_{|X|}$.

\begin{proposition}
\label{prop:radial-voiculescu}
Let $X\in\log^+(\MM)$ be a non-zero freely infinitely divisible $R$-diagonal element. Let $(0,\Lambda)$ be the free generating pair of $\widetilde{\mu}_{|X|}$. For $q>0$, set
\begin{align*}
\theta(q)=1+\frac{1}{\iu q}\phi_{\widetilde{\mu}_{|X|}}(\iu q)=1-\int_{\RR}\frac{1+s^2}{q^2+s^2}\di{\Lambda}(s).
\end{align*}
Then there exists an open interval $I\subseteq\RR_{>0}$ such that $\theta \colon I\to({\widetilde{\mu}_{|X|}}(\{0\}),1)$ is an increasing bijection. Moreover, for $q\in I$,
\begin{align}
\mu_X(\{z\in\CC \colon |z|\le r(q)\})=\theta(q),\quad r(q)^2=q^2\theta(q)(1-\theta(q)).
\label{eq:radial-parametrization}
\end{align}
\end{proposition}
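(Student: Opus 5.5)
The plan is to reduce the statement to Theorem~\ref{thm:char_r_supp_spec} by computing the $S$-transform of $\mu\coloneqq\mu_{|X|^2}$ in terms of the Voiculescu transform of $\nu\coloneqq\widetilde{\mu}_{|X|}$, evaluated along the positive imaginary axis. The point is that both transforms can be read off from the same Stieltjes-type integral $\int_0^\infty (p^2+t)^{-1}\di{\mu}(t)$.

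\textbf{Step 0: $\mu$ is not a Dirac measure.} First I check that Theorem~\ref{thm:char_r_supp_spec} applies. If $\mu$ were a Dirac mass $\delta_{c^2}$ with $c>0$, then $\nu=\frac12(\delta_c+\delta_{-c})$. This symmetric Bernoulli law is not freely infinitely divisible, which contradicts Theorem~\ref{thm:rid_char}. The case $c=0$ is excluded because $X\neq 0$.

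\textbf{Step 1: the Cauchy transform of $\nu$ on the imaginary axis.} By \eqref{eq:symm-modulus-cauchy}, for $p>0$ we have
\begin{align*}
G_\nu(\iu p)=-\iu p\int_0^\infty\frac{\di{\mu}(t)}{p^2+t}.
\end{align*}
On the other hand, $\psi_\mu(-1/p^2)=-1+p^2\int_0^\infty (p^2+t)^{-1}\di{\mu}(t)$. Writing $u(p)\coloneqq\psi_\mu(-1/p^2)$, this gives
\begin{align*}
F_\nu(\iu p)=\iu q(p),\qquad q(p)\coloneqq\frac{p}{1+u(p)}.
\end{align*}
Thus $F_\nu$ maps $\iu\RR_{>0}$ into itself.

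\textbf{Step 2: the Voiculescu transform on the imaginary axis.} Since $\nu$ is freely infinitely divisible, Proposition~\ref{prop:free_LH} shows that $\phi_\nu$ is analytic on $\CC^+$. The identity $\phi_\nu(F_\nu(z))=z-F_\nu(z)$ holds for $z$ in a truncated cone, so by analytic continuation it holds on all of $\CC^+$. Evaluating at $z=\iu p$ gives
\begin{align*}
\phi_\nu(\iu q(p))=\iu\bigl(p-q(p)\bigr),\qquad \theta(q(p))=\frac{p}{q(p)}=1+u(p).
\end{align*}
The integral formula for $\theta$ follows from the L\'evy--Hin\v{c}in form in Proposition~\ref{prop:free_LH} with $\gamma_\nu=0$, using the symmetry of $\nu$ (so $\Lambda$ is symmetric). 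From this integral, $\theta$ is continuous and strictly increasing on $(0,\infty)$.

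\textbf{Step 3: the interval $I$ and the radial formula.} The map $p\mapsto u(p)$ is a continuous increasing bijection from $(0,\infty)$ onto $(\mu(\{0\})-1,0)$, and $\mu(\{0\})=\nu(\{0\})$. I define $I\coloneqq q\bigl((0,\infty)\bigr)$. On $I$ we have $p=q\theta(q)$, and this is strictly increasing in $q$ because both factors are positive and increasing. Hence $q\mapsto p$ is a homeomorphism from $I$ onto $(0,\infty)$, and $I$ is an open interval. Composing, $\theta=1+u\circ p$ is an increasing bijection from $I$ onto $(\nu(\{0\}),1)$.

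Finally, with $u=u(p)$ we have $\chi_\mu(u)=-1/p^2$, and therefore
\begin{align*}
S_\mu(u)=-\frac{1+u}{u\,p^2}.
\end{align*}
Theorem~\ref{thm:char_r_supp_spec} gives $\mu_X(\{|z|\le r\})=1+u$ exactly when $r^{-2}=S_\mu(u)$. Every $u$ in the domain corresponds to a radius in the middle range of that theorem, so this case covers all $u$. Substituting $p=q\theta$ and $1+u=\theta$ yields
\begin{align*}
r^2=\frac{p^2(1-\theta)}{\theta}=q^2\theta(1-\theta),
\end{align*}
which is \eqref{eq:radial-parametrization}.

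The main obstacle is Step 2: justifying that the inversion identity for $F_\nu$ persists from a truncated cone to the whole imaginary axis. It holds because $F_\nu$ is injective on the relevant region, namely the region $\Omega_H$ of Theorem~\ref{thm:global_inversion}, applied with $X_1=0$ so that $H$ is the analytic continuation of $F_\nu^{\langle-1\rangle}$. Alternatively, it follows from the uniqueness of analytic continuation of $z\mapsto\phi_\nu(F_\nu(z))+F_\nu(z)-z$ on $\CC^+$. The other steps are direct substitutions.
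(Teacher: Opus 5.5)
Your proof is correct. It ends with the same $S$-transform computation and the same appeal to Theorem~\ref{thm:char_r_supp_spec} as the paper, but it obtains the key identity $F_\nu(\iu q\theta(q))=\iu q$ and the interval $I$ by a genuinely different, more elementary route.

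\textbf{The paper's route.} It parametrizes by $q$. It sets $I=\{q>0\colon\theta(q)>0\}$ and observes that then $\iu q\in\Omega_H$ for $H(z)=z+\phi_{\widetilde{\mu}_{|X|}}(z)$. It then invokes Theorem~\ref{thm:global_inversion} with $X_1=0$ to get the harder direction $F_{\widetilde{\mu}_{|X|}}(H(\iu q))=\iu q$, which amounts to knowing that $F_\nu(\CC^+)$ fills $\Omega_H$. The lower endpoint of the range of $\theta$ is then found by a separate limit argument using $\widetilde{\mu}_{|X|}(\{0\})=\lim_{y\to0}\iu y\,G_{\widetilde{\mu}_{|X|}}(\iu y)$.

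\textbf{Your route.} You parametrize instead by the point $\iu p$ at which $F_\nu$ is evaluated. This means you only need the easy direction $H(F_\nu(z))=z$ on $\CC^+$, which follows from the identity theorem once $\phi_\nu$ is analytic on $\CC^+$. You define $I$ as the image $q((0,\infty))$. Injectivity of $p\mapsto q(p)$ then comes for free from $p=q\theta(q)$, and the range $(\nu(\{0\}),1)$ is read off directly from the range of $\psi_\mu$, so no boundary limit is needed. You also verify that $\mu_{|X|^2}$ is not a Dirac mass, a hypothesis of Theorem~\ref{thm:char_r_supp_spec} that the paper leaves implicit.

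\textbf{What each route buys.} The paper's route gives the intrinsic description $I=\{q>0\colon\theta(q)>0\}=(q_0,\infty)$ immediately. Your $I$ is the same set, but this takes one more line:
\begin{itemize}
\item $I$ is unbounded above because $q(p)>p$.
\item If its infimum $q_0'$ is positive, then $\theta(q(p))=p/q(p)\to0$ as $p\to0$. Hence $\theta(q_0')=0$, and by strict monotonicity $\theta\le0$ on $(0,q_0']$.
\end{itemize}

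\textbf{A phrasing issue.} In your final paragraph, the relevant fact is not that $F_\nu$ is injective on $\Omega_H$. It is the identity $H\circ F_\nu=\mathrm{id}$ on $\CC^+$, which is what Theorem~\ref{thm:global_inversion} with $X_1=0$ gives, since then $\omega_1=F_\nu$. Your identity-theorem justification is the cleaner one and suffices on its own.
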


\begin{proof}
The existence of the interval $I$ can also be inferred from \cite[Example 3.4]{HW2022Reg}. We give a more direct argument here. Put $H(z)=z+\phi_{\widetilde{\mu}_{|X|}}(z)$ for $z\in\CC^+$. Then $H(\iu q)=\iu q+\phi_{\widetilde{\mu}_{|X|}}(\iu q)=\iu q\theta(q)$ for $q>0$. By dominated convergence, $\theta(q)\to1$ as $q\to\infty$. Moreover, differentiating gives
\begin{align*}
\theta^\prime(q)=2q\int_{\RR}\frac{1+s^2}{(q^2+s^2)^2}\di{\Lambda}(s)>0,\quad q>0.
\end{align*}
Thus $\theta$ is strictly increasing. Therefore
$I\coloneqq\{q>0 \colon \theta(q)>0\}$ is a nonempty open interval of the form $(q_0,\infty)$ for some $q_0\ge0$. We identify the lower endpoint of the range of $\theta$. If $q\in I$, then $\Im H(\iu q)=q\theta(q)>0$, so $\iu q\in\Omega_H$. Hence Theorem~\ref{thm:global_inversion} gives
\begin{align*}
F_{\widetilde{\mu}_{|X|}}(\iu q\theta(q))=F_{\widetilde{\mu}_{|X|}}(H(\iu q))=\iu q.
\end{align*}
Thus $\iu q\theta(q)G_{\widetilde{\mu}_{|X|}}(\iu q\theta(q))=\theta(q)$.  Observe that $q\theta(q)\to0$ as $q\to q_0$. Using the identity
$\widetilde{\mu}_{|X|}(\{0\})=\lim_{y\to0}\iu yG_{\widetilde{\mu}_{|X|}}(\iu y)$, we obtain
\begin{align*}
\lim_{q\to q_0}\theta(q)=\widetilde{\mu}_{|X|}(\{0\}).
\end{align*}
Since $\theta(q)\to1$ as $q\to\infty$ and $\theta$ is strictly increasing, it follows that $\theta\colon I\to(\widetilde{\mu}_{|X|}(\{0\}),1)$ is an increasing bijection.

It remains to identify the radial distribution. For $q\in I$, the identity $F_{\widetilde{\mu}_{|X|}}(\iu q\theta(q))=\iu q$ and the relation $G_{\widetilde{\mu}_{|X|}}(z)=zG_{\mu_{|X|^2}}(z^2)$ give
\begin{align*}
\iu q=F_{\widetilde{\mu}_{|X|}}(\iu q\theta(q))=\frac{\iu q\theta(q)}{1+\psi_{\mu_{|X|^2}}(-q^{-2}\theta(q)^{-2})}.
\end{align*}
Hence $\psi_{\mu_{|X|^2}}(-q^{-2}\theta(q)^{-2})=\theta(q)-1$, and so
$\chi_{\mu_{|X|^2}}(\theta(q)-1)=-q^{-2}\theta(q)^{-2}$. By the definition of the $S$-transform,
\begin{align*}
S_{\mu_{|X|^2}}(\theta(q)-1)=\frac{\theta(q)}{\theta(q)-1}\chi_{\mu_{|X|^2}}(\theta(q)-1)=\frac{1}{q^2\theta(q)(1-\theta(q))}.
\end{align*}
Thus, if $r(q)^2=q^2\theta(q)(1-\theta(q))$, then $S_{\mu_{|X|^2}}(\theta(q)-1)=r(q)^{-2}$. Applying Theorem~\ref{thm:char_r_supp_spec}, we get
\begin{align*}
\mu_X(\{z\in\CC \colon |z|\le r(q)\})=1+S_{\mu_{|X|^2}}^{\langle-1\rangle}(r(q)^{-2})=\theta(q).
\end{align*}
This proves the assertion.
\end{proof}

\subsection{Examples and basic constructions}

Theorem~\ref{thm:rid_char} provides a convenient way to produce examples and to prove stability under basic operations. We begin with a family containing the circular and circular Cauchy elements.

\begin{example}
\label{ex:x_mk}
Let $c_1,c_2,\ldots$ be a free circular family, and for $m\ge1$ and $k\ge0$ define
\begin{align*}
X_{m,k}\coloneqq\left(\prod_{i=1}^m c_i\right)\left(\prod_{j=1}^k c_{m+j}\right)^{-1}.
\end{align*}
By the multiplicative property of $R$-diagonal elements in Theorem~\ref{thm:rdiag-product}, the order of the factors does not affect the $*$-distribution. Thus any product obtained by permuting the $m$ circular elements and the $k$ inverse circular elements has the same $*$-distribution as $X_{m,k}$. The case $(m,k)=(1,0)$ gives a circular element, while $(m,k)=(1,1)$ gives the circular Cauchy element studied in \cite{HS2009inv}.

Using the identities (see, e.g., \cite[Section 5]{HM2013Law})
\begin{align*}
S_{\mu_{|c|^2}}(u)=S_{\Pi_1}(u)=\frac{1}{1+u},\quad S_{\mu_{|c^{-1}|^2}}(u)=-u,
\end{align*}
and the multiplicativity of the $S$-transform with respect to the free multiplicative convolution, together with Theorem \ref{thm:rdiag-product}, we obtain
\begin{align}
S_{\mu_{|X_{m,k}|^2}}(u)=\left(\frac{1}{1+u}\right)^m(-u)^k=\frac{(-u)^k}{(1+u)^m}.
\label{eq:S-Xmk}
\end{align}
On the other hand, recalling that $S_{f_{1/(k+1),1}}(u)=(-u)^k$ for the positive free stable law $f_{1/(k+1),1}$ of index $1/(k+1)$ (see, e.g., \cite[Proposition 3.5]{AH2016classical}), with the
convention $f_{1,1}=\delta_1$ when $k=0$, we deduce that
\begin{align*}
\mu_{|X_{m,k}|^2}=f_{1/(k+1),1}\boxtimes\Pi_1^{\boxtimes m}.
\end{align*}
By \cite[Remark 4.1(2), Theorem 4.2, Example 7.3(1)]{AHS2013law}, there exists $\sigma\in\PP^+$ such that $\BBP(\sigma)=f_{1/(k+1),1}$. Using the multiplicativity $\BBP(\sigma_1\boxtimes\sigma_2)=\BBP(\sigma_1)\boxtimes\BBP(\sigma_2)$ for $\sigma_1,\sigma_2\in\PP^+$ (see \cite{BN2008remarkable} for a proof) and $\BBP((\delta_0+\delta_2)/2)=\Pi_1$, it follows that $\mu_{|X_{m,k}|^2}=\BBP(\sigma^\prime)\boxtimes\Pi_1$ for some probability measure $\sigma^\prime \in\PP^+$. Therefore, by Theorem \ref{thm:rid_char}, we conclude that $X_{m,k}$ is $R$-diagonal and freely infinitely divisible for $m\ge1$ and $k\ge0$.

This family is also treated in \cite{KT2018limiting,COR2024fractional} with the notion of $\oplus$-stability. In particular, as stated in \cite[Remark~3.2]{KT2018limiting}, the Brown measure of $X_{m,k}$ is $\oplus$-stable if and only if $m=1$. In this case, the symmetrized law $\widetilde{\mu}_{|X_{1,k}|}$ is the symmetric free stable law of index $\frac{2}{k+1}$, and the corresponding $R$-transform satisfies $R_{\widetilde{\mu}_{|X_{1,k}|}}(\iu y)=\iu y^{(1-k)/(k+1)}$ for $y>0$.

\end{example}

\begin{remark}
If $a\in\CC\setminus\{0\}$ and $u$ is a Haar unitary, then $au$ is not freely infinitely divisible, since $\widetilde{\mu}_{|au|}=\frac{1}{2}\delta_{|a|}+\frac{1}{2}\delta_{-|a|}$ is not freely infinitely divisible.
\end{remark}

The next proposition shows that products, commutators, and anti-commutators of freely infinitely divisible $R$-diagonal elements are again freely infinitely divisible.

\begin{proposition}
\label{prop:R-diag-commutators}
Let $x$ and $y$ be $*$-free $R$-diagonal elements that are freely infinitely divisible, not necessarily bounded. Then $xy$ and $z_\pm\coloneqq xy\pm yx$ are again $R$-diagonal and freely infinitely divisible.
\end{proposition}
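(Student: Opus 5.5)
The plan is to work entirely through characterization (3) of Theorem~\ref{thm:rid_char}: an $R$-diagonal element $X$ is freely infinitely divisible if and only if $\mu_{X^*X}=\BBP(\sigma)\boxtimes\Pi_1$ for some probability measure $\sigma$ on $[0,\infty)$.

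\textbf{The product $xy$.} By Theorem~\ref{thm:rdiag-product}, $xy$ is $R$-diagonal and
\begin{align*}
\mu_{|xy|^2}=\mu_{|x|^2}\boxtimes\mu_{|y|^2}=\BBP(\sigma_1)\boxtimes\Pi_1\boxtimes\BBP(\sigma_2)\boxtimes\Pi_1,
\end{align*}
where $\sigma_1,\sigma_2$ come from Theorem~\ref{thm:rid_char} applied to $x$ and $y$. Since $\boxtimes$ is commutative on $\PP^+$, we may regroup. As in Example~\ref{ex:x_mk}, use $\Pi_1=\BBP((\delta_0+\delta_2)/2)$ together with the multiplicativity $\BBP(\sigma_1\boxtimes\sigma_2)=\BBP(\sigma_1)\boxtimes\BBP(\sigma_2)$ of \cite{BN2008remarkable}. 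This gives
\begin{align*}
\mu_{|xy|^2}=\BBP(\sigma_1\boxtimes\sigma_2\boxtimes\tfrac12(\delta_0+\delta_2))\boxtimes\Pi_1,
\end{align*}
which is of the form required in (3). Theorem~\ref{thm:rid_char} then gives the conclusion. If the cited multiplicativity is only available for compactly supported measures, I would extend it by approximation: truncate $\sigma_i$ and use weak continuity of $\BBP$ and of $\boxtimes$ on $\PP^+$.

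\textbf{The elements $z_\pm$.} The key step is to show that $z_\pm$ has the same $*$-distribution as a product $x'y'$ of $*$-free copies. The natural tool is that $R$-diagonality is stable under multiplication by a $*$-free Haar unitary. Realize $x=u_1h_1$ and $y=u_2h_2$, with $u_1,h_1,u_2,h_2$ all $*$-free. Then
\begin{align*}
xy\pm yx=u_1h_1u_2h_2\pm u_2h_2u_1h_1 .
\end{align*}
Instead of manipulating this directly, I would invoke the known result of Haagerup--Larsen/Nica--Speicher on commutators of free $R$-diagonal elements. For $*$-free $R$-diagonal $x,y$, the elements $xy+yx$ and $xy-yx$ are $R$-diagonal, and
\begin{align*}
\mu_{|xy\pm yx|^2}=\mu_{|x|^2}\boxtimes\mu_{|y|^2}\boxtimes\mu_{|h|^2},
\end{align*}
where $h=u+u^*$, or equivalently $h=\sqrt2\,s$-type, depending on the formulation. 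Concretely, \cite{NS1997rdiag} shows that $xy\pm yx$ has the same $*$-distribution as $\sqrt2\,xy'$-like products up to a symmetric factor. The precise form I would use is
\begin{align*}
\widetilde\mu_{|xy\pm yx|}=\widetilde\mu_{|xy|}\boxplus\widetilde\mu_{|yx'|},
\end{align*}
where $xy$ and $x'y''$ are $*$-free $R$-diagonal copies. Heuristically, $xy$ and $yx$ become "free enough" after conjugating by Haar unitaries.

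Granting such an additive description, the conclusion follows quickly. $\widetilde\mu_{|z_\pm|}$ is the free convolution of two freely infinitely divisible symmetric laws, by the product case and (2). It is therefore freely infinitely divisible, and (2)$\Rightarrow$(1) finishes the proof.

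\textbf{Main obstacle.} The hardest step is establishing the $*$-distributional identity for $xy\pm yx$ in the unbounded setting. First I would treat bounded $x,y$ using cumulants: the $R$-diagonal cumulants of $xy\pm yx$ should equal twice the alternating cumulants of $xy$, by the Nica--Speicher computation for commutators. This gives $\widetilde\mu_{|z_\pm|}=\widetilde\mu_{|xy|}^{\boxplus 2}$ after suitable scaling, which is infinitely divisible when $\widetilde\mu_{|xy|}$ is. For the unbounded case I would then pass to the limit through spectral truncations $x_n=u_1f_n(h_1)$ and $y_n=u_2f_n(h_2)$. This requires convergence in measure of $z_\pm^{(n)}$ and closedness of the infinitely divisible class under weak limits. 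The weak-limit closedness is standard, but infinite divisibility of the truncated pieces is not preserved. So I would instead pass to the limit in the identity $\widetilde\mu_{|z_\pm|}=\widetilde\mu_{|xy|}^{\boxplus2}$ itself, using continuity of $\boxplus$ and of the map $T\mapsto\widetilde\mu_{|T|}$ under convergence in measure.
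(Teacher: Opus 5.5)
\textbf{Verdict: there is a genuine gap in the $z_\pm$ part.}

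Your argument for $xy$ matches the paper's: Theorem~\ref{thm:rdiag-product}, multiplicativity of $\BBP$, $\Pi_1=\BBP((\delta_0+\delta_2)/2)$, then Theorem~\ref{thm:rid_char}. That part is fine.

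The gap is the half of the statement concerning $z_\pm$. You never show that $z_\pm$ is $R$-diagonal, which Theorem~\ref{thm:rid_char} requires before (2)$\Rightarrow$(1) can be used. You also never show that $\widetilde\mu_{|z_\pm|}=\widetilde\mu_{|xy|}^{\boxplus 2}$. Instead you grant it, attributing it to a ``known'' commutator result whose form you cannot pin down, and your cumulant computation for the bounded case is only announced.

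The candidate forms you list are not equivalent, and the multiplicative one is false. Take circular $x,y$. A direct count of non-crossing pairings gives $\tau(|xy+yx|^2)=2$ and $\tau(|xy+yx|^4)=10$. By contrast, any law $\Pi_1^{\boxtimes 2}\boxtimes\nu$ with first moment $2$ has second moment $2m_1(\nu)^2+m_2(\nu)=8+m_2(\nu)\ge 12$. This also rules out ``$\sqrt2\,xy'$-like products''. So the proposal, as written, does not prove the $z_\pm$ assertion.

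\textbf{The missing idea} is the Haar-unitary trick the paper takes from \cite[Section~5.3]{COR2024fractional}. Let $u$ be a Haar unitary $*$-free from $\{x,y\}$.
\begin{enumerate}
\item Because $x$ and $y$ are $R$-diagonal, the pair $(ux,yu^*)$ has the same joint $*$-distribution as $(x,y)$. To see this, realize $x\stard v_1h_1$ and $y\stard h_2v_2$ with all factors $*$-free. Then $uv_1$ and $v_2u^*$ are again $*$-free Haar unitaries, $*$-free from $\{h_1,h_2\}$. Hence
\[
z_\pm\stard (ux)(yu^*)\pm(yu^*)(ux)=uxyu^*\pm yx .
\]
\item Since $u$ is $*$-free from $W^*(x,y)$, the elements $uxyu^*$ and $yx$ are $*$-free.
\item Both are $R$-diagonal with the $*$-distribution of $xy$, by Theorem~\ref{thm:rdiag-product} and commutativity of $\boxtimes$. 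The sign is irrelevant by rotation invariance.
\item Thus $z_\pm$ is a sum of two $*$-free $R$-diagonal elements, hence $R$-diagonal.
\item Theorem~\ref{thm:symmetrization} with $\lambda=0$ then gives $\widetilde\mu_{|z_\pm|}=\widetilde\mu_{|xy|}^{\boxplus2}$, which is freely infinitely divisible by the product case.
\end{enumerate}
This works directly for affiliated operators, so neither the bounded cumulant computation nor your truncation and limiting argument is needed.
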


\begin{proof}
By Theorem \ref{thm:rid_char}, there exist probability measures $\sigma_x$ and $\sigma_y$ on $\RR_{\ge0}$ such that
\begin{align*}
\mu_{|x|^2}=\BBP(\sigma_x)\boxtimes\Pi_1,\quad \mu_{|y|^2}=\BBP(\sigma_y)\boxtimes\Pi_1.
\end{align*}
Therefore, we have
\begin{align*}
\mu_{|xy|^2}=\mu_{|x|^2}\boxtimes\mu_{|y|^2}=\BBP(\sigma_x)\boxtimes\Pi_1\boxtimes\BBP(\sigma_y)\boxtimes\Pi_1=\BBP\left(\sigma_x\boxtimes\sigma_y\boxtimes\frac{1}{2}(\delta_0+\delta_2)\right)\boxtimes\Pi_1,
\end{align*}
where the last equality follows from the multiplicativity of $\BBP$ and $\BBP((\delta_0+\delta_2)/2)=\Pi_1$.
The rest of the argument is similar to \cite[Section 5.3]{COR2024fractional}. Let $u$ be a Haar unitary that is $*$-free from $\{x,y\}$. Since $x$ and $y$ are $R$-diagonal, the pair $(ux,yu^*)$ has the same joint $*$-distribution as $(x,y)$, and therefore
\begin{align*}
z_\pm=xy\pm yx\stard(ux)(yu^*)\pm(yu^*)(ux)=uxyu^*\pm yx.
\end{align*}
Moreover, $uxyu^*$ and $yx$ are $*$-free and both are $R$-diagonal. Hence $z_\pm$ is $R$-diagonal. In particular,
\begin{align*}
\widetilde{\mu}_{|z_\pm|}=\widetilde{\mu}_{|uxyu^*|}\boxplus\widetilde{\mu}_{|yx|}=\widetilde{\mu}_{|xy|}^{\boxplus2}.
\end{align*}
Thus $\widetilde{\mu}_{|z_\pm|}$ is freely infinitely divisible, and Theorem \ref{thm:rid_char} implies that $z_\pm$ is $R$-diagonal and freely infinitely divisible.
\end{proof}

As a simple consequence, for freely independent circular elements $c_1$ and $c_2$, $c_1c_2\pm c_2c_1$ are $R$-diagonal and freely infinitely divisible.

In the next section, we show that this phenomenon persists for homogeneous
noncommutative polynomials in the bounded setting.

\section{Homogeneous polynomials of bounded freely infinitely divisible $R$-diagonal elements}
\label{sec:homogeneous-polynomials}

In the self-adjoint setting, several results for free infinite divisibility are known. For instance, the commutator of freely infinitely divisible random variables is freely infinitely divisible \cite{AHS2013law}, and certain quadratic forms in free random variables preserve free infinite divisibility \cite{LehnerEjsmont2017,LehnerEjsmont2021}. Furthermore, the infinite divisibility of powers of free random variables was studied in \cite{Hasebe2016}; in particular, every power of a semicircular element is freely infinitely divisible. See also \cite{lehnerkamil2024integral,popakamil2025} for more recent work on free infinite divisibility of polynomials of free random variables.

The main result of this section is the following theorem.

\begin{theorem}\label{thm:prod}
Let $(\mathcal M,\tau)$ be a tracial $W^*$-probability space, and let $X_1,\dots,X_N\in\mathcal M$ be a $*$-free family of bounded freely infinitely divisible $R$-diagonal elements. Fix $m\geq1$. For $\iota=(\iota(1),\dots,\iota(m))\in\{1,\dots,N\}^m$, set $X^\iota=X_{\iota(1)}X_{\iota(2)}\cdots X_{\iota(m)}$.
Then, for every function $\alpha \colon \{1,\dots,N\}^m\to\CC$, the element
\begin{align*}
T=\sum_{\iota\in\{1,\dots,N\}^m}\alpha(\iota)X^\iota
\end{align*}
is also $R$-diagonal and freely infinitely divisible.
\end{theorem}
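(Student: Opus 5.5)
The plan has two separate parts: first show that $T$ is $R$-diagonal, then show that it is freely infinitely divisible. Throughout we use boundedness, so that everything can be phrased in terms of $*$-cumulants.

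\emph{$R$-diagonality.} Since the $X_i$ are $*$-free and each is $R$-diagonal, the mixed $*$-cumulants of the family $\{X_1,\dots,X_N\}$ have two properties. They vanish unless all arguments come from a single $X_i$, and within a single $X_i$ they vanish unless the arguments alternate between $X_i$ and $X_i^*$. I would compute $\kappa_n(T^{\epsilon_1},\dots,T^{\epsilon_n})$, with $\epsilon_j\in\{1,*\}$, by multilinearity in the monomials $X^\iota$. Each resulting cumulant with product entries would then be expanded using the Krawczyk--Speicher formula for cumulants with products as arguments. This writes it as a sum over partitions $\pi$ of $[nm]$ with $\pi\vee\hat 0_{m}=1$ of products of cumulants of the $X_i^{\pm}$. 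Homogeneity of degree $m$ is what makes the parity argument work. Each block of such a $\pi$ must alternate between a letter $X_i$ and a letter $X_i^*$, and in $T$ all $X$-letters sit in the $T$-slots while all $X^*$-letters sit in the $T^*$-slots, each slot carrying exactly $m$ of them. A counting and planarity argument along the connected partition should then force the sequence $\epsilon_1,\dots,\epsilon_n$ to alternate and $n$ to be even; this is the same argument used for $X_1X_2$ in Theorem~\ref{thm:rdiag-product}. A shortcut I would try first is to exploit the invariance $(X_i)_i\stard(\zeta X_i)_i$ for $|\zeta|=1$, together with the $R$-diagonal tuple structure, to realize $T\stard uT'$ with $u$ Haar and $*$-free from $T'$.

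\emph{Free infinite divisibility.} Let $\widetilde\mu$ denote $\widetilde{\mu}_{|T|}$. By Theorem~\ref{thm:rid_char} it suffices to show that $\widetilde\mu$ is freely infinitely divisible. Its free cumulants are $\kappa_{2n}(\widetilde\mu)=\alpha_n(T)$, the determining sequence of $T$. For a compactly supported symmetric law, free infinite divisibility is equivalent to the existence of a finite positive measure $\rho$ with $\alpha_{n+1}=\int t^{n}\di{\rho}(t)$ for $n\ge0$. In other words, the shifted determining sequence must be a positive semidefinite Hankel sequence. For each $X_i$ this holds, say with measure $\rho_i$; this is the cumulant form of (3) in Theorem~\ref{thm:rid_char}.

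I would realize this positivity concretely through a Poisson-type limit. For each $k$, pick $*$-free identically distributed $R$-diagonal elements $X_{i,1},\dots,X_{i,k}$ with $X_i\stard\sum_{j}X_{i,j}$, all families $*$-free. Then $T$ has the same $*$-distribution as $\sum_\iota\alpha(\iota)\prod_{\ell=1}^m\sum_{j}X_{\iota(\ell),j}$. The next step is to show that, as $k\to\infty$, only the terms with all indices $j$ equal survive at the level of cumulants. The mixed-index terms contribute $O(k^{-1})$ per block after normalisation, because each $X_{i,j}$ has cumulants of order $1/k$. The upshot is that $\alpha_n(T)=\lim_k k\,\alpha_n(T_{1,k})$, where $T_{1,k}$ is the single-index polynomial. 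Since $T_{1,k}$ is $R$-diagonal by the first part, it would remain to show that $k\,\alpha_{n+1}(T_{1,k})$ is, up to $o(1)$, the $n$-th moment of a positive measure, namely $k\,\mu_{|T_{1,k}|^2}$ restricted away from $0$. This is the usual way compound free Poisson laws arise. Positive semidefiniteness is preserved under limits, which would conclude the proof.

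\emph{Main obstacle.} I expect the hardest step to be this asymptotic identification. One must show rigorously that the cross terms between different indices $j$ are negligible at order $1/k$. One must also show that the leading cumulants of $T_{1,k}$ agree with its moments to first order in $1/k$, since for small elements the cumulants are dominated by the non-crossing pairings with one block. Both facts need uniform bounds on $\|X_{i,j}\|$ and on the cumulants. Here I would use that the free Lévy measure of $\widetilde{\mu}_{|X_i|}$ has compact support in the bounded case. If this limiting argument becomes unwieldy, the fallback is the direct approach: prove positive semidefiniteness of $(\alpha_{n+1}(T))_n$ from the explicit product-cumulant formula, using the Hankel positivity of each $(\alpha_{n+1}(X_i))_n$.
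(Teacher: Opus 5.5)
Your $R$-diagonality part follows the paper's route: multilinear expansion, the formula for cumulants with products as entries, and a planarity argument. The paper makes that last step precise by showing that in every contributing $\pi$ the successor of $tm$ in its block is $tm+1$.

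The gap is in the free infinite divisibility part. Your key claim $\alpha_n(T)=\lim_k k\,\alpha_n(T_{1,k})$ is false for every $m\ge2$. Expand the determining cumulant of $T$ over $\pi\in NC(2nm)$ with $\pi\vee\widehat{0}_{2n}=\mathbbm{1}_{2nm}$. Each block of $\pi$ does carry a factor $1/k$, but it also carries a free choice of the copy index $j\in\{1,\dots,k\}$. So summing over all $j$-labelings compatible with $\pi$ returns exactly $\kappa_\pi$, with no power of $k$ left. This must happen, since $T$ does not depend on $k$.

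The all-equal labelings contribute only $k\cdot k^{-|\pi|}\kappa_\pi$. For $m\ge2$, every contributing $\pi$ has at least two blocks. The one-block partition is never alternating, because the sign $\theta$ is constant on each group of $m$ consecutive letters. Hence
$k\,\alpha_n(T_{1,k})=\sum_{|\pi|\ge2}k^{1-|\pi|}\kappa_\pi\to0$, so the diagonal terms are precisely the negligible ones.

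A concrete counterexample: let $c$ be circular with $\tau(cc^*)=1$ and $T=c^2$. Then $\alpha_1(T)=\tau(c^2c^{*2})=1$. But $T_{1,k}=c_1^2$ with $\tau(c_1c_1^*)=1/k$, so $k\,\alpha_1(T_{1,k})=1/k$.

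With this step gone, only your fallback remains, and it states the goal without a mechanism. The goal is Hankel positivity of $(\alpha_{n+1}(T))_{n\ge0}$. This positivity is not visible termwise in the product-cumulant formula. The expansion mixes complex coefficients $\alpha(\iota)\overline{\alpha(\iota')}$ with products of cumulants of different $X_i$, taken over a restricted class of noncrossing partitions. You need a structural representation of the whole restricted sum as a positive functional.

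The paper supplies this in two steps:
\begin{itemize}
\item It first reduces, by approximation, to the case where each determining sequence is $\bigl(\int t^{2r}\di{\sigma_k}(t)\bigr)_{r\ge1}$ for a compactly supported symmetric measure $\sigma_k$.
\item It then builds a full Fock space over $\bigoplus_k\bigl(L^2(\sigma_k)\otimes\CC e\oplus L^2(\sigma_k)\otimes\CC f\bigr)$. On it there are multiplication operators $M_k$ and creation/annihilation operators $A_k^*,D_k^*,A_k,D_k$.
\end{itemize}
Lemmas~\ref{lemma:prod:1} and~\ref{lemma:prod:2} show that only the operator words coming from partitions connecting $tm$ and $tm+1$ survive. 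This gives $\kappa_{2n}(T,T^*,\dots,T,T^*)=\langle (VV^*)^n\eta,\eta\rangle$ for the bounded operator $V=\sum_\iota\alpha(\iota)\Psi(\iota,1)$. The determining sequence of $T$ is therefore the moment sequence of the spectral measure of $VV^*$ at $\eta$, which is exactly the positivity you need.
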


We first introduce a Fock space model and some auxiliary notation used in the proof of Theorem~\ref{thm:prod}. For the proof of free infinite divisibility, by a standard approximation argument using symmetric compound free Poisson laws, it is enough to treat first the case where each symmetrization \(\widetilde{\mu}_{|X_k|}\) is a symmetric compound free Poisson distribution. In other words, we can assume that $X_1,\dots,X_N$ are a $*$-free family of $R$-diagonal elements whose determining sequences are represented by compactly supported symmetric finite measures $\sigma_1,\dots,\sigma_N$ on $\RR$, in the sense that
\begin{align}
\kappa_r[h_k]=\kappa_r[X_k^{\varepsilon(1)},\dots,X_k^{\varepsilon(r)}]=\int_{\RR}t^r\di{\sigma_k}(t)
\label{eq:compound-model}
\end{align}
where the sequence $\varepsilon(1),\dots,\varepsilon(r)$ is cyclically alternating and $h_k$ is a self-adjoint operator whose distribution is $\widetilde{\mu}_{|X_k|}$. Let $\{e,f\}$ be an orthonormal basis, and for $1\leq k\leq N$, set
\begin{align*}
H_k^e=L^2(\sigma_k)\otimes\CC e,\quad H_k^f=L^2(\sigma_k)\otimes\CC f,
\end{align*}
and
\begin{align*}
\mathcal H=\bigoplus_{k=1}^N(H_k^e\oplus H_k^f),\quad \widetilde{\mathcal H}=\bigoplus_{n=1}^{\infty}\mathcal H^{\otimes n}.
\end{align*}
Note that $\widetilde{\mathcal H}$ is the full Fock space without the vacuum state. Let $\mathrm{id}_k$ denote the function $t\mapsto t$ in $L^2(\sigma_k)$. With this notation, define the operator $m_k\colon H_k^e\to H_k^f$ by
\begin{align*}
m_k(F\otimes e)=(\mathrm{id}_k \cdot F)\otimes f,
\end{align*}
and the operators $M_k,A_k^*,D_k^*$ on $\widetilde{\mathcal H}$ by linear extension of
\begin{align*}
M_k(v_1\otimes\cdots\otimes v_n)&=m_k(P_{H_k^e}v_1)\otimes v_2\otimes\cdots\otimes v_n,\\
A_k^*(v_1\otimes\cdots\otimes v_n)&=(\mathrm{id}_k\otimes f)\otimes v_1\otimes\cdots\otimes v_n,\\
D_k^*(v_1\otimes\cdots\otimes v_n)&=(\mathrm{id}_k\otimes e)\otimes v_1\otimes\cdots\otimes v_n,
\end{align*}
where $P_{H_k^e}$ denotes the orthogonal projection from $\mathcal H$ onto $H_k^e$ and $v_1,\ldots,v_n\in\mathcal{H}$. The adjoint of $M_k$ is given by
\begin{align*}
M_k^*(v_1\otimes\cdots\otimes v_n)&=m_k^*(P_{H_k^f}v_1)\otimes v_2\otimes\cdots\otimes v_n.
\end{align*}
The adjoints of $A_k^*$ and $D_k^*$ on $\widetilde{\mathcal H}$ are given by
\begin{align*}
A_k(v_1\otimes\cdots\otimes v_n)&=\langle v_1,\mathrm{id}_k\otimes f\rangle v_2\otimes\cdots\otimes v_n,\\
D_k(v_1\otimes\cdots\otimes v_n)&=\langle v_1,\mathrm{id}_k\otimes e\rangle v_2\otimes\cdots\otimes v_n
\end{align*}
for $n\geq2$, while for $v\in\mathcal H$ we set $A_kv=0,\ D_kv=0$. Here $P_{H_k^f}$ denotes the orthogonal projection from $\mathcal H$ onto $H_k^f$. For a positive integer $R$, let $NC_{\mathrm{irr}}(R)$ be the set of noncrossing partitions of $\{1,\dots,R\}$ with exactly one exterior block, that is, such that $1$ and $R$ are in the same block of $\pi$. We write $\mathbbm{1}_R$ for the one-block partition of $\{1,\dots,R\}$.

\begin{lemma}
\label{lemma:prod:1}
Let $R$ be a positive integer. Let $\varepsilon\colon \{1,\dots,R\}\to\{1,*\}$ and let $\iota\colon \{1,\dots,R\}\to\{1,\dots,N\}$ be such that $\iota(1)=\iota(R)=k$. Let $\pi\in NC(R)$ be such that
\begin{enumerate}
\item[$(i)$] $\pi\in NC_{\mathrm{irr}}(R)$.
\item[$(ii)$] If $ B = (q_1, q_2, \dots, q_r) $ is a block of $ \pi $, then $\iota (q_1) = \iota(q_2) = \dots =\iota(q_r) $ and $ \varepsilon(q_1) \neq \varepsilon(q_2) \neq \dots \neq \varepsilon(q_r)  \neq \varepsilon(q_1)$.
\end{enumerate}
Define $\varphi_\pi\colon \{1,\dots,R\}\to\mathcal L(\widetilde{\mathcal H})$ as follows:
\begin{itemize}
\item 
If $j\in\{1,R\}$, set $\varphi_\pi(j)=M_{\iota(j)}^{\varepsilon(j)}$.

\item 
If $2\leq j\leq R-1$ and $\varepsilon(j)=1$, set
\begin{align*}
\varphi_\pi(j)=
\begin{cases}
D_{\iota(j)},&\text{if $j$ is the least element of a block of $\pi$,}\\
A_{\iota(j)}^*,&\text{if $j$ is the greatest element of a block of $\pi$,}\\
M_{\iota(j)},&\text{otherwise.}
\end{cases}
\end{align*}

\item
If $2\leq j\leq R-1$ and $\varepsilon(j)=*$, set
\begin{align*}
\varphi_\pi(j)=
\begin{cases}
A_{\iota(j)},&\text{if $j$ is the least element of a block of $\pi$,}\\
D_{\iota(j)}^*,&\text{if $j$ is the greatest element of a block of $\pi$,}\\
M_{\iota(j)}^*,&\text{otherwise.}
\end{cases}
\end{align*}
\end{itemize}

Let $1_k$ be the constant function $1$ in $L^2(\sigma_k)$, and set $\eta_k=1_k\otimes e+1_k\otimes f$. Then
\begin{align}
\kappa_\pi[X_{\iota(1)}^{\varepsilon(1)},\dots,X_{\iota(R)}^{\varepsilon(R)}]=\langle\varphi_\pi(1)\cdots\varphi_\pi(R)\eta_k,\eta_k\rangle.
\label{eq:lemma_phi}
\end{align}
\end{lemma}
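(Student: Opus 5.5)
The plan is to prove \eqref{eq:lemma_phi} by induction on the number of blocks of $\pi$, removing inner interval blocks one at a time. First I reduce the left-hand side to a product of integrals. By multiplicativity of free cumulants and the $*$-freeness of $X_1,\dots,X_N$,
\begin{align*}
\kappa_\pi[X_{\iota(1)}^{\varepsilon(1)},\dots,X_{\iota(R)}^{\varepsilon(R)}]=\prod_{B\in\pi}\kappa_{|B|}[X_{\iota(B)}^{\varepsilon(q_1)},\dots,X_{\iota(B)}^{\varepsilon(q_r)}].
\end{align*}
Condition $(ii)$ says that each block carries a single index and is cyclically alternating, so $|B|$ is even. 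Then \eqref{eq:compound-model} gives
\begin{align*}
\kappa_\pi[X_{\iota(1)}^{\varepsilon(1)},\dots,X_{\iota(R)}^{\varepsilon(R)}]=\prod_{B\in\pi}\int_\RR t^{|B|}\,\di{\sigma_{\iota(B)}}(t).
\end{align*}
It therefore suffices to show that $\langle\varphi_\pi(1)\cdots\varphi_\pi(R)\eta_k,\eta_k\rangle$ equals this product.

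\textbf{Key local computation.} Let $B=\{a<q_2<\dots<q_{r-1}<b\}$ with $2\le a$, $b\le R-1$, be a block that is an interval $\{a,a+1,\dots,b\}$; it has index $\iota(B)=l$. I claim that for every tensor $w=v_1\otimes\cdots\otimes v_n$ with $n\ge1$,
\begin{align*}
\varphi_\pi(a)\cdots\varphi_\pi(b)\,w=\Big(\int t^{|B|}\di{\sigma_l}(t)\Big)\,w.
\end{align*}
Suppose first $\varepsilon(b)=1$. Then $\varphi_\pi(b)=A_l^*$ prepends $\mathrm{id}_l\otimes f$. The next element to the left has $\varepsilon=*$ and, being a middle element, acts by $M_l^*$. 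It sees the $f$-component of the first factor and produces $\mathrm{id}_l^2\otimes e$. Each subsequent middle element multiplies the first factor by $\mathrm{id}_l$ and flips $e\leftrightarrow f$; alternation guarantees that the operator always matches the current label ($M_l$ acts on $e$, $M_l^*$ on $f$). At the least element $a$ the first factor is $\mathrm{id}_l^{r-1}\otimes f$ with $\varepsilon(a)=1$, or $\otimes e$ with $\varepsilon(a)=*$, because $r$ is even. Thus $\varphi_\pi(a)$ is $D_l$ when $\varepsilon(a)=1$ and $A_l$ when $\varepsilon(a)=*$, and it pairs the first factor against $\mathrm{id}_l\otimes e$ or $\mathrm{id}_l\otimes f$ respectively.

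This parity bookkeeping is the step I expect to be most delicate. I would first tabulate the label after each step as a function of $\varepsilon$, and check that the annihilator's label matches, so that the inner product is $\int t^{r}\di{\sigma_l}$ rather than $0$. The case $\varepsilon(b)=*$ is symmetric, using $D_l^*$, $M_l$, and then $A_l$ at the end. Since the creation and annihilation operators act only on the new top factor, $w$ itself is untouched.

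\textbf{Induction and base case.} If $\pi\ne\mathbbm{1}_R$, a noncrossing partition has an interval block; by $(i)$ it can be chosen not to contain $1$ or $R$. Every operator indexed by positions in that block is determined by the block alone, since least, greatest and middle are intrinsic to the block. Moreover the operators to the right of it produce a tensor of length $\ge1$; this follows because the first operator applied is $M_k^{\varepsilon(R)}$, which preserves length. Removing the block yields $\pi'\in NC_{\mathrm{irr}}(R-|B|)$, which still satisfies $(ii)$ with the restricted $\varepsilon,\iota$, and $\varphi_{\pi'}$ is the restriction of $\varphi_\pi$. The local computation then peels off the factor $\int t^{|B|}\di{\sigma_l}$, and induction applies.

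In the base case $\pi=\mathbbm{1}_R$ all operators are $M_k^{\pm}$, and $R$ is even. If $\varepsilon(R)=1$, then $M_k\eta_k=\mathrm{id}_k\otimes f$. Successive alternating applications give $\mathrm{id}_k^{R}\otimes(e\text{ or }f)$, whose inner product with $\eta_k=1_k\otimes e+1_k\otimes f$ is $\int t^R\di{\sigma_k}$. The case $\varepsilon(R)=*$ is analogous, and this completes the proof.
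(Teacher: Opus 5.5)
Your route is the same as the paper's: induction on the number of blocks, peeling off an inner interval block that acts as the scalar $\int_{\RR}t^{|B|}\di{\sigma_l}(t)$ times the identity, down to the one-block base case. However, the parity bookkeeping in your key local computation is stated incorrectly, and as written it gives $0$.

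\textbf{The error.} In the case $\varepsilon(b)=1$, cyclic alternation within $B$ forces $\varepsilon(a)=*$, since $r=|B|$ is even and so $a,b$ are an odd distance $r-1$ apart. There is therefore no alternative ``$\varepsilon(a)=1$''. Your sentence pairs the label $f$ with $\varepsilon(a)=1$ and the label $e$ with $\varepsilon(a)=*$, which is the reverse of what happens. Your next sentence correctly says that $D_l$ tests against $\mathrm{id}_l\otimes e$ and $A_l$ against $\mathrm{id}_l\otimes f$. Combined with the reversed labels, the pairing vanishes in both cases, which contradicts the claimed scalar $\int_{\RR}t^{r}\di{\sigma_l}(t)$.

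\textbf{The correct bookkeeping.} Take $\varepsilon(b)=1$. Then $A_l^*$ creates $\mathrm{id}_l\otimes f$. The $r-2$ middle operators then perform an even number of flips. So the first factor is $\mathrm{id}_l^{\,r-1}\otimes f$ when $\varphi_\pi(a)=A_l$ acts. $A_l$ tests it against $\mathrm{id}_l\otimes f$, giving $\int_{\RR}t^{r}\di{\sigma_l}(t)$.

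Symmetrically, take $\varepsilon(b)=*$. Then $D_l^*$ creates label $e$, which is still $e$ after the even number of flips. Here $\varepsilon(a)=1$, and $D_l$ tests against $\mathrm{id}_l\otimes e$.

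With this correction your argument is complete and coincides with the paper's proof. The rest is fine: the reduction of the cumulant to a product of moments of $\sigma_{\iota(B)}$, the existence of an inner interval block when $\pi\neq\mathbbm{1}_R$, the fact that $\varphi_{\pi'}$ is the restriction of $\varphi_\pi$, and the base case.
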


 \begin{proof}
We prove \eqref{eq:lemma_phi} by induction on the number of blocks of $\pi$. Suppose first that $\pi=\mathbbm{1}_R$. Then condition $(ii)$ implies that $\varepsilon(1),\dots,\varepsilon(R)$ is cyclically alternating and $\iota(1)=\cdots=\iota(R)=k$. Hence, by \eqref{eq:compound-model},
\begin{align*}
\kappa_\pi[X_{\iota(1)}^{\varepsilon(1)},\dots,X_{\iota(R)}^{\varepsilon(R)}]=\int_{\RR}t^R\di{\sigma_k}(t).
\end{align*}
On the other hand,
\begin{align*}
\langle\varphi_\pi(1)\cdots\varphi_\pi(R)\eta_k,\eta_k\rangle=\langle M_k^{\varepsilon(1)}\cdots M_k^{\varepsilon(R)}\eta_k,\eta_k\rangle=\int_{\RR}t^R\di{\sigma_k}(t).
\end{align*}
Thus the assertion holds when $\pi$ has one block.

Assume now that $\pi$ has more than one block. Since $\pi\in NC_{\mathrm{irr}}(R)$, there exists an interval block
\begin{align*}
B=\{q+1,q+2,\dots,q+l\}
\end{align*}
which does not contain $1$ or $R$. Write $\iota(q+1)=\cdots=\iota(q+l)=s$. By condition $(ii)$, the signs on $B$ are cyclically alternating. If $\varepsilon(q+l)=1$, then $\varepsilon(q+1)=*$, and
\begin{align*}
\varphi_\pi(q+1)\cdots\varphi_\pi(q+l)\xi=\left(\int_{\RR}t^l\di{\sigma_s}(t)\right)\xi
\end{align*}
for every $\xi\in\widetilde{\mathcal H}$. Indeed, the rightmost operator creates $\mathrm{id}_s\otimes f$, the middle operators multiply by the appropriate powers of $t$, and the leftmost operator annihilates the created letter. The case $\varepsilon(q+l)=*$ is similar, with $e$ and $f$ interchanged. Therefore
\begin{align*}
\varphi_\pi(1)\cdots\varphi_\pi(R)\eta_k=\left(\int_{\RR}t^l\di{\sigma_s}(t)\right)\varphi_\pi(1)\cdots\varphi_\pi(q)\varphi_\pi(q+l+1)\cdots\varphi_\pi(R)\eta_k.
\end{align*}
On the cumulant side, by multiplicativity over blocks,
\begin{align*}
\kappa_\pi[X_{\iota(1)}^{\varepsilon(1)},\dots,X_{\iota(R)}^{\varepsilon(R)}]=\left(\int_{\RR}t^l\di{\sigma_s}(t)\right)\kappa_{\pi^\prime}[X_{\iota^\prime(1)}^{\varepsilon^\prime(1)},\dots,X_{\iota^\prime(R-l)}^{\varepsilon^\prime(R-l)}],
\end{align*}
where $\pi^\prime$ and the functions $\iota^\prime,\varepsilon^\prime$ are obtained by deleting the interval block $B$ and relabeling the remaining ordered set. The partition $\pi^\prime$ still satisfies conditions $(i)$ and $(ii)$. Applying the induction hypothesis to $\pi^\prime$ gives the desired identity.
\end{proof}

\begin{lemma}\label{lemma:prod:2}
 
 Let $ R $ be a positive integer and let
 $ \varepsilon\colon\{1, 2, \dots, R \} \to \{1, \ast\} $ and 
 $\iota\colon\{1, 2, \dots, R\} \to \{1, 2, \dots, N\} $ be two mappings.
 
  Suppose that 
  $ \psi\colon\{1, 2, \dots, R \} \to \mathcal{L}(\widetilde{\mathcal{H}}) $ is such that:
  \begin{enumerate}
  \item[(a)] $ \psi(1) \in \{ M_{\iota(1)}, A_{\iota(1)}^\ast \} $ if $ \varepsilon(1) = 1 $, 
  respectively
  $ \psi(1) \in \{ M_{\iota(1)}^\ast , D_{\iota(1)}^\ast \} $ if $ \varepsilon(1) = \ast $
  \item[(b)] $ \psi(R) \in \{ M_{\iota(R)}, D_{\iota(R)} \} $ if $ \varepsilon(R) = 1 $,
  respectively
  $ \psi(R) \in \{ M_{\iota(R)}^\ast,  A_{\iota(R)}   \} $  if $ \varepsilon(R) = \ast $
  \item[(c)] if $ 2 \leq j \leq R -1 $ then $ \psi(j) \in \{  M_{\iota(j)}^{\varepsilon(j)}, (A^\ast_{\iota(j)})^{\varepsilon(j)}, D_{\iota(j)} ^{\varepsilon(j)} \}$ 
  \end{enumerate}
  
  Let $ k, k ^\prime \in \{1, 2, \dots, N\}$.
  Then 
  $ \langle \psi(1) \psi(2) \cdots \psi(R) \eta_k, \eta_{k^\prime} \rangle =0 $
  unless $k =k^\prime = \iota(1) =\iota(R) $ and there exists a partition $ \pi \in NC(R) $ satisfying conditions $(i)$ and $(ii)$ from Lemma \ref{lemma:prod:1} such that 
  $ \psi = \varphi_{\pi} $.
\end{lemma}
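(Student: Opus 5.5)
We will read the product $\psi(1)\cdots\psi(R)\eta_k$ from right to left as a stack machine on the Fock space and argue by induction on $R$. The creation operators $A^*_j, D^*_j$ push a letter $\mathrm{id}_j\otimes f$, respectively $\mathrm{id}_j\otimes e$. The annihilators $A_j, D_j$ pop a letter, which must be $\mathrm{id}_j\otimes f$, respectively $\mathrm{id}_j\otimes e$, up to an $L^2(\sigma_j)$ inner product. The operators $M_j, M_j^*$ act only on the top letter: they require it to lie in $H_j^e$, respectively $H_j^f$, and flip $e\leftrightarrow f$ while multiplying by $t$.

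\textbf{Bottom letter.} The bottom letter of the stack starts as $\eta_k$. Each component of $\eta_k$ lies in $H_k^e$ or $H_k^f$, and $A_j, D_j$ kill single-letter vectors. Hence the bottom letter can only be modified by $M$-type operators of index $k$, which preserve the space $L^2(\sigma_k)\otimes\CC^2$. Consequently the final vector has bottom letter in $H_k^e\oplus H_k^f$. The pairing with $\eta_{k'}$ then forces the stack to be of length one at the end and $k=k'$, since $H_k$ and $H_{k'}$ are orthogonal for $k\neq k'$.

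\textbf{Reconstructing the partition.} Next we record, for each $j$, whether $\psi(j)$ acts on the bottom letter. Conditions (a) and (b) then force several things.
\begin{itemize}
\item The first operator applied, $\psi(R)$, must act on the bottom letter. It cannot be $D$ or $A$, since these annihilate single-letter vectors. So $\psi(R)=M_{\iota(R)}^{\varepsilon(R)}$ with $\iota(R)=k$.
\item Symmetrically, the last operator $\psi(1)$ must return to length one on the bottom. It cannot be a creator, since a creator leaves a vector of length at least two, which is orthogonal to $\eta_{k'}$. So $\psi(1)=M_{\iota(1)}^{\varepsilon(1)}$ with $\iota(1)=k$.
\end{itemize}
We then define $\pi$ as follows. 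The positions acting on the bottom letter form one block $B_0$, which contains $1$ and $R$. Every other letter on the stack is pushed at some position $p$ (the greatest element), modified by $M$'s at intermediate positions, and popped at some position $q<p$ (the least element). All positions touching that letter form one block.

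Stack (LIFO) discipline gives the following properties.
\begin{itemize}
\item Blocks are noncrossing, and every letter pushed is eventually popped, because the final length is one.
\item Within a block the index $\iota$ is constant. The letter carries index $j$, and each $M_{j'}$, $A_{j'}$, $D_{j'}$ with $j'\ne j$ gives zero by orthogonality of the $L^2(\sigma_j)$ summands.
\item The $e/f$ label must alternate along the block, and $M^\varepsilon$ requires a definite label. This forces $\varepsilon$ to alternate between consecutive elements of a block. It also forces cyclic closure: a letter pushed as $f$ by $A^*$ (where $\varepsilon=1$) must be popped by $A$ (where $\varepsilon=*$), and similarly for $D^*/D$. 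On $B_0$, the value $\langle\cdot,\eta_k\rangle$ does not vanish on either component, so alternation along $B_0$ comes only from the $M$'s. The cyclic condition $\varepsilon(1)\ne\varepsilon(R)$ follows because the $M$-chain must close: the first action on the $e$-component requires $M$, and returning gives an odd number of flips. We will check this parity carefully.
\end{itemize}
With these properties, $\pi\in NC_{\mathrm{irr}}(R)$ satisfies (i) and (ii). Matching the role of each position (least, greatest, or interior element of its block) against the definition shows $\psi=\varphi_\pi$.

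\textbf{Main obstacle.} The main obstacle is the parity/cyclic alternation on the exterior block $B_0$ and the exact matching of $A$ versus $D$ at block endpoints. Two things must be ruled out: a nonzero contribution from mismatched pairs such as $D^*$ pushed and then $A$ popped (orthogonality of $e$ and $f$ handles this), and a block where the $\varepsilon$-constraint fails cyclically. We will handle these by tracking the $e/f$ label as a $\mathbb{Z}/2$ state flipped exactly by each $M$. This reduces the claim to counting the elements of each block.
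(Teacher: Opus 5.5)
Your stack reconstruction of $\pi$ is correct as far as it goes. This covers the bottom-letter argument forcing $\psi(R)=M_k^{\varepsilon(R)}$, $\psi(1)=M_k^{\varepsilon(1)}$ and $k=k'$, the noncrossing irreducible block structure, the constancy of $\iota$ on blocks, the alternation of $\varepsilon$ between \emph{consecutive} elements of a block, and the final identification $\psi=\varphi_\pi$. Reading the word directly this way is a reasonable alternative to the paper's induction that deletes an innermost interval block.

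The gap is the closing half of condition (ii), $\varepsilon(q_r)\neq\varepsilon(q_1)$, and the $e/f$ bookkeeping cannot produce it. Positions with $\varepsilon=1$ (operators $M$, $D$, $A^\ast$) consume an $e$-label and/or leave an $f$-label, and positions with $\varepsilon=\ast$ do the reverse. So label consistency along a block is \emph{exactly} consecutive alternation. Closing is then equivalent to $|B|$ being even, which the labels do not see.

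In particular, your claims that an $A^\ast$-pushed letter must be popped by $A$, and that orthogonality of $e$ and $f$ rules out mismatched pairs, are false: an odd number of intermediate $M$'s flips the label. For instance, $D_kM_k^\ast A_k^\ast$ and $A_kM_kD_k^\ast$ are both admissible under (c), and both equal $(\int_{\RR}t^3\di{\sigma_k}(t))\,\mathrm{Id}$.

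On the exterior block there is no ``return'' constraint at all, as you note yourself, since $\eta_{k'}$ pairs with both components. Take $R=3$, $\iota\equiv k$, $\varepsilon=(1,\ast,1)$, $\psi=(M_k,M_k^\ast,M_k)$. Then (a)--(c) hold and no admissible $\pi$ exists, yet $\langle\psi(1)\psi(2)\psi(3)\eta_k,\eta_k\rangle=\int_{\RR}t^3\di{\sigma_k}(t)$. For a non-symmetric $\sigma_k$ this is nonzero, so an argument that never invokes symmetry cannot close.

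The missing ingredient is the symmetry of the measures $\sigma_j$. Once you have the block structure, compute the term directly. A letter of an interior block $B$ picks up one factor of $\mathrm{id}_{\iota(B)}$ when it is created, one at each intermediate $M$-type operator, and one when it is paired off by $A$ or $D$. The bottom letter picks up one factor at each element of $B_0$. Hence, whenever the index and label constraints you derived are satisfied,
\[
\langle\psi(1)\cdots\psi(R)\eta_k,\eta_{k'}\rangle=\prod_{B\in\pi}\int_{\RR}t^{|B|}\di{\sigma_{\iota(B)}}(t).
\]
This vanishes as soon as some block has odd cardinality, which also disposes of $R=1$. For blocks of even cardinality, consecutive alternation implies cyclic alternation.

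This is exactly the fact the paper's proof uses tacitly. It asserts that nonvanishing of the pure $M$-chain (Case 1), or of the interval product $\psi(q+1)\cdots\psi(q+l)$ (Case 3), forces cyclic alternation and $\psi(q+1)=A_{\iota(q+1)}$; that inference holds only because odd moments of $\sigma_k$ vanish.
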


\begin{proof}
We shall show the result by induction on $R$. Suppose that
\begin{align}
\langle\psi(1)\cdots\psi(R)\eta_k,\eta_{k'}\rangle\neq0.
\label{eq:nonzero-psi}
\end{align}
Since the annihilation operators $A_i,D_i$ vanish on $\mathcal H$, $\psi(R)$ cannot be an annihilation operator. Hence $\psi(R)=M_{\iota(R)}^{\varepsilon(R)}$, and $\psi(R)\eta_k\neq0$ implies $\iota(R)=k$. Similarly, $\psi(1)$ cannot be a creation operator, because otherwise $\psi(1)\cdots\psi(R)\eta_k$ has tensor length at least $2$ and is orthogonal to $\eta_{k'}$. Hence $\psi(1)=M_{\iota(1)}^{\varepsilon(1)}$.

For $R=2$, \eqref{eq:nonzero-psi} gives
\begin{align*}
\langle M_{\iota(1)}^{\varepsilon(1)}M_{\iota(2)}^{\varepsilon(2)}\eta_k,\eta_{k'}\rangle\neq0.
\end{align*}
This is possible only if $\iota(1)=\iota(2)=k=k'$ and $\varepsilon(1)\neq\varepsilon(2)$. Thus the assertion holds with $\pi=\mathbbm{1}_2$.

Assume now that $R>2$. Let
\begin{align*}
\mathcal A=\{j\colon \psi(j)\in\{A_{\iota(j)},D_{\iota(j)}\}\},\quad \mathcal B=\{j\colon \psi(j)\in\{A_{\iota(j)}^*,D_{\iota(j)}^*\}\}.
\end{align*}
We distinguish three cases:

\textbf{Case 1}:   $ \mathcal{A} = \emptyset = \mathcal{B} $. 

Then $\psi(j)=M_{\iota(j)}^{\varepsilon(j)}$ for every $j$. Condition \eqref{eq:nonzero-psi} gives, as above,
$ \iota(1)=\cdots=\iota(R)=k=k'$ and $\varepsilon(1)\neq\varepsilon(2)\neq\cdots\neq\varepsilon(R)\neq\varepsilon(1)$. Hence $\psi=\varphi_\pi$ for $\pi=\mathbbm{1}_R$.

\textbf{Case 2}: $\mathcal{B} = \emptyset \neq \mathcal{A} $.

Suppose next that $\mathcal B=\emptyset$ and $\mathcal A\neq\emptyset$. Let $s=\max\mathcal A$. Then $\psi(j)$ is a multiplication operator for every $j>s$, so $\psi(s+1)\cdots\psi(R)\eta_k\in\mathcal H$. Since $\psi(s)$ is an annihilation operator, this contradicts \eqref{eq:nonzero-psi}. Hence this case cannot occur.

\textbf{Case 3}: $ \mathcal{B} \neq \emptyset $.

Let $s=\min\mathcal B$. If $\mathcal A\cap\{1,\dots,s-1\}=\emptyset$, then all operators to the left of $\psi(s)$ are multiplication operators. Since $\psi(s)$ is a creation operator, the final vector has tensor length at least $2$, contradicting \eqref{eq:nonzero-psi}. Thus $\mathcal A\cap\{1,\dots,s-1\}\neq\emptyset$. Let $ t = \max(\mathcal{A} \cap \{1, 2, \dots, s-1\} ) $.  To simplify the notation, let $ q \geq 0 $ and $ l \geq 2 $ be such that $ t = q+1 $ and $ s = q + l $. 
    In particular, $ \psi(j) = M_{\iota(j)}^{\varepsilon(j)} $ for all $ j  $ such that $ q+1 < j < q+l $.
    
     If $ \varepsilon(q+l) = 1 $, then 
     $ q+l \in \mathcal{B} $ gives that $ \psi(q+l ) =  A^\ast_{\iota(q+l )} $ 
     and the condition
      $ \psi(q+1)\psi(q+2) \cdots \psi(q+l) \neq 0 $ 
      gives that
       $\iota(q+1) = \dots = \iota(q+l) $, 
      $ \varepsilon(q+1) \neq\varepsilon(q+2) \neq \dots \neq \varepsilon(q+l) \neq \varepsilon(q+1) $ 
      and $\psi(q+1) = A_{\iota(q+1) } $.
      Therefore
\[
\psi(q+1)\cdots\psi(q+l)=\left(\int_{\RR}t^l\di{\sigma_{\iota(q+1)}}(t)\right)\cdot \mathrm{Id},
\]
      which gives 
      \[   \psi(1) \cdots \psi(R) = \kappa_l(X_{\iota(q+1)} , X_{\iota(q+1)}^\ast, \dots )\cdot  \psi(1) \cdots \psi(q)  \psi(q+l+1) \cdots \psi(R) .
      \]
Hence, after deleting the interval \(\{q+1,\dots,q+l\}\), the resulting
sequence of operators still satisfies the same assumptions, and the corresponding
inner product is still nonzero. Thus the conclusion follows from the induction
hypothesis. The case \(\varepsilon(q+l)=\ast\) is similar.
\end{proof}

We introduce some notation used in the proof of Theorem~\ref{thm:prod}. Set $\eta=\sum_{k=1}^N \eta_k$. Let \(R\ge2\), and let $\varepsilon\colon\{1,\dots,R\}\to\{1,\ast\},\quad
\iota\colon\{1,\dots,R\}\to\{1,\dots,N\}$.
Define a map
$\psi_{\iota,\varepsilon}\colon \{1,\dots,R\}\to\mathcal L(\widetilde{\mathcal H})$
as follows:
\begin{align*}
 \psi_{\iota,\varepsilon}(1) &=
 \begin{cases}
 M_{\iota(1)}+A_{\iota(1)}^\ast, & \varepsilon(1)=1,\\
 M_{\iota(1)}^\ast+D_{\iota(1)}^\ast, & \varepsilon(1)=\ast,
 \end{cases}\\
 \psi_{\iota,\varepsilon}(R) &=
 \begin{cases}
 M_{\iota(R)}+D_{\iota(R)}, & \varepsilon(R)=1,\\
 M_{\iota(R)}^\ast+A_{\iota(R)}, & \varepsilon(R)=\ast,
 \end{cases}\\
 \psi_{\iota,\varepsilon}(j)
 &=M_{\iota(j)}^{\varepsilon(j)}
 +(A_{\iota(j)}^\ast)^{\varepsilon(j)}
 +D_{\iota(j)}^{\varepsilon(j)} \text{ for } 1<j<R.
\end{align*}
We also set
\[
\Psi(\iota,\varepsilon)
=
\psi_{\iota,\varepsilon}(1)\psi_{\iota,\varepsilon}(2)\cdots
\psi_{\iota,\varepsilon}(R).
\]

After expanding \(\Psi(\iota,\varepsilon)\), Lemmas \ref{lemma:prod:1}
and \ref{lemma:prod:2} imply the following identity.
\begin{lemma}
\[ \sum_{\pi\in NC_{\mathrm{irr}}(R)} \kappa_\pi \big[  
X_{\iota(1)}^{\varepsilon(1)}, X_{\iota(2)}^{\varepsilon(2)}, \dots, X_{\iota(R)}^{\varepsilon(R)} \big] = \langle \Psi(\iota, \varepsilon) \eta, \eta \rangle.
\]
\end{lemma}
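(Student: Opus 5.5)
The identity follows by expanding $\Psi(\iota,\varepsilon)$ and $\eta$ multilinearly, then matching the surviving terms with partitions through Lemmas~\ref{lemma:prod:1} and~\ref{lemma:prod:2}. Write $\eta=\sum_{k}\eta_k$. Expanding each factor $\psi_{\iota,\varepsilon}(j)$ into its two or three summands gives
\begin{align*}
\langle\Psi(\iota,\varepsilon)\eta,\eta\rangle=\sum_{k,k'=1}^N\sum_{\psi}\langle\psi(1)\psi(2)\cdots\psi(R)\eta_k,\eta_{k'}\rangle,
\end{align*}
where $\psi$ ranges over all maps $\{1,\dots,R\}\to\mathcal L(\widetilde{\mathcal H})$ selecting one summand at each position. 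By construction, the admissible summands at positions $1$, $R$ and $2\le j\le R-1$ are exactly those listed in conditions (a), (b), (c) of Lemma~\ref{lemma:prod:2}. Hence every term in the expansion falls under that lemma.

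Lemma~\ref{lemma:prod:2} says the only nonzero terms are those with $k=k'=\iota(1)=\iota(R)$ and $\psi=\varphi_\pi$ for some $\pi\in NC(R)$ satisfying conditions $(i)$ and $(ii)$ of Lemma~\ref{lemma:prod:1}. For each such term, Lemma~\ref{lemma:prod:1} gives
\begin{align*}
\langle\varphi_\pi(1)\cdots\varphi_\pi(R)\eta_k,\eta_k\rangle=\kappa_\pi[X_{\iota(1)}^{\varepsilon(1)},\dots,X_{\iota(R)}^{\varepsilon(R)}].
\end{align*}

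To turn the resulting sum into the claimed one, two points must be checked.

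\textbf{Injectivity.} The map $\pi\mapsto\varphi_\pi$ must be injective, so that no partition is counted twice. From $\varphi_\pi$ one reads off which interior points are least elements of blocks (the $D$ or $A$ operators), which are greatest elements (the $A^*$ or $D^*$ operators), and which are middle elements (the $M$-type operators). For a noncrossing partition this marking determines the blocks: each block runs from an opener to the next closer, matched by the usual parenthesis matching. The points $1$ and $R$ always lie in the outer block.

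\textbf{Vanishing of the remaining terms.} Every $\pi\in NC_{\mathrm{irr}}(R)$ that violates condition $(ii)$ must have $\kappa_\pi[\dots]=0$. This follows from $*$-freeness, since mixed cumulants vanish and a block with differing $\iota$-values contributes zero, and from $R$-diagonality, since cumulants with non-alternating or odd-length arguments vanish. One subtlety is that $\iota(1)=\iota(R)$ is needed for $\pi$ to satisfy $(ii)$. If $\iota(1)\neq\iota(R)$, then both sides vanish: the left side by freeness, because $1$ and $R$ share a block, and the right side by Lemma~\ref{lemma:prod:2}.

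With these two points, both sides equal the sum of $\kappa_\pi$ over the partitions $\pi\in NC_{\mathrm{irr}}(R)$ satisfying $(ii)$. The main obstacle is the bookkeeping in the injectivity step: the paper treats singleton blocks only implicitly, and for $R$-diagonal elements singletons have odd length and so vanish anyway.
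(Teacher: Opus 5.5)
Your proposal is correct and follows the same route as the paper. The paper's entire argument is ``expand $\Psi(\iota,\varepsilon)$ and apply Lemmas~\ref{lemma:prod:1} and~\ref{lemma:prod:2}'', and your injectivity check and your vanishing argument for $\pi\in NC_{\mathrm{irr}}(R)$ violating $(ii)$ are exactly the details it leaves implicit.

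Two small points should be stated explicitly. First, for every $\pi$ satisfying $(i)$ and $(ii)$, $\varphi_\pi$ really is one of the selections in the expansion. Its values at $1$ and $R$ are $M_{\iota(1)}^{\varepsilon(1)}$ and $M_{\iota(R)}^{\varepsilon(R)}$, and its interior values lie in the sets of condition (c). Second, in the injectivity step, openers and closers are paired by parenthesis matching, not ``the next closer''. Each middle point is then assigned to the innermost block open at that point.
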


\begin{proof}[Proof of Theorem \ref{thm:prod}]
First, we show that $T$ is $R$-diagonal. It is enough to prove that, for every $n\geq1$ and every map
$\ee\colon\{1,\dots,n\}\to\{1,\ast\}$,
 \begin{equation}\label{eq:01}
  \kappa_n \left( T^{\ee(1)}, T^{\ee(2)}, \dots, T^{\ee(n)} \right) = 0 
  \end{equation}
 unless $ \ee(1)\neq\ee(2)\neq\dots\neq\ee(n)\neq\ee(1)$.
 
  The left-hand side of \eqref{eq:01} 
   is a linear combination of terms of the type 
 \begin{equation}\label{k:01}
 \kappa_n\left( 
 Y_1^{\theta(1)} Y_2^{\theta(2)} \cdots Y_m^{\theta(m)}, 
  Y_{m+1}^{\theta(m+1)}\cdots Y_{2m}^{\theta(2m)}, 
  \dots, 
  Y_{m(n-1) + 1}^{\theta(m(n-1)+1)} \cdots Y_{nm}^{\theta(nm)}
 \right),
 \end{equation}
where
$ \theta(m(s-1) + t) = \ee(s)$ for $ 1\leq s \leq n $ and $ 1 \leq t \leq m $, and
\[
Y_{m(s-1) + t } =
\left\{
\begin{array}{ll}
X_{\iota_s(t)} & \text{if } \ee(s) = 1,\\
X_{\iota_s(m+1-t)}  &\text{if } \ee(s) = \ast,
\end{array}
\right.
\]
for some $\iota_s\in \{1,2,\dots,N\}^m$.
 The formula for free cumulants with products as entries (see, e.g., \cite[Theorem 11.12]{NicaSpeicher2006book}) gives that the free cumulant (\ref{k:01}) equals
 \[
 \sum_{\substack{\pi\in NC(mn)\\ \pi\vee  \widehat{0}_n = \mathbbm{1}_{mn}}}
 \kappa_{\pi} \left[ 
 Y_1^{\theta(1)}, Y_2^{\theta(2)}, \dots, Y_{mn}^{\theta(mn)}
 \right],
 \]
where $\widehat{0}_n=\{\{1,2,\dots,m\}, \dots, \{m(n-1)+1,\dots,mn\}\}$. 
Let $ \pi\in NC(mn) $ be such that $ \pi\vee  \widehat{0}_n = \mathbbm{1}_{mn}$ and 
 \[
   \kappa_{\pi} \big[ 
 Y_1^{\theta(1)}, Y_2^{\theta(2)}, \dots, Y_{mn}^{\theta(mn)}
 \big] \neq 0 .
 \]
 Since $X_1,\dots,X_N$ are free $R$-diagonal elements, every block $(i_1,\dots,i_q)$ of $\pi$ satisfies
 $ Y_{i_1} = Y_{i_2} = \dots = Y_{i_q} $ and
 $ \theta(i_1)\neq\theta(i_2)\neq\dots\neq\theta(i_q)\neq\theta(i_1)$.
 
Let  
$ t \in \{ 1, 2, \dots, n \} $. 
 Consider the block $ B $ of $ \pi $ that contains $ tm $, and let $ p $ be the next element of $ B $ after $ tm $ in the cyclic order. Note that the set $\{ tm+1, tm+2, \dots, p-1\} $ is a union of blocks of $\pi $. Hence the set 
$\{ \theta(tm+1), \theta(tm+2), \dots, \theta(p-1) \} $
 has the same number of elements that equal $ 1 $ as the number of elements that equal $\ast$.
Since $\theta$ is constant on each interval
$\{m(s-1)+1,\dots,ms\}$, this can happen only if $p$ is the first point of one of these intervals; otherwise the last partial interval would contribute a nonzero number, strictly smaller than $m$, of equal signs. Thus $p=ms+1$ for some $s$, where $mn+1$ is understood as $1$

We claim that in fact $p=tm+1$. If not, then the cyclic interval between $tm+1$ and $p-1$ is a nonempty proper union of blocks of $\pi$ and also a union of blocks of $\widehat{0}_n$. This contradicts
$ \pi\vee\widehat{0}_n=\mathbbm{1}_{mn} $. Hence $ tm $ and $ tm+1 $ are consecutive elements in $ B $. Finally, using again the $ R $-diagonality of $ X_1, \dots, X_N $, we obtain that $ \theta(tm) \neq \theta(tm+1) $, which gives that $ \ee(t) \neq \ee(t+1) $, where $\ee(n+1)=\ee(1)$. This proves \eqref{eq:01}.
 
 In particular, we showed that 
  $\displaystyle  \kappa_{\pi} \big[ 
 Y_1^{\theta(1)}, Y_2^{\theta(2)}, \dots, Y_{mn}^{\theta(mn)}
 \big] \neq 0  $ and $ \pi\vee  \widehat{0}_n = \mathbbm{1}_{mn}$
  if and only if 
 \[
 \displaystyle  \kappa_{\pi} \big[ 
 Y_1^{\theta(1)}, Y_2^{\theta(2)}, \dots, Y_{mn}^{\theta(mn)}
 \big] \neq 0  
 \]
 and 
 $tm, tm+1 $ are in the same block of $ \pi $ for each $ t \in \{1, 2, \dots, n\} $, where $mn+1$ is understood as $1$. The converse implication is immediate, since these relations connect the $n$ product blocks cyclically.
 
 Next, we shall show that $ T $ is freely infinitely divisible. 
 
 Let $ V \in \mathcal{L}(\widetilde{\mathcal{H}}) $ be given by
 \[
 \displaystyle V =\sum_{\iota\in \{1, 2, \dots, N\}^m} \alpha(\iota)\Psi(\iota, 1).
 \]
 Here, in the second argument of $\Psi$, the symbols $1$ and $\ast$ denote the constant maps with values $1$ and $\ast$, respectively. Notice that, by the definition of $\Psi$,
 \[
 \Psi(\iota,1)^*=\Psi(\widehat{\iota},\ast),
 \qquad
 \widehat{\iota}(j)=\iota(m+1-j).
 \]
 
 We shall show that 
 \[
 \kappa_{2n}\big(T, T^\ast, \dots, T, T^\ast\big) = \big\langle( VV^\ast )^n\eta, \eta\big\rangle.
 \]
 
 For this, it suffices to show that, for any $\iota_1, \dots, \iota_{2n}\colon\{1, 2, \dots, m\} \to \{1, 2, \dots, N\} $,
 \begin{equation}\label{main:1}
 \begin{aligned}
 &\kappa_{2n}\big( X^{\iota_1}, (X^{\iota_2})^\ast, \dots, X^{\iota_{2n-1}}, (X^{\iota_{2n}})^\ast  
 \big) \\
 &\quad= \big\langle \Psi(\iota_1, 1)\Psi(\widehat{\iota_2}, \ast)\cdots
 \Psi(\iota_{2n-1},1)\Psi(\widehat{\iota_{2n}}, \ast)
 \eta, \eta\big\rangle .
 \end{aligned}
 \end{equation}
 
 The left-hand side of (\ref{main:1}) equals 
  \[
   \sum \kappa_{\pi} \big[ 
 Y_1^{\theta(1)}, Y_2^{\theta(2)}, \dots, Y_{2mn}^{\theta(2mn)}
 \big],
 \]
 where $ Y_j $ and $ \theta(j) $ are defined as before, and the sum is taken over all
 $ \pi \in NC(2mn) $ such that $\pi $ connects $ tm $ and $ tm+1 $ for all $ t \in \{1, 2, \dots, 2n\} $, where $2mn+1$ is understood as $1$.
 
 The right-hand side of (\ref{main:1}) expands as a sum of products of operators of the types $M_k, A_k, D_k $ and their adjoints ($1 \leq k \leq N$). Lemmas \ref{lemma:prod:1} and \ref{lemma:prod:2} show that the terms with non-vanishing contribution with respect to the state $\langle \cdot \eta, \eta \rangle $ are of the form 
 $  \varphi_{\pi}(1) \varphi_{\pi}(2 )    \cdots \varphi_{\pi}(2mn)  $
  with $ \pi \in NC(2mn) $ satisfying conditions $(i)$ and $(ii) $ from Lemma \ref{lemma:prod:1}. It therefore suffices to show that:
 \begin{enumerate}
 \item[(a)] If $ \pi \in NC(2mn) $ connects $ tm $ and $ tm+1 $ for each
  $ t \in \{1, 2, \dots, 2n\} $,
   then $ \varphi_{\pi}(1) \varphi_{\pi}(2 )    \cdots \varphi_{\pi}(2mn) $ is a term in the expansion of the right-hand side of (\ref{main:1}).
 \item[(b)] If $ \pi \in NC(2mn) $ is such that
  $  \varphi_{\pi}(1) \varphi_{\pi}(2 )    \cdots \varphi_{\pi}(2mn)  $
   is a term in the expansion of the right-hand side of (\ref{main:1}), then
    $\big\langle  \varphi_{\pi}(1)    \cdots \varphi_{\pi}(2mn)   \eta, \eta \big\rangle= 0 $ 
    unless $ \pi $ connects $ tm $ and $ tm+1 $ for all $ t \in\{1, 2, \dots, 2n\} $.
 \end{enumerate}
 
 For (a), note first that if $ t<2n $ and $ tm $ and $ tm + 1 $ are in the same block of $ \pi $, then $ tm $ is not the greatest element in its block, nor is $ tm+1 $ the least element in its block. The construction of $ \varphi_\pi $ then gives $\varphi_\pi (tm) \notin  \{ A_k^\ast, D_k^\ast :\ 1 \leq k \leq N\}$ and $\varphi_\pi(tm+1) \notin  \{  A_k, D_k : \ 1 \leq k \leq N\}$.
These are exactly the endpoint restrictions in the two adjacent factors of the right-hand side of (\ref{main:1}). For $t=2n$, the condition says that $2mn$ and $1$ are in the same block; by the special definition of $\varphi_\pi$ at the two endpoints, both corresponding operators are multiplication operators, which are also allowed. Hence the conclusion follows.
 
 For (b), let $ \pi \in NC(2mn) $ be such that  $  \varphi_{\pi}(1) \varphi_{\pi}(2 )    \cdots \varphi_{\pi}(2mn)  $
   is a term in the expansion of the right-hand side of (\ref{main:1}) and 
   $\big\langle  \varphi_{\pi}(1)    \cdots \varphi_{\pi}(2mn)   \eta, \eta \big\rangle\neq 0 $. 
   From Lemma \ref{lemma:prod:2}, $ \pi $ connects $ 1 $ and $ 2mn $. 
   Fix $ t < 2n $ and suppose that $ tm, tm+1 $ are in different blocks of $\pi $. Since $ \pi $
   is non-crossing, either $ tm $ is the greatest element of some block of $ \pi $, or $ tm+1 $ is the least element of some block of $ \pi $. Indeed, if neither of these happened, then the block of $tm$ would have a point to the right of $tm+1$, while the block of $tm+1$ would have a point to the left of $tm$, producing a crossing. Hence either $\varphi_\pi(tm) \in \{  A_k^\ast, D_k^\ast: 1 \leq k \leq N  \}$ or $\varphi_\pi(tm+1) \in \{ A_k, D_k : 1 \leq k \leq N \}$, both contradicting the endpoint restrictions in the expansion of the right-hand side of (\ref{main:1}). Therefore $\pi$ connects $tm$ and $tm+1$ for all $t\in\{1,\dots,2n\}$.
This proves (\ref{main:1}), and hence
\[
 \kappa_{2n}\big(T, T^\ast, \dots, T, T^\ast\big)
 =
 \big\langle( VV^\ast )^n\eta, \eta\big\rangle.
\]

Since \(V\) is bounded, the preceding identity shows that the determining sequence of \(T\) is represented by a compactly supported symmetric finite measure. Thus \(\widetilde{\mu}_{|T|}\) is freely infinitely divisible in the compound free Poisson case. The general bounded case follows by a standard approximation argument, using the weak closedness of freely infinitely divisible laws and the closure of $R$-diagonality under convergence in $*$-distribution. Hence \(T\) is freely infinitely divisible, completing the proof.
\end{proof}

\section{Analysis of the support for freely infinitely divisible $R$-diagonal elements}
\label{sec:support_infinitely_divisible_rdiag}

We first recall a result on general $R$-diagonal perturbations that will be used below. This result belongs to the general $R$-diagonal framework and will be proved in the forthcoming revised version of \cite{BZ2022}.

\begin{theorem}[{\cite{BZ2022}, forthcoming revised version}]
\label{thm:positivity}
Assume that $X_0\in \log^+(\MM)\setminus\CC$ and $Y\in \log^+(\MM)\setminus\CC$ are $*$-free and that $Y$ is $R$-diagonal. Let $X=X_0+Y$ and let
\begin{align*}
\Omega=\CC\setminus(S\cup F_1\cup F_2),
\end{align*}
where
\begin{align*}
S&=\{\lambda\in\CC\colon \tau(\ker(X_0-\lambda))+\tau(\ker Y)\ge 1\},\\
F_1&=\left\{\lambda\in\CC\colon \mom_2(Y)\le \frac{1}{\mom_{-2}(X_0-\lambda)}\right\},\\
F_2&=\left\{\lambda\in\CC\colon \mom_2(X_0-\lambda)\le \frac{1}{\mom_{-2}(Y)}\right\}.
\end{align*}
Then $\Omega$ is open, the Brown measure $\mu_X$ is absolutely continuous on $\Omega$, and its density is strictly positive on $\Omega$. Moreover, $\supp(\mu_X)=\cl{\Omega}$.
\end{theorem}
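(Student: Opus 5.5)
The plan is to compute the Brown measure of $X=X_0+Y$ from the regularized potential in Proposition~\ref{prop:log_symm_subord}, letting $y\downarrow0$ and analyzing the limiting subordination equations on the imaginary axis. Because $\widetilde{\mu}_{|X_0-\lambda|}$, $\widetilde{\mu}_{|Y|}$ and their free convolution are symmetric, $F$ maps $\iu\RR_{>0}$ to $\iu\RR_{>0}$. By uniqueness in Theorem~\ref{thm:subord_func}, the subordination functions then satisfy $\omega_1^{(\lambda)}(\iu y)=\iu a(\lambda,y)$ and $\omega_2^{(\lambda)}(\iu y)=\iu b(\lambda,y)$ with $a,b>0$. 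Using \eqref{eq:symm-modulus-cauchy}, $F_{\widetilde{\mu}_{|A|}}(\iu a)=\iu/(a\,\tau((a^2+|A|^2)^{-1}))$. Hence \eqref{eq:rdiag-subordination-F}--\eqref{eq:rdiag-subordination-sum} become the real system
\begin{align*}
a\,\tau\bigl((a^2+|X_0-\lambda|^2)^{-1}\bigr)=b\,\tau\bigl((b^2+|Y|^2)^{-1}\bigr)=\frac{1}{a+b-y}.
\end{align*}
Put $g_1(a)=a^2\tau((a^2+|X_0-\lambda|^2)^{-1})$ and $g_2(b)=b^2\tau((b^2+|Y|^2)^{-1})$. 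At $y=0$ the system is equivalent to
\begin{align*}
g_1(a)+g_2(b)=1,\qquad a\,\tau\bigl((a^2+|X_0-\lambda|^2)^{-1}\bigr)=b\,\tau\bigl((b^2+|Y|^2)^{-1}\bigr).
\end{align*}

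\textbf{Step 1: the limiting system has a positive solution exactly on $\Omega$.} Each $g_j$ increases from $\tau(\ker)$ at $0$ to $1$ at $\infty$. Near $0$, $g_1(a)\approx \tau(\ker(X_0-\lambda))+a^2\mom_{-2}(X_0-\lambda)$ in the kernel-free case. Near $\infty$, $1-g_1(a)\approx \mom_2(X_0-\lambda)/a^2$, and similarly for $g_2$. Eliminating, say by parametrizing with $a$ and solving the second equation for $b$, I will show the following. A solution with $a,b\in(0,\infty)$ exists, and is unique with nondegenerate Jacobian, if and only if $\lambda\notin S\cup F_1\cup F_2$. The set $S$ is where $g_1(0)+g_2(0)\ge1$ blocks the equation at the origin. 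The sets $F_1$ and $F_2$ are exactly where the boundary expansions (ratio of $a^2\mom_{-2}$ against $\mom_2/b^2$) force $a\to0$ or $b\to0$. Openness of $\Omega$ follows because the defining functions $\lambda\mapsto\tau(\ker(X_0-\lambda))$ and $\lambda\mapsto\mom_{\pm2}(X_0-\lambda)$ have the appropriate semicontinuity. I will also show $(a,b)(\lambda,y)\to(a,b)(\lambda,0)$ locally uniformly on $\Omega$, using monotonicity in $y$ and the implicit function theorem, which applies since the limit solution is nondegenerate.

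\textbf{Step 2: density formula on $\Omega$.} Passing to the limit in Proposition~\ref{prop:log_symm_subord} gives
\begin{align*}
L_X(\lambda)=\tfrac12\tau\bigl(\log(|X_0-\lambda|^2+a^2)\bigr)+\tfrac12\tau\bigl(\log(|Y|^2+b^2)\bigr)-\log(a+b),
\end{align*}
with $a=a(\lambda,0)$ and $b=b(\lambda,0)$ real-analytic in $\lambda$. The expression is stationary in $(a,b)$, since the system above is precisely $\partial_a=\partial_b=0$. Therefore $\partial_\lambda L_X=-\tfrac12\tau\bigl((X_0-\lambda)^*(|X_0-\lambda|^2+a^2)^{-1}\bigr)$, in the spirit of the Hermitization computation. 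Differentiating once more with $\bar\partial$, and using the implicit derivative of $a$, writes the density as a sum of a manifestly positive trace term of the form $a^2\tau(|R|^2\cdots)$ and a term involving $|\partial_\lambda a|^2$ with a coefficient determined by the Jacobian. The task is then to show, via Cauchy--Schwarz for the trace, that the total is strictly positive. Absolute continuity on $\Omega$ is immediate from smoothness.

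\textbf{Step 3: support.} On the interior of $S\cup F_1\cup F_2$ the limit solution degenerates ($a=0$ or $b=0$). If $a=0$, then $L_X(\lambda)=\tau(\log|X_0-\lambda|)+\mathrm{const}$ is harmonic away from the Brown support of $X_0$; I must check that within $F_1$ and $S$ this really gives zero mass. If $b=0$, $L_X$ is constant in $\lambda$. Hence $\mu_X$ vanishes on the interior of the complement, and since the density is positive on $\Omega$, $\supp\mu_X=\cl{\Omega}$. It remains to show that $\partial\Omega$ carries no mass, which follows from $\mu_X(\Omega)=1$ by total mass conservation. The main obstacle I expect is Step 1 together with the strict positivity in Step 2 in the unbounded, possibly-kernel setting: the boundary asymptotics of $g_1,g_2$ must be matched carefully against the exact thresholds in $S,F_1,F_2$. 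I would first treat $\tau(\ker)=0$ with finite moments, then pass to the general case by monotone limits.
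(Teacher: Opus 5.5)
There is no proof of this statement in the paper to compare with. It is imported from the forthcoming revision of \cite{BZ2022}, and the only piece of that machinery reproduced here is Proposition~\ref{prop:log_symm_subord}, which is exactly your starting point. Judged on its own, your architecture can be completed: find the critical point of $\Phi(a,b)=\tfrac12\tau(\log(|X_0-\lambda|^2+a^2))+\tfrac12\tau(\log(|Y|^2+b^2))-\log(a+b)$, differentiate implicitly, and show that $\mu_X$ vanishes off $\cl{\Omega}$. However, one step is false and the two substantive steps are only announced.

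\textbf{Steps 1 and 2.} Solving the second equation for $b$ is not well defined, because $b\mapsto b\,\tau((b^2+|Y|^2)^{-1})$ is not monotone. Parametrize instead by the first equation, $u=g_1(a)$, $1-u=g_2(b)$. The definition of the $S$-transform gives $g_1(a)^2/a^2=u(1-u)\,S_{\mu_{|X_0-\lambda|^2}}(u-1)$ and $g_2(b)^2/b^2=u(1-u)\,S_{\mu_{|Y|^2}}(-u)$, so the system becomes
\[
S_{\mu_{|X_0-\lambda|^2}}(u-1)=S_{\mu_{|Y|^2}}(-u),\qquad \tau(\ker(X_0-\lambda))<u<1-\tau(\ker Y).
\]
The interval is nonempty exactly when $\lambda\notin S$. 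The two sides are monotone in opposite directions, since $S_\nu'<0$ unless $\nu$ is a point mass. By Lemma~\ref{lem:s_transform_rel} and $S_\nu(0^-)=(\int x\,d\nu)^{-1}$, the endpoint comparisons are exactly the inequalities defining $F_1$ and $F_2$; an $S$-transform tends to $+\infty$ at an endpoint carrying a kernel. This treats kernels and infinite moments directly. That matters, because your fallback of passing to the general case ``by monotone limits'' is not available: Brown measures are not continuous under convergence in $*$-distribution. For $y\downarrow0$, the implicit function theorem plus uniqueness of the subordination pair at $y>0$ suffices; no monotonicity in $y$ is needed.

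In Step~2 positivity is genuinely not manifest. Write $A=X_0-\lambda$, $R=(AA^*+a^2)^{-1}$, $\tilde R=(A^*A+a^2)^{-1}$, $x_1=a^2\tilde R$, $x_2=b^2(|Y|^2+b^2)^{-1}$, $m_j=\tau(x_j)$ and $v_j=\tau(x_j^2)-m_j^2$; at the critical point $m_1+m_2=1$. Then $\partial_{\bar\lambda}\partial_\lambda L_X=\Phi_{\lambda\bar\lambda}-|\Phi_{a\lambda}|^2\Phi_{bb}/\det H$, with:
\begin{itemize}
\item $\Phi_{\lambda\bar\lambda}=\tfrac12a^2\tau(\tilde RR)$ and $\Phi_{a\lambda}=a\,\tau(A^*R^2)$;
\item $a^2b^2\det H=4v_1v_2-2m_1m_2(v_1+v_2)<0$, so the critical point is a saddle;
\item $b^2\Phi_{bb}=m_1m_2-2v_2$, which is negative when $|Y|$ is widely spread.
\end{itemize}
So the $|\partial_\lambda a|^2$ term can be negative. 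Your Cauchy--Schwarz idea does close this. The bound $|\tau(A^*R^2)|^2\le\tau(\tilde R-a^2\tilde R^2)\,\tau(\tilde RR)$ reads $|\Phi_{a\lambda}|^2\le2a^{-2}(m_1m_2-v_1)\Phi_{\lambda\bar\lambda}$. When $\Phi_{bb}<0$ you are then left with $a^2b^2\bigl(|\det H|+2a^{-2}(m_1m_2-v_1)\Phi_{bb}\bigr)=2m_1m_2(m_1m_2-v_2)>0$, since $v_2<m_2(1-m_2)$ for $Y\neq0$. This computation is the actual content of the positivity claim and needs to be written out.

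\textbf{Step 3.} Your last sentence is false: $\mu_X(\partial\Omega)$ need not vanish, and ``$\mu_X(\Omega)=1$'' is what you would have to prove. Suppose $\tau(\ker(X_0-\lambda_0))+\tau(\ker Y)>1$. Then $\ker(X-\lambda_0)\ge\ker(X_0-\lambda_0)\wedge\ker Y\neq0$, so $\mu_X$ has an atom at $\lambda_0$. Yet a punctured neighbourhood of $\lambda_0$ lies in $\Omega$: $S$ is finite, $\tau(\ker Y)>0$ forces $F_2=\emptyset$, and $F_1$ is closed and misses $\lambda_0$. Hence $\lambda_0\in\partial\Omega$.

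The claim is also unnecessary. $\supp(\mu_X)=\cl{\Omega}$ only requires $\mu_X=0$ on $\CC\setminus\cl{\Omega}$, and this set lies in $\mathrm{int}(F_1\cup F_2)$ because the same argument gives $S\subseteq\cl{\Omega}$. There you can bypass the boundary behaviour of the subordination functions:
\begin{itemize}
\item For $\lambda\in F_1$, write $X-\lambda=(X_0-\lambda)(1+Z)$ with $Z=(X_0-\lambda)^{-1}Y$, which is $R$-diagonal with $\mom_2(Z)=\mom_{-2}(X_0-\lambda)\mom_2(Y)\le1$. Theorem~\ref{thm:char_r_supp_spec}, Jensen's formula and multiplicativity of $\Delta$ then give $L_X=L_{X_0}$.
\item Symmetrically, $X-\lambda=Y(1+Y^{-1}(X_0-\lambda))$ gives $L_X\equiv\tau(\log|Y|)$ on $F_2$.
\end{itemize}
You still need $\mu_{X_0}(\mathrm{int}\,F_1)=0$. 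This holds because $\|(X_0-\lambda)^{-1}\|_2^2\le\mom_2(Y)^{-1}$ on $F_1$, so the resolvent identity makes $\lambda\mapsto\tau((X_0-\lambda)^{-1})=-2\partial_\lambda L_{X_0}$ holomorphic there. The remaining points of $\mathrm{int}(F_1\cup F_2)$ lie in $F_1\cap F_2$. There $\mom_2\mom_{-2}\ge1$ forces $|Y|$ and $|X_0-\lambda|$ to be constant, so $F_1\cap F_2$ is a finite set on which $L_X$ is finite, and it carries no mass.
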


In the infinitely divisible case, the characterization from Theorem \ref{thm:rid_char} gives additional control of the quantities that define the region $\Omega$. This allows us to derive results on support and regularity from Theorem \ref{thm:positivity}.

\subsection{Coincidence of the support and the spectrum}

We begin with the negative second moment $\mom_{-2}(X)$ and then prove the coincidence of the support and the spectrum for bounded freely infinitely divisible $R$-diagonal elements.

\begin{proposition}
\label{prop:l-2_infty}
Let $T$ be a freely infinitely divisible $R$-diagonal element. Then
\begin{align*}
\mom_{-2}(T)=\int_0^\infty\frac{1}{x}\di{\mu_{|T|^2}}(x)=\infty.
\end{align*}
\end{proposition}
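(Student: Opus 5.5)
The plan is to reduce the claim to a boundary computation for the $S$-transform, using the description of $\mu_{|T|^2}$ from Theorem~\ref{thm:rid_char} and then Lemma~\ref{lem:s_transform_rel}. The Marchenko--Pastur factor $\Pi_1$ has $S$-transform $1/(1+u)$, which blows up at $u=-1$. This factor should force the negative first moment of $\mu_{|T|^2}$ to be infinite.

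First I dispose of the degenerate cases. If $\tau(\ker T)>0$, then $\mu_{|T|^2}(\{0\})>0$, and by the convention fixed in Section~\ref{sec:preliminaries} we have $\mom_{-2}(T)=+\infty$. This covers in particular $T=0$. So I assume $\mu_{|T|^2}(\{0\})=0$.

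By Theorem~\ref{thm:rid_char}(3) there is a probability measure $\sigma$ on $[0,\infty)$ with
\begin{align*}
\mu_{|T|^2}=\BBP(\sigma)\boxtimes\Pi_1 .
\end{align*}
I then use the standard fact that the atom at $0$ of a free multiplicative convolution of measures on $[0,\infty)$ is $\max$ of the two atoms at $0$. Since $\mu_{|T|^2}(\{0\})=0$, this gives $\nu\coloneqq\BBP(\sigma)$ with $\nu(\{0\})=0$. Hence $S_\nu$ and $S_{\Pi_1}$ are both defined on all of $(-1,0)$. Multiplicativity of the $S$-transform (valid for measures on $[0,\infty)$ without compact support, by Bercovici--Voiculescu) then gives
\begin{align*}
S_{\mu_{|T|^2}}(u)=S_\nu(u)\,\frac{1}{1+u},\qquad -1<u<0 .
\end{align*}

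Next I need a positive lower bound for $S_\nu$ near $u=-1$. The function $S_\nu$ is positive and decreasing on $(-1,0)$; monotonicity of $S$ is recorded in Theorem~\ref{thm:char_r_supp_spec}, or follows directly from $\psi_\nu$ being increasing. Therefore $S_\nu(u)\ge S_\nu(-1/2)>0$ for $u\in(-1,-1/2]$. Indeed, by Lemma~\ref{lem:s_transform_rel} the limit of $S_\nu(u)$ as $u\to-1$ exists in $(0,\infty]$. Consequently $S_{\mu_{|T|^2}}(u)\to\infty$ as $u\to-1$. Applying Lemma~\ref{lem:s_transform_rel} to $\mu_{|T|^2}$, which has no atom at $0$, yields
\begin{align*}
\int_0^\infty x^{-1}\di{\mu_{|T|^2}}(x)=\lim_{u\to-1}S_{\mu_{|T|^2}}(u)=\infty .
\end{align*}

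The main point to check is the step concerning atoms and the domain of the $S$-transform in the possibly unbounded setting. Specifically, I must confirm that $\nu(\{0\})=0$ follows from $\mu_{|T|^2}(\{0\})=0$, and that the multiplicativity identity holds on all of $(-1,0)$ rather than only near $0$. If the atom formula is inconvenient, an alternative is to argue via Proposition~\ref{prop:radial-voiculescu}. With $\widetilde{\mu}_{|T|}(\{0\})=0$, the inner radius $\lim_{q\to q_0} r(q)$ equals $\mom_{-2}(T)^{-1/2}$. One would then show that $q^2\theta(q)(1-\theta(q))\to0$, which holds since $q\theta(q)\to 0$ and $q\to q_0$ stays bounded; if $q_0=0$, one instead uses $\theta\to0$ together with $q\to0$. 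This gives the same conclusion.
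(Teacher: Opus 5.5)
Your proposal is correct and follows the paper's proof essentially step by step. Both arguments reduce to $\tau(\ker T)=0$, write $\mu_{|T|^2}=\nu\boxtimes\Pi_1$ with $\nu(\{0\})=0$, and combine $S_{\mu_{|T|^2}}(u)=S_\nu(u)/(1+u)$ with Lemma~\ref{lem:s_transform_rel}; your added justifications (the atom at zero of $\nu$, the lower bound $S_\nu(u)\ge S_\nu(-1/2)$, and multiplicativity on all of $(-1,0)$) fill in points the paper leaves implicit, and your fallback via Proposition~\ref{prop:radial-voiculescu} is unnecessary and would in any case require $T\in\log^+(\MM)$, which the statement does not assume.
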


\begin{proof}
If $\tau(\ker T)>0$, then the assertion follows from the definition of $\mom_{-2}(T)$. Thus we may assume that $\mu_{|T|^2}(\{0\})=0$. By Theorem \ref{thm:rid_char}, there exists a probability measure
$\nu\in\PP^+$ such that $\mu_{|T|^2}=\nu\boxtimes\Pi_1$. Then $\nu(\{0\})=0$. By Lemma \ref{lem:s_transform_rel}, it is enough to show that $\lim_{u\to-1}S_{\mu_{|T|^2}}(u)=\infty$. Indeed, by the multiplicativity of the $S$-transform, we have
\begin{align*}
\lim_{u\to-1}S_{\mu_{|T|^2}}(u)=\lim_{u\to-1}S_\nu(u)S_{\Pi_1}(u)=\lim_{u\to-1}\frac{S_\nu(u)}{1+u}=\infty,
\end{align*}
where we again used Lemma \ref{lem:s_transform_rel} to see that $\lim_{u\to-1}S_\nu(u)\in(0,\infty]$.
\end{proof}

Combining the above result with Theorem \ref{thm:char_r_supp_spec}, we deduce that the support of the Brown measure coincides with the spectrum for bounded elements.

\begin{corollary}
\label{cor:supp_spec}
Let $X$ be a bounded freely infinitely divisible $R$-diagonal element. Then
\begin{align*}
\supp(\mu_X)=\spec(X).
\end{align*}
\end{corollary}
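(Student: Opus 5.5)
We derive the corollary from Theorem~\ref{thm:char_r_supp_spec} and Proposition~\ref{prop:l-2_infty}. First we dispose of degenerate cases. If $X=0$, then $\mu_X=\delta_0$ and $\spec(X)=\{0\}$, so there is nothing to prove. If $\mu_{|X|^2}$ is a Dirac measure $\delta_a$ with $a>0$, then $\widetilde{\mu}_{|X|}=\frac12(\delta_{\sqrt a}+\delta_{-\sqrt a})$. This measure is not freely infinitely divisible (as in the Remark after Example~\ref{ex:x_mk}), so by Theorem~\ref{thm:rid_char} this case is excluded. We may therefore assume that $\mu_{|X|^2}$ is not a Dirac measure. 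A bounded operator lies in $\log^+(\MM)$, so the hypotheses of Theorem~\ref{thm:char_r_supp_spec} are satisfied.

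By the last part of Theorem~\ref{thm:char_r_supp_spec}, we have $\supp(\mu_X)\subsetneq\spec(X)$ if and only if condition (1) or condition (2) there holds. Condition (2) requires $X$ to be unbounded, so it fails. Condition (1) requires $\mom_{-2}(X)<\infty$. Proposition~\ref{prop:l-2_infty} gives $\mom_{-2}(X)=\infty$, so condition (1) fails as well. Hence the support is not strictly contained in the spectrum.

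To conclude equality, we also need $\supp(\mu_X)\subseteq\spec(X)$. This is the standard fact that the Brown measure of a bounded operator is supported in its spectrum, since $L_X$ is harmonic off $\spec(X)$. Alternatively, one can read the "iff" statement as a comparison in which the inclusion $\supp\subseteq\spec$ is always valid. Combining this inclusion with the failure of strict inclusion gives $\supp(\mu_X)=\spec(X)$.

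For an explicit description, Theorem~\ref{thm:char_r_supp_spec} together with $\mom_{-2}(X)=\infty$ gives $\supp(\mu_X)=\{|\lambda|\le \mom_2(X)^{1/2}\}$, a closed disk. The only delicate point in the argument is the exclusion of the Dirac case, which we handle through Theorem~\ref{thm:rid_char}.
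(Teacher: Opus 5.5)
Your proof is correct and follows the paper's argument, which likewise just combines $\mom_{-2}(X)=\infty$ from Proposition~\ref{prop:l-2_infty} with the strict-inclusion criterion of Theorem~\ref{thm:char_r_supp_spec}. Your explicit exclusion of the Dirac case via Theorem~\ref{thm:rid_char}, and your remark that $\supp(\mu_X)\subseteq\spec(X)$ always holds for bounded $X$, supply details the paper leaves implicit.
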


The corresponding statement is false for unbounded operators. Indeed, even if the inner hole disappears, it is not necessarily true that $\supp(\mu_T)=\spec(T)$ for unbounded $T$. For example, if $\|T\|_2<\infty$ and $T$ is unbounded, then $|\lambda|>\|T\|_2$ can belong to $\spec(T)$, so by Theorem \ref{thm:char_r_supp_spec},  $\supp(\mu_T)\subsetneq\spec(T)$ can occur.

\begin{remark}
By Proposition~\ref{prop:l-2_infty}, $c^{-1}$ is not freely infinitely divisible since $\mom_{-2}(c^{-1})=\|c\|_2^2<\infty$.
\end{remark}

\subsection{Property (H)}

We formulate the regularization property (H) in our setting. A motivating example is provided by the circular Cauchy perturbation in \cite{HS2007Brown,HS2009inv}. More precisely, if $c_1$ and $c_2$ are $*$-free circular elements, then the Brown measure of $T+c_1c_2^{-1}$ has a strictly positive density on $\CC$; see \cite[Corollary~4.6]{HS2009inv}. This model was used as a regularizing device in the argument for invariant subspaces in \cite{HS2009inv}, and the same density formula is recovered in \cite[Example~4.5]{BZ2022}. The property considered below is a natural abstraction of this regularizing phenomenon, and the self-adjoint counterpart of property (H) was introduced and studied in \cite{BBgG2009regularization,HW2022Reg}.

\begin{definition}
Let $Y\in\log^+(\MM)$ be an $R$-diagonal element. We say that $Y$ satisfies property (H) if for every operator $X_0\in \log^+(\MM)$ that is $*$-free from $Y$, the Brown measure of $X_0+Y$ is absolutely continuous with respect to two-dimensional Lebesgue measure and has strictly positive density on $\CC$.
\end{definition}

To use Theorem \ref{thm:positivity}, it remains to determine when the open set $\Omega$ is all of $\CC$.

\begin{proposition}
\label{prop:omega_c}
Let $Y\in\log^+(\MM)$ be a freely infinitely divisible $R$-diagonal element such that $\tau(\ker Y)=0$ and $\mom_2(Y)=\infty$. Let $X_0\in \log^+(\MM)$ be $*$-free from $Y$. Then the Brown measure of $X_0+Y$ has strictly positive density on $\CC$.
\end{proposition}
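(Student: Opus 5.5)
The plan is to apply Theorem~\ref{thm:positivity} with $X_0$ and $Y$, and to check that each of the three exceptional sets $S$, $F_1$, $F_2$ is empty, so that $\Omega=\CC$. Theorem~\ref{thm:positivity} then gives absolute continuity and a strictly positive density on all of $\CC$. Two preliminary remarks:
\begin{itemize}
\item The hypotheses $\tau(\ker Y)=0$ and $\mom_2(Y)=\infty$ force $Y\notin\CC$.
\item Theorem~\ref{thm:positivity} requires $X_0\notin\CC$, so I first treat non-scalar $X_0$ and then reduce the scalar case to it.
\end{itemize}

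\emph{Non-scalar $X_0$.} The three sets are handled as follows.
\begin{itemize}
\item $F_1$: since $\mom_2(Y)=\infty$, the inequality $\mom_2(Y)\le 1/\mom_{-2}(X_0-\lambda)$ would require $1/\mom_{-2}(X_0-\lambda)=\infty$. That is impossible because $\mom_{-2}(X_0-\lambda)>0$. So $F_1=\emptyset$.
\item $F_2$: Proposition~\ref{prop:l-2_infty} gives $\mom_{-2}(Y)=\infty$, so $1/\mom_{-2}(Y)=0$. Membership in $F_2$ would then mean $\mom_2(X_0-\lambda)\le0$, i.e.\ $X_0=\lambda$, which is excluded. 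So $F_2=\emptyset$.
\item $S$: since $\tau(\ker Y)=0$, membership would require $\tau(\ker(X_0-\lambda))\ge1$, i.e.\ $X_0=\lambda$, again excluded. So $S=\emptyset$.
\end{itemize}
Hence $\Omega=\CC$ and the claim follows.

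\emph{Scalar $X_0=\lambda_0$.} Here I use infinite divisibility to split $Y$. Choose $*$-free, identically distributed, freely infinitely divisible $R$-diagonal elements $Y_1,Y_2$ with $Y\stard Y_1+Y_2$ and $\widetilde{\mu}_{|Y_i|}=\widetilde{\mu}_{|Y|}^{\boxplus1/2}$; this is possible by Theorem~\ref{thm:rid_char} and Theorem~\ref{thm:symmetrization}. Then $X_0+Y\stard X_0'+Y_2$ with $X_0'=\lambda_0+Y_1$, and $X_0'$ is non-scalar. Since Brown measures depend only on the $*$-distribution, it suffices to rerun the argument above for the pair $(X_0',Y_2)$.
\begin{itemize}
\item $F_2$ is empty by Proposition~\ref{prop:l-2_infty} applied to $Y_2$.
\item $F_1$ is empty provided $\mom_2(Y_2)=\infty$. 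This holds because the second moment of $\widetilde{\mu}_{|Y|}^{\boxplus1/2}$ is half the (infinite) variance of $\widetilde{\mu}_{|Y|}$; formally, finiteness of the second moment of $\nu^{\boxplus1/2}$ would force finiteness for $\nu=\nu^{\boxplus1/2}\boxplus\nu^{\boxplus1/2}$.
\item $S$ is empty provided $\tau(\ker(Y_1+\lambda_0-\lambda))+\tau(\ker Y_2)<1$ for every $\lambda$.
\end{itemize}

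The main obstacle is this last atom estimate for $S$. I would first use the atom rule for free convolution. If $\widetilde{\mu}_{|Y_2|}(\{0\})=p$, then $\widetilde{\mu}_{|Y|}=\widetilde{\mu}_{|Y_2|}^{\boxplus2}$ has an atom at $0$ of mass $2p-1$ whenever $p>1/2$. Together with $\tau(\ker Y)=0$, this gives $\tau(\ker Y_2)=\tau(\ker Y_1)\le1/2$. For $c\ne0$, the Brown measure of the $R$-diagonal element $Y_1$ is rotation invariant, with $\mu_{Y_1}(\{c\})=0$; this should force $\tau(\ker(Y_1-c))=0$, since a kernel of $Y_1-c$ of positive trace would produce Brown mass at $c$. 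So the sum is at most $1/2<1$ when $\lambda\ne\lambda_0$. For $\lambda=\lambda_0$ the sum is $2\tau(\ker Y_2)$, which is $<1$ unless $p=1/2$ exactly. That borderline case would need a finer argument, for instance splitting into three or more pieces, $Y\stard Y_1+\cdots+Y_n$ with $n\ge3$ and grouping $X_0'=\lambda_0+Y_1+\cdots+Y_{n-1}$, so that the kernel masses become strictly smaller.

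If this bookkeeping becomes cumbersome, I would fall back on the explicit rotationally invariant description. For scalar $X_0$, the Brown measure of $\lambda_0+Y$ is the translate of $\mu_Y$, and its support is all of $\CC$ by Theorem~\ref{thm:char_r_supp_spec}, since $\mom_{-2}(Y)=\mom_2(Y)=\infty$. Positivity of the density would then come from the radial parametrization in Proposition~\ref{prop:radial-voiculescu}, by showing that $q\mapsto r(q)$ is a real-analytic increasing bijection onto $(0,\infty)$ and that $\theta$ increases strictly along it.
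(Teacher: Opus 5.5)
Your argument for non-scalar $X_0$ is exactly the paper's. Your fallback for scalar $X_0$ is also exactly how the paper handles that case: view $\mu_{\lambda_0+Y}$ as a translate of $\mu_Y$, use $\mom_{-2}(Y)=\infty$ from Proposition~\ref{prop:l-2_infty} together with $\mom_2(Y)=\infty$ and $\tau(\ker Y)=0$, and apply Theorem~\ref{thm:char_r_supp_spec}. So that fallback should be your primary argument for scalar $X_0$. Using $\theta'>0$ from Proposition~\ref{prop:radial-voiculescu} is a sensible way to make explicit the positivity of the radial density, which the paper leaves implicit.

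You should discard the splitting detour, because its repair of the borderline case $p=1/2$ is false. Splitting into $n$ pieces and grouping $n-1$ of them is still a two-piece split, with laws $\nu^{\boxplus s}$ and $\nu^{\boxplus(1-s)}$, where $\nu=\widetilde{\mu}_{|Y|}$. The atom rule for $\boxplus$-powers only gives $\nu^{\boxplus s}(\{0\})\le 1-s$, and this bound gets weaker, not stronger, as the pieces shrink.

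Here is a concrete counterexample. Let $\nu$ be a symmetric free compound Poisson law of rate $1$ whose jump law has no atom at $0$, a finite logarithmic moment, and infinite second moment. Then every hypothesis of the proposition holds. Moreover $\nu^{\boxplus s}(\{0\})=1-s$ for every $s\in(0,1)$, which you can read off from the function $\theta$ in Proposition~\ref{prop:radial-voiculescu}. Hence the two kernel traces sum to exactly $1$, and $\lambda_0\in S$ for every such split.

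The splitting route can be rescued, but only by a different observation. In the borderline case $S=\{\lambda_0\}$, so Theorem~\ref{thm:positivity} gives a strictly positive density on $\CC\setminus\{\lambda_0\}$. The remaining single point carries no mass, since $\mu_{\lambda_0+Y}(\{\lambda_0\})=\tau(\ker Y)=0$ by Theorem~\ref{thm:char_r_supp_spec}.
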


\begin{proof}
If $X_0\in\CC$, then $X_0+Y$ is a translate of $Y$, and the statement follows from Proposition~\ref{prop:l-2_infty} and Theorem~\ref{thm:char_r_supp_spec}, so we may assume that $X_0\in\log^+(\MM)\setminus\CC$. Since $Y$ is freely infinitely divisible and $R$-diagonal, Proposition \ref{prop:l-2_infty} gives $\mom_{-2}(Y)=\infty$, and thus
\begin{align*}
F_2=\left\{\lambda\in\CC\colon \mom_2(X_0-\lambda)\le 0\right\}=\emptyset,
\end{align*}
because $X_0$ is not a scalar multiple of the identity. Moreover, since $\mom_2(Y)=\infty$, we have $F_1=\emptyset$.

Finally, since $\tau(\ker Y)=0$, we have
\begin{align*}
S=\{\lambda\in\CC\colon \tau(\ker(X_0-\lambda))\ge 1\}=\emptyset,
\end{align*}
again because $X_0$ is not scalar. Hence $\Omega=\CC$, and Theorem \ref{thm:positivity} gives the conclusion.
\end{proof}

This yields the following characterization of property (H) in our setting.

\begin{theorem}
\label{thm:property_H}
Let $Y\in\log^+(\MM)$ be a freely infinitely divisible $R$-diagonal element. Then the following are equivalent:
\begin{enumerate}
\item $Y$ satisfies property (H);
\item $\tau(\ker Y)=0$ and $\mom_2(Y)=\infty$.
\end{enumerate}
\end{theorem}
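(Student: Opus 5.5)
The implication (2)$\Rightarrow$(1) is exactly Proposition~\ref{prop:omega_c}, modulo absolute continuity. Proposition~\ref{prop:omega_c} shows that $\Omega=\CC$ when $X_0$ is not scalar. Theorem~\ref{thm:positivity} then gives that $\mu_{X_0+Y}$ is absolutely continuous on $\Omega=\CC$ with strictly positive density. When $X_0=a\in\CC$, the measure $\mu_{X_0+Y}$ is the translate of $\mu_Y$. Here Theorem~\ref{thm:char_r_supp_spec} applies, because $\mom_2(Y)=\infty$ and $\mom_{-2}(Y)=\infty$ (Proposition~\ref{prop:l-2_infty}) exclude the Dirac case. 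That theorem gives $\mu_Y(\{0\})=0$ and the radial distribution $1+S^{\langle-1\rangle}_{\mu_{|Y|^2}}(r^{-2})$ on $(0,\infty)$. This function is strictly increasing in $r$ and real-analytic, since the $S$-transform is analytic with nonvanishing derivative. Hence $\mu_Y$ has a strictly positive density, and the same holds for its translate.

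For (1)$\Rightarrow$(2) we argue by contrapositive, testing property (H) against a well-chosen $X_0$. If $\tau(\ker Y)>0$, take $X_0=0$. By Theorem~\ref{thm:char_r_supp_spec}, $\mu_Y(\{0\})=\mu_{|Y|^2}(\{0\})>0$, so $\mu_Y$ has an atom and is not absolutely continuous. The excluded case where $\mu_{|Y|^2}$ is a Dirac mass needs separate treatment. It forces $\mu_{|Y|^2}=\delta_0$, i.e.\ $Y=0$, and then $\mu_Y=\delta_0$ fails (H) trivially.

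If $\tau(\ker Y)=0$ but $\mom_2(Y)<\infty$, again take $X_0=0$. Proposition~\ref{prop:l-2_infty} gives $\mom_{-2}(Y)=\infty$, so by Theorem~\ref{thm:char_r_supp_spec} we get $\supp(\mu_Y)=\{|\lambda|\le\mom_2(Y)^{1/2}\}$. This is a bounded disc, so the density vanishes outside it and is not strictly positive on $\CC$. The Dirac case is again excluded: since $\mu_{|Y|^2}$ has no atom at $0$, it could only be $\delta_c$ with $c>0$. That would make $\widetilde{\mu}_{|Y|}$ a two-point symmetric law, which is not freely infinitely divisible by the Remark following Example~\ref{ex:x_mk}, contradicting Theorem~\ref{thm:rid_char}.

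I expect the main obstacle to be the absolute continuity part of the scalar case $X_0\in\CC$ in (2)$\Rightarrow$(1), since Theorem~\ref{thm:positivity} assumes $X_0\notin\CC$. I would handle it through the explicit radial formula above, computing the density as $\frac{1}{2\pi r}\frac{d}{dr}\bigl(1+S^{\langle-1\rangle}(r^{-2})\bigr)$. Alternatively, one can use the parametrization of Proposition~\ref{prop:radial-voiculescu} and check that $r(q)$ is a smooth increasing bijection onto $(0,\infty)$.
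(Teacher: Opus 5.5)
Your proposal is correct and follows the paper's proof. For (2)$\Rightarrow$(1) you use Proposition~\ref{prop:omega_c}, i.e.\ $\Omega=\CC$ in Theorem~\ref{thm:positivity}, with the scalar case read off from Theorem~\ref{thm:char_r_supp_spec}; for (1)$\Rightarrow$(2) you test with $X_0=0$ and use the atom $\mu_Y(\{0\})=\tau(\ker Y)$ and the outer radius $\mom_2(Y)^{1/2}$. Your extra care with the Dirac exclusions and with $S'_{\mu_{|Y|^2}}<0$ in the scalar case fills in details the paper leaves implicit.
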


\begin{proof}
The implication (2) $\Rightarrow$ (1) follows from Proposition~\ref{prop:omega_c}. Conversely, assume that $Y$ satisfies property (H). Taking $X_0=0$, we see that $\mu_Y$ is absolutely continuous with respect to two-dimensional Lebesgue measure and has strictly positive density on $\CC$. In particular, $\mu_Y$ has no atoms and cannot have bounded support. Since $Y$ is $R$-diagonal, Theorem~\ref{thm:char_r_supp_spec} implies that $\tau(\ker Y)=0$ and $\mom_2(Y)=\infty$.
\end{proof}

\begin{example}[Integrability for $X_{m,k}$]
\label{ex:property_H_Xmk}
We return to the family $X_{m,k}$ from Example \ref{ex:x_mk}. Recall that
\begin{align*}
X_{m,k}=\left(\prod_{i=1}^m c_i\right)\left(\prod_{j=1}^k c_{m+j}\right)^{-1},\quad m\ge1,\quad k\ge0,
\end{align*}
where $(c_i)_{i\ge1}$ is a $*$-free circular family. From the description of $\mu_{|X_{m,k}|^2}$ in Example \ref{ex:x_mk} and Theorem \ref{thm:property_H}, we immediately get
\begin{align}
X_{m,k}\text{ satisfies property (H)}\quad\Longleftrightarrow\quad k\ge1.
\label{eq:property-H-Xmk}
\end{align}
Indeed, for $k\ge1$, $X_{m,k}$ has trivial kernel and the decomposition $\mu_{|X_{m,k}|^2}=f_{1/(k+1),1}\boxtimes\Pi_1^{\boxtimes m}$ forces $\mom_2(X_{m,k})=\infty$.

We record the sharper integrability threshold for $k>0$:
\begin{align}
X_{m,k}\in L^p(\MM,\tau)\quad\Longleftrightarrow\quad 0<p<\frac{2}{k+1}.
\label{eq:Lp-Xmk}
\end{align}
We follow the same strategy as in \cite[Theorem 5.4]{HS2007Brown}. For $s>0$, set $h(s)=s\tau((s^2+|X_{m,k}|^{-2})^{-1})$ and $w(s)=sh(s)$. Moreover, $w(s)=-\psi_{\mu_{|X_{m,k}|^2}}(-s^2)$, and thus the definition of the $S$-transform gives
\begin{align*}
S_{\mu_{|X_{m,k}|^2}}(-w(s))=\frac{s^2(1-w(s))}{w(s)}.
\end{align*}
Using \eqref{eq:S-Xmk}, we obtain
\begin{align}
s^2=\frac{w(s)^{k+1}}{(1-w(s))^{m+1}}.
\label{eq:w-s-Xmk}
\end{align}
Thus $w(s)$ increases from $0$ to $1$ as $s$ increases from $0$ to $\infty$, and
\begin{align*}
s=w^{(k+1)/2}(1-w)^{-(m+1)/2}.
\end{align*}
By \cite[Lemma 5.3]{HS2007Brown}, applied to $\mu_{|X_{m,k}|^{-1}}$, for $0<p<2$,
\begin{align*}
\tau(|X_{m,k}|^p)=\int_0^\infty u^{-p}\di{\mu_{|X_{m,k}|^{-1}}}(u)=\frac{2}{\pi}\sin\left(\frac{\pi p}{2}\right)\int_0^\infty s^{-p}h(s)\di{s},
\end{align*}
where the equality is understood in $[0,\infty]$. Using $h(s)=w(s)/s$ and the logarithmic derivative of \eqref{eq:w-s-Xmk}, we get
\begin{align*}
\int_0^\infty s^{-p}h(s)\di{s}&=\frac{k+1}{2}\int_0^1 w^{-(k+1)p/2}(1-w)^{(m+1)p/2}\di{w}
+\frac{m+1}{2}\int_0^1 w^{1-(k+1)p/2}(1-w)^{(m+1)p/2-1}\di{w}.
\end{align*}
The endpoint $w=1$ causes no divergence for $p>0$, while the endpoint $w=0$ is integrable precisely when $p<2/(k+1)$. Together with the elementary monotonicity of $L^p$-integrability, this proves \eqref{eq:Lp-Xmk}. In this case,
\begin{align}
\int_0^\infty s^{-p}h(s)\di{s}
=\frac{k+1}{2}B\left(1-\frac{(k+1)p}{2},1+\frac{(m+1)p}{2}\right)+\frac{m+1}{2}B\left(2-\frac{(k+1)p}{2},\frac{(m+1)p}{2}\right),
\label{eq:beta-integral-Xmk}
\end{align}
where $B$ denotes the beta function.
\end{example}

\begin{remark}
Let $z=c_1c_2^{-1}$ be the circular Cauchy element. Then $z^n$ and $X_{n,n}$ have the same $*$-distribution. Applying \eqref{eq:Lp-Xmk} with $(m,k)=(n,n)$ gives $z^n\in L^p(\MM,\tau)\Longleftrightarrow 0<p<2/(n+1)$. Thus, when $m=k=n$, the above setting reduces to that obtained in \cite[Theorem 5.4]{HS2007Brown}.
\end{remark}

\section{Regularity of the semigroup}
\label{sec:semigroup-regularity}
In the previous sections, free infinite divisibility was used mainly in a static manner. It provided structural properties of $R$-diagonal elements and allowed us to obtain support and regularity results for the corresponding Brown measures. In this section, we use it dynamically, through the free convolution semigroup associated with the symmetrized law of the modulus. More precisely, if $\mu=\widetilde{\mu}_{|Y|}$ is freely infinitely divisible, we consider the family of $R$-diagonal elements $(Y_t)_{t>0}$ determined by
\begin{align*}
\widetilde{\mu}_{|Y_t|}=\mu^{\boxplus t}.
\end{align*}
We then study the evolution in $t$ of the regularized logarithmic potential and the Fuglede--Kadison determinant of $X_0+Y_t$. This dynamical viewpoint leads to a Hamilton--Jacobi equation for the regularized logarithmic potential and yields regularity results for the Brown measures, extending phenomena previously observed in the circular and circular Cauchy cases. For this $R$-diagonal semigroup, we also derive a first-order PDE for the radial distribution function of the Brown measure of $Y_t$, closely related to the equation appearing in \cite{COR2024fractional}.

\subsection{Integrability of the semigroup}

We prove a criterion for log-integrability, which ensures that the semigroup remains log-integrable for all $t>0$. This is needed in order to work with Fuglede--Kadison determinants of possibly unbounded operators. The criterion also gives characterizations equivalent to those considered in \cite[Proposition 6.4]{BnT2002selfdecomp}. It is stated for general freely infinitely divisible probability measures, though our main application is to symmetric distributions. Once this criterion is established, it follows that the Hamiltonian appearing in the next subsection is well defined.

\begin{theorem}
\label{thm:log_integrability_semigroup}
Let $\mu$ be freely infinitely divisible, and let $(\gamma,\sigma)$ be its free generating pair, so that
\begin{align*}
\phi_\mu(z)=\gamma+\int_{\RR}\frac{1+xz}{z-x}\di{\sigma}(x),\quad z\in\CC^+.
\end{align*}
Let $T>0$. Then the following conditions are equivalent:
\begin{enumerate}
\item $\int_{\RR}\log(1+x^2)\di{\mu}(x)<\infty$.
\item $\int_{\RR}\log(1+x^2)\di{\sigma}(x)<\infty$.
\item $\int_T^\infty\frac{-\Im\phi_\mu(\iu y)}{y^2}\di{y}<\infty$.
\item $\int_0^{1/T} \Im R_\mu(\iu y)\di{y}<\infty$.
\item $\int_T^\infty\frac{\Im F_\mu(\iu y)-y}{y^2}\di{y}<\infty$.
\end{enumerate}
\end{theorem}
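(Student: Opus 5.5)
Following the order of the list, I would link (2), (3) and (4) by direct computation from the Lévy--Khintchine form of Proposition~\ref{prop:free_LH}, link (3) and (5) through the global inversion of Theorem~\ref{thm:global_inversion}, and close the loop with (1) by a Tauberian-type estimate for Cauchy transforms.

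\emph{Step 1: (2)$\iff$(3)$\iff$(4).} From the representation,
\begin{align*}
-\Im\phi_\mu(\iu y)=\int_{\RR}\frac{y(1+x^2)}{y^2+x^2}\di{\sigma}(x).
\end{align*}
By Tonelli,
\begin{align*}
\int_T^\infty\frac{-\Im\phi_\mu(\iu y)}{y^2}\di{y}=\int_{\RR}(1+x^2)\int_T^\infty\frac{\di{y}}{y(y^2+x^2)}\di{\sigma}(x).
\end{align*}
The inner integral equals $\frac{1}{2x^2}\log(1+x^2/T^2)$. Hence the integrand is comparable, uniformly in $x$, to $\log(1+x^2)$ for large $|x|$ and to a bounded constant for small $|x|$. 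This gives (2)$\iff$(3). Since $R_\mu(w)=\phi_\mu(1/w)$ and the substitution $y\mapsto 1/y$ maps $(T,\infty)$ to $(0,1/T)$ with $\di{y}/y^2$ becoming $\di{y}$, condition (3) is literally (4), up to the sign convention for $\Im R_\mu(\iu y)$. That convention comes from the conjugate extension to $\CC^-$, where $\Im R_\mu(\iu y)=-\Im\phi_\mu(-\iu/y)$ is nonnegative.

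\emph{Step 2: (3)$\iff$(5).} Apply Theorem~\ref{thm:global_inversion} with $X_1=0$ formally, or use the identity $F_\mu(z)+\phi_\mu(F_\mu(z))=z$ valid on all of $\CC^+$ for freely infinitely divisible $\mu$. This gives
\begin{align*}
\Im F_\mu(\iu y)-y=-\Im\phi_\mu(F_\mu(\iu y))\ge0.
\end{align*}
Since $F_\mu(\iu y)=\iu y(1+o(1))$ nontangentially, and $-\Im\phi_\mu(x+\iu v)$ is comparable to $-\Im\phi_\mu(\iu v)$ for $|x|=o(v)$ (the Poisson-type kernel $v/((x-s)^2+v^2)$ compares to $v/(s^2+v^2)$ up to constants once $v\ge 2|x|$ and $v$ is large), the two integrands in (3) and (5) are comparable for $y\ge T'$. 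Finite pieces $[T,T']$ are harmless because both integrands are bounded there.

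\emph{Step 3: (1)$\iff$(5).} Write
\begin{align*}
\Im F_\mu(\iu y)-y=-\frac{\Im G_\mu(\iu y)}{|G_\mu(\iu y)|^2}-y .
\end{align*}
Here $-\Im G_\mu(\iu y)=\int\frac{y}{y^2+x^2}\di{\mu}$, and $|G_\mu(\iu y)|^2=(\Im G)^2+(\Re G)^2$ with $\Re G=-\int\frac{x}{y^2+x^2}\di{\mu}$. Expanding, $y(\Im F-y)$ is, up to lower-order terms, comparable to
\begin{align*}
\int\frac{x^2}{x^2+y^2}\di{\mu}(x)+O\bigl((\Re(yG))^2\bigr).
\end{align*}
The first term integrated against $\di{y}/y$ over $(T,\infty)$ gives $\frac12\int\log(1+x^2/T^2)\di{\mu}$, which matches (1). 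The main obstacle is controlling the cross term $(\Re (yG_\mu(\iu y)))^2$, which is signed information and could in principle create cancellation. I would first try to handle it via Cauchy--Schwarz: $(\int\frac{xy}{x^2+y^2}\di{\mu})^2\le\int\frac{x^2}{x^2+y^2}\di{\mu}$. That inequality gives the implication (5)$\Rightarrow$(1) only if the coefficient bookkeeping yields a lower bound $y(\Im F-y)\ge c\int\frac{x^2}{x^2+y^2}\di{\mu}$. I therefore expect the exact identity
\begin{align*}
\frac{\Im F_\mu(\iu y)}{y}=\frac{y\,\Im(-G)}{|yG|^2}
\end{align*}
combined with $|yG|\le1$ and $y\Im(-G)=1-\int\frac{x^2}{x^2+y^2}\di{\mu}$ to suffice for the lower bound. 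The reverse direction needs $|yG|^2\ge 1-C\int\frac{x^2}{x^2+y^2}\di{\mu}$, which should follow from the same Cauchy--Schwarz estimate.

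If this direct route stalls, the fallback is to go through $\sigma$ instead. I would use the classical correspondence that the tails of $\mu$ and of the Lévy measure are equivalent for $\log$-type submultiplicative moments (the free analogue of Sato's theorem, cf.\ \cite{BnT2002selfdecomp}), which gives (1)$\iff$(2) directly.
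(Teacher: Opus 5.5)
\textbf{Steps 1 and 2.} These are correct and follow the paper's argument: Tonelli for (2)$\iff$(3), the substitution $y\mapsto 1/y$ for (3)$\iff$(4), and $\Im F_\mu(\iu y)-y=-\Im\phi_\mu(F_\mu(\iu y))$ with a kernel comparison for large $y$ for (3)$\iff$(5). One sign should be fixed: $\Im R_\mu(\iu y)=\Im\phi_\mu(-\iu/y)=-\Im\phi_\mu(\iu/y)$.

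\textbf{The gap: Step 3, direction (5)$\Rightarrow$(1).} Set $a(y)=\int\frac{x^2}{x^2+y^2}\di{\mu}(x)$ and $b(y)=\int\frac{xy}{x^2+y^2}\di{\mu}(x)$. Then $yG_\mu(\iu y)=-b-\iu(1-a)$, and exactly
\begin{align*}
\frac{\Im F_\mu(\iu y)-y}{y}=\frac{a(1-a)-b^2}{b^2+(1-a)^2}.
\end{align*}
So the right normalization is $(\Im F_\mu(\iu y)-y)/y$, not $y(\Im F_\mu(\iu y)-y)$. Condition (1) is equivalent to $\int_T^\infty a(y)\,\di{y}/y<\infty$.

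The direction (1)$\Rightarrow$(5) is immediate: the numerator is at most $a$, and the denominator is at least $(1-a)^2\to1$. This uses the trivial bound $|yG_\mu|^2\ge(1-a)^2$, not Cauchy--Schwarz.

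For (5)$\Rightarrow$(1) you need $a(1-a)-b^2\ge ca-O(y^{-2})$, i.e.\ that the cross term $b^2$ does not cancel the main term. The tools you name cannot give this:
\begin{itemize}
\item Cauchy--Schwarz gives only $b^2\le a(1-a)$, i.e.\ $\Im F_\mu(\iu y)\ge y$.
\item The bound $|yG_\mu|\le1$ gives only $\Im F_\mu(\iu y)/y\ge1-a$, which is weaker still.
\end{itemize}
Cauchy--Schwarz is in fact saturated for $\mu=\delta_{x_0}$: there $b^2=a(1-a)$ and $\Im F_\mu(\iu y)=y$, although $a>0$. So no bookkeeping of these inequalities can produce the lower bound. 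The fallback to a ``free Sato theorem'' points to an unstated result; it is not a proof of (1)$\iff$(2).

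\textbf{What is missing.} You need a quantitative non-degeneracy estimate. The paper gets it from the L\'evy--Khintchine form. Since $\Re F_\mu(\iu y)=-yb/|yG_\mu(\iu y)|^2$, controlling $b$ amounts to controlling $\Re F_\mu(\iu y)$. Taking real parts in $\iu y=F_\mu(\iu y)+\phi_\mu(F_\mu(\iu y))$ and applying Cauchy--Schwarz to the $\sigma$-integral (with $|1+xF|^2\le(1+x^2)(1+|F|^2)$) gives
\begin{align*}
(\Re F_\mu(\iu y))^2\le 2\gamma^2+2\sigma(\RR)\frac{1+|F_\mu(\iu y)|^2}{\Im F_\mu(\iu y)}(\Im F_\mu(\iu y)-y).
\end{align*}
Inserting this into
\begin{align*}
\frac1y+\Im G_\mu(\iu y)=\frac{(\Re F_\mu(\iu y))^2+\Im F_\mu(\iu y)(\Im F_\mu(\iu y)-y)}{y|F_\mu(\iu y)|^2}
\end{align*}
bounds the integrand of (1) by $c_4y^{-3}+c_5(\Im F_\mu(\iu y)-y)/y^2$.

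Alternatively, you can repair your own route by replacing Cauchy--Schwarz with Lagrange's identity
\begin{align*}
a(1-a)-b^2=\frac12\iint\frac{y^2(x-x')^2}{(x^2+y^2)(x'^2+y^2)}\di{\mu}(x)\di{\mu}(x').
\end{align*}
Choose $R$ with $\mu([-R,R])\ge\frac12$ and restrict the integral to $|x|\le R$, $|x'|\ge 2R$. For $y\ge R$ this gives
\begin{align*}
a(1-a)-b^2\ge\frac{a}{32}-\frac{R^2}{8y^2}.
\end{align*}
Since $b^2+(1-a)^2\le1$, this closes (5)$\Rightarrow$(1), and it does not even use free infinite divisibility. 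As written, however, your Step 3 does not prove this direction.
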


In particular, the functions $I_\mu$ and $\mathcal{H}_\mu$ introduced in the next subsection are well-defined.

\begin{proof}
$\mathrm{(3)}\iff\mathrm{(4)}$ is obvious. We prove $\mathrm{(2)}\iff\mathrm{(3)}$. By exchanging the integrals, we have
\begin{align*}
\int_T^\infty\frac{-\Im\phi_\mu(\iu y)}{y^2}\di{y}&=\int_T^\infty\frac{1}{y}\int_{\RR}\frac{1+x^2}{x^2+y^2}\di{\sigma}(x)\di{y}\\
&=\int_{\RR}(1+x^2)\left(\int_T^\infty\frac{1}{y(x^2+y^2)}\di{y}\right)\di{\sigma}(x)\\
&=\frac{1}{2T^2}\sigma(\{0\})+\frac{1}{2}\int_{x\neq 0}\frac{1+x^2}{x^2}\log\left(1+\frac{x^2}{T^2}\right)\di{\sigma}(x).
\end{align*}
Since $\sigma$ is finite, it follows that
\begin{align*}
\int_T^\infty\frac{-\Im\phi_\mu(\iu y)}{y^2}\di{y}<\infty\iff\int_{\RR}\log(1+x^2)\di{\sigma}(x)<\infty.
\end{align*}

We next prove $\mathrm{(3)}\iff\mathrm{(5)}$. Since $\frac{F_\mu(\iu y)}{\iu y}\to 1$ as $y\to\infty$, there exists $y_0>0$ such that for all $y\ge y_0$,
\begin{align}
|\Re F_\mu(\iu y)|\le \frac{y}{2}\le \Im F_\mu(\iu y)\le 2y,\label{eq:re_im_F_ineq}\\
\iu y=F_\mu(\iu y)+\phi_\mu(F_\mu(\iu y)).\label{eq:voiculescu_F}
\end{align}
Taking imaginary parts in (\ref{eq:voiculescu_F}), we obtain
\begin{align*}
\Im F_\mu(\iu y)-y=\Im F_\mu(\iu y)\int_{\RR}\frac{1+x^2}{(\Re F_\mu(\iu y)-x)^2+(\Im F_\mu(\iu y))^2}\di{\sigma}(x).
\end{align*}
For $y\ge y_0$, the inequalities in \eqref{eq:re_im_F_ineq} imply
\begin{align*}
c_1\frac{1+x^2}{x^2+y^2}\le\frac{1+x^2}{(\Re F_\mu(\iu y)-x)^2+(\Im F_\mu(\iu y))^2}\le c_2\frac{1+x^2}{x^2+y^2},
\end{align*}
for $x\in\RR$ and some $c_1,c_2>0$. Since $\frac{y}{2}\le \Im F_\mu(\iu y)\le 2y$, it follows that
\begin{align*}
\frac{c_1}{2}\frac{-\Im\phi_\mu(\iu y)}{y^2}\le\frac{\Im F_\mu(\iu y)-y}{y^2}\le 2c_2\frac{-\Im\phi_\mu(\iu y)}{y^2}
\end{align*}
for all $y\ge y_0$. Hence,
\begin{align*}
\int_T^\infty\frac{-\Im\phi_\mu(\iu y)}{y^2}\di{y}<\infty\iff\int_T^\infty\frac{\Im F_\mu(\iu y)-y}{y^2}\di{y}<\infty.
\end{align*}

Finally, we prove $\mathrm{(1)}\iff\mathrm{(5)}$. First, observe that
\begin{align*}
\int_{\RR}\log(1+x^2)\di{\mu}(x)=2\int_1^\infty\left(\frac{1}{y}+\Im G_\mu(\iu y)\right)\di{y}.
\end{align*}
Thus it suffices to compare $\frac{1}{y}+\Im G_\mu(\iu y)$ with $(\Im F_\mu(\iu y)-y)/y^2$. Since $G_\mu(\iu y)=1/F_\mu(\iu y)$, we have
\begin{align}
\frac{1}{y}+\Im G_\mu(\iu y)=\frac{(\Re F_\mu(\iu y))^2+\Im F_\mu(\iu y)(\Im F_\mu(\iu y)-y)}{y|F_\mu(\iu y)|^2}.\label{eq:G_F}
\end{align}
Moreover, by the relation \eqref{eq:re_im_F_ineq}, we have
\begin{align}
\frac{\Im F_\mu(\iu y)-y}{4y^2}\le\frac{\Im F_\mu(\iu y)(\Im F_\mu(\iu y)-y)}{y|F_\mu(\iu y)|^2}\le 2\frac{\Im F_\mu(\iu y)-y}{y^2}.\label{eq:F_estimates}
\end{align}
Hence, for $y\ge y_0$,
\begin{align*}
\frac{1}{y}+\Im G_\mu(\iu y)\ge\frac{\Im F_\mu(\iu y)(\Im F_\mu(\iu y)-y)}{y|F_\mu(\iu y)|^2}\ge\frac{\Im F_\mu(\iu y)-y}{4y^2}.
\end{align*}

For the reverse inequality, taking real parts in \eqref{eq:voiculescu_F}, we obtain
\begin{align*}
\Re F_\mu(\iu y)=-\gamma-\int_{\RR}\Re\frac{1+xF_\mu(\iu y)}{F_\mu(\iu y)-x}\di{\sigma}(x).
\end{align*}
Hence, by the Cauchy--Schwarz inequality,
\begin{align*}
(\Re F_\mu(\iu y))^2&\le2\gamma^2+2\sigma(\RR)\int_{\RR}\left|\frac{1+xF_\mu(\iu y)}{F_\mu(\iu y)-x}\right|^2\di{\sigma}(x)\\
&\le2\gamma^2+2\sigma(\RR)\int_{\RR}\frac{(1+x^2)(1+|F_\mu(\iu y)|^2)}{|F_\mu(\iu y)-x|^2}\di{\sigma}(x)\\
&=2\gamma^2-2\sigma(\RR)\left(\frac{1+|F_\mu(\iu y)|^2}{\Im F_\mu(\iu y)}\right)\int_{\RR}\Im\frac{1+xF_\mu(\iu y)}{F_\mu(\iu y)-x}\di{\sigma}(x)\\
&=2\gamma^2+2\sigma(\RR)\frac{1+|F_\mu(\iu y)|^2}{\Im F_\mu(\iu y)}(\Im F_\mu(\iu y)-y).
\end{align*}
Using again the bounds in \eqref{eq:re_im_F_ineq}, it follows that
\begin{align*}
\frac{(\Re F_\mu(\iu y))^2}{y|F_\mu(\iu y)|^2}\le c_3\left(\frac{1}{y^3}+\frac{\Im F_\mu(\iu y)-y}{y^2}\right)
\end{align*}
for some constant $c_3>0$ and all $y\ge y_0$. Together with \eqref{eq:G_F} and \eqref{eq:F_estimates}, this yields
\begin{align*}
\frac{1}{y}+\Im G_\mu(\iu y)\le\frac{c_4}{y^3}+c_5\frac{\Im F_\mu(\iu y)-y}{y^2}
\end{align*}
for some constants $c_4,c_5>0$ and all $y\ge y_0$. Hence, we conclude that
\begin{align*}
\int_1^\infty\left(\frac{1}{y}+\Im G_\mu(\iu y)\right)\di{y}<\infty\iff\int_1^\infty\frac{\Im F_\mu(\iu y)-y}{y^2}\di{y}<\infty.
\end{align*}
This proves $\mathrm{(1)}\iff\mathrm{(5)}$ and completes the proof.
\end{proof}

\begin{corollary}
Let $\mu$ be freely infinitely divisible, and let $(\gamma,\sigma)$ be its free generating pair. Then the following are equivalent:
\begin{enumerate}
\item $\int_{\RR}\log(1+x^2)\di{\mu^{\boxplus t}}(x)<\infty$ for some $t>0$.
\item $\int_{\RR}\log(1+x^2)\di{\mu^{\boxplus t}}(x)<\infty$ for every $t>0$.
\item $\int_{\RR}\log(1+x^2)\di{\sigma}(x)<\infty$.
\end{enumerate}
In particular, if $\mu$ is log-integrable, then the whole free convolution semigroup $(\mu^{\boxplus t})_{t>0}$ is log-integrable.
\end{corollary}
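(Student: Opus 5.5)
The plan is to reduce everything to Theorem~\ref{thm:log_integrability_semigroup}, applied to the freely infinitely divisible measure $\mu^{\boxplus t}$ in place of $\mu$. The first step is to record the free generating pair of $\mu^{\boxplus t}$. Since the Voiculescu transform linearizes free additive convolution and $\phi_\mu$ extends analytically to $\CC^+$ by Proposition~\ref{prop:free_LH}, we have $\phi_{\mu^{\boxplus t}}=t\phi_\mu$. For integer $t$ this is immediate from additivity. For general $t>0$ it is the standard definition of $\mu^{\boxplus t}$ as the freely infinitely divisible law with Voiculescu transform $t\phi_\mu$, which is well defined because $t\phi_\mu$ still has the Lévy--Hinčin form. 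By the uniqueness part of Proposition~\ref{prop:free_LH}, the generating pair of $\mu^{\boxplus t}$ is $(t\gamma,t\sigma)$.

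The second step applies the equivalence (1)$\iff$(2) of Theorem~\ref{thm:log_integrability_semigroup} to $\mu^{\boxplus t}$. This gives, for each fixed $t>0$,
\begin{align*}
\int_{\RR}\log(1+x^2)\di{\mu^{\boxplus t}}(x)<\infty\iff\int_{\RR}\log(1+x^2)\,t\di{\sigma}(x)<\infty\iff\int_{\RR}\log(1+x^2)\di{\sigma}(x)<\infty .
\end{align*}
The right-hand condition does not depend on $t$, so (1), (2) and (3) are all equivalent. Specifically, (1) implies (3) by taking the given $t$, (3) implies (2) by applying the display for every $t$, and (2) implies (1) trivially.

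The final assertion is the case $t=1$ of (1), which then gives (2). The only point needing care is the identification of the generating pair of $\mu^{\boxplus t}$ for non-integer $t$, including that the scaled pair is again a valid generating pair. I expect this step to be the main (mild) obstacle, and I would handle it by citing the construction of free convolution powers of infinitely divisible laws through $t\phi_\mu$.
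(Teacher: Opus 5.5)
Your proposal is correct and follows the paper's own argument: identify the free generating pair of $\mu^{\boxplus t}$ as $(t\gamma,t\sigma)$, then apply the equivalence (1)$\iff$(2) of Theorem~\ref{thm:log_integrability_semigroup} to $\mu^{\boxplus t}$. The resulting condition on $\sigma$ does not depend on $t$, which gives all three equivalences and the final assertion via $t=1$.
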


\begin{proof}
The free generating pair of $\mu^{\boxplus t}$ is $(t\gamma,t\sigma)$. Hence the conclusion follows directly from Theorem \ref{thm:log_integrability_semigroup}.
\end{proof}

\subsection{Determinant regularity}

We study the Fuglede--Kadison determinant along the semigroup generated by a freely infinitely divisible $R$-diagonal perturbation. Let $X_0\in\log^+(\MM)$ and let $(Y_t)_{t\ge0}$ be a family of $R$-diagonal elements such that $X_0$ and $Y_t$ are $*$-free for every $t\ge0$ and
\begin{align*}
\widetilde{\mu}_{|Y_t|}=\mu^{\boxplus t},\quad t\ge0,
\end{align*}
where $\mu=\widetilde{\mu}_{|Y_1|}$ is a symmetric freely infinitely divisible measure and $Y_1\in\log^+(\MM)$. It follows from Theorem \ref{thm:log_integrability_semigroup} that $Y_t\in\log^+(\MM)$ for all $t>0$. We write $X_t\coloneqq X_0+Y_t$. This framework includes several models previously considered in the literature. For instance, when $\mu$ is the semicircle distribution, the model corresponds to the circular flow studied in \cite{HZ2023Brown,BYZ2024Brown,Z2026brown}. Furthermore, the case where $\mu$ is the Cauchy distribution corresponds to the circular Cauchy perturbation treated in \cite{HS2009inv}.

\subsubsection{Global inversion formula in our setting}

Fix $\lambda\in \CC$ and $t>0$. Set
\begin{align*}
\mu_{0,\lambda}\coloneqq \widetilde{\mu}_{|X_0-\lambda|},\quad\mu_t\coloneqq \widetilde{\mu}_{|Y_t|}=\mu^{\boxplus t}.
\end{align*}
Define
\begin{align*}
H_{\lambda,t}(z)\coloneqq z+\phi_{\mu_t}(F_{\mu_{0,\lambda}}(z))=z+tR_\mu(G_{\mu_{0,\lambda}}(z)).
\end{align*}
By Theorem \ref{thm:global_inversion}, there exists a domain $\Omega_{\lambda,t}\subseteq\CC^+$ such that $H_{\lambda,t}\colon\Omega_{\lambda,t}\to\CC^+$ is a conformal isomorphism. We denote this inverse, which is the first subordination function, by $\omega_{1,t}^{(\lambda)}\colon\CC^+\to\Omega_{\lambda,t}$. Let $\omega_{2,t}^{(\lambda)}$ denote the corresponding second subordination function.

For a symmetric probability measure $\nu$ on $\RR$, we write
\begin{align*}
\Gsf_\nu(y)\coloneqq -\Im G_\nu(\iu y),\quad y>0.
\end{align*}
We also set
\begin{align*}
\Gsf_{t,\lambda}(y)\coloneqq -\Im G_{\widetilde{\mu}_{|X_t-\lambda|}}(\iu y),\quad\Rsf_\mu(y)\coloneqq \Im R_\mu(\iu y),\quad y>0.
\end{align*}
By symmetry, $\omega_{j,t}^{(\lambda)}(\iu\ee)$ is purely imaginary for every $\ee>0$ and $j=1,2$. We write
\begin{align*}
\omega_{j,t}^{(\lambda)}(\iu\ee)=\iu W_{j,t}^{(\lambda)}(\ee),\quad \ee>0,\ j=1,2.
\end{align*}
By the subordination relation \eqref{eq:rdiag-subordination-F} and the identity \eqref{eq:symm-modulus-cauchy}, we have
\begin{align}
\Gsf_{t,\lambda}(\ee)&=\Gsf_{\mu_{0,\lambda}}(W_{1,t}^{(\lambda)}(\ee))=W_{1,t}^{(\lambda)}(\ee)\tau((|X_0-\lambda|^2+(W_{1,t}^{(\lambda)}(\ee))^2)^{-1}), \label{eq:Gsf-X0-resolvent}\\
\Gsf_{t,\lambda}(\ee)&=\Gsf_{\mu_t}(W_{2,t}^{(\lambda)}(\ee))=W_{2,t}^{(\lambda)}(\ee)\tau((|Y_t|^2+(W_{2,t}^{(\lambda)}(\ee))^2)^{-1}).
\label{eq:Gsf-Yt-resolvent}
\end{align}
Moreover, passing to imaginary parts in \eqref{eq:rdiag-subordination-sum} gives
\begin{align}
\Gsf_{t,\lambda}(\ee)^{-1}=W_{1,t}^{(\lambda)}(\ee)+W_{2,t}^{(\lambda)}(\ee)-\ee.
\label{eq:Gsf-inverse-W}
\end{align}
The identity $H_{\lambda,t}(\omega_{1,t}^{(\lambda)}(\iu\ee))=\iu\ee$ gives
\begin{align}
W_{1,t}^{(\lambda)}(\ee)-\ee&=t\Rsf_\mu(\Gsf_{\mu_{0,\lambda}}(W_{1,t}^{(\lambda)}(\ee)))=t\Rsf_\mu(\Gsf_{t,\lambda}(\ee)).
\label{eqn:w-fixed-pt}
\end{align}

If $W_{1,t}^{(\lambda)}(0)>0$, then passing to the limit $\ee\to 0$ in the preceding identity yields
\begin{align*}
W_{1,t}^{(\lambda)}(0)=t\Rsf_\mu(\Gsf_{\mu_{0,\lambda}}(W_{1,t}^{(\lambda)}(0))).
\end{align*}

\subsubsection{Hamilton--Jacobi equation}

\begin{theorem}
\label{thm:determinantal-regularity}
With the notation above, the following hold:
\begin{enumerate}
\item For every $t>0$, $\ee>0$, and $\lambda\in \CC$,
\begin{align*}
\tau(\log(|X_t-\lambda|^2+\ee^2))=\tau(\log(|X_0-\lambda|^2+(W_{1,t}^{(\lambda)}(\ee))^2))+tI_\mu(\Gsf_{t,\lambda}(\ee)),
\end{align*}
where 
\begin{align*}
    I_\mu(y)&=2\int_0^y\Rsf_\mu(r)\di{r}-2y\Rsf_\mu(y),\quad y\ge0.
\end{align*}
\item For every fixed $\lambda\in \CC$, the function $t\mapsto \Delta(X_t-\lambda)$ is increasing on $[0,\infty)$. More precisely, for every $\ee>0$,
\begin{align*}
\partial_t\tau(\log(|X_t-\lambda|^2+\ee^2))=2\int_0^{\Gsf_{t,\lambda}(\ee)}\Rsf_\mu(r)\di{r}\ge 0.
\end{align*}
Moreover, $\lim_{t\to 0}\Delta(X_t-\lambda)=\Delta(X_0-\lambda)$.
\item As $t\to 0$, the Brown measure $\mu_{X_t}$ converges weakly to $\mu_{X_0}$.
\end{enumerate}
\end{theorem}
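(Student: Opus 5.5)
The plan is to derive (1) from the subordination formula in Proposition~\ref{prop:log_symm_subord}, and then obtain (2) and (3) by differentiating (1) in $t$ and by a monotone-limit argument.

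\textbf{Part (1).} Apply Proposition~\ref{prop:log_symm_subord} with $X_1=X_0$, $X_2=Y_t$ and $y=\ee$. Write $\omega_{j,t}^{(\lambda)}(\iu\ee)=\iu W_j$. Then $-\omega_j^2=W_j^2$, and by \eqref{eq:Gsf-inverse-W} the last logarithm equals $-\log(W_1+W_2-\ee)=\log \Gsf_{t,\lambda}(\ee)$. Multiplying by $2$, (1) reduces to the following one-operator identity for $Y_t$, with $g=\Gsf_{t,\lambda}(\ee)=\Gsf_{\mu_t}(W_2)$:
\begin{align*}
K(g)\coloneqq\tau(\log(|Y_t|^2+W_2^2))+2\log g=tI_\mu(g).
\end{align*}
By \eqref{eq:Gsf-inverse-W} and \eqref{eqn:w-fixed-pt} we have $W_2=1/g-t\Rsf_\mu(g)$, and this relation holds for every $g$ in the range of $\Gsf_{\mu_t}$ on $(0,\infty)$. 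So we may regard $W_2=W_2(g)$ and differentiate in $g$. Using $\partial_W\tau(\log(|Y_t|^2+W^2))=2\Gsf_{\mu_t}(W)=2g$ (by \eqref{eq:Gsf-Yt-resolvent}) and $W_2'(g)=-g^{-2}-t\Rsf_\mu'(g)$, we get $K'(g)=-2tg\Rsf_\mu'(g)$. A direct computation also gives $I_\mu'(g)=-2g\Rsf_\mu'(g)$. Hence $K-tI_\mu$ is constant on the range of $g$.

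The constant is identified by letting $g\to0$, which corresponds to $W_2\to\infty$. We have $\tau(\log(|Y_t|^2+W_2^2))-2\log W_2\to0$ by dominated convergence, since $Y_t\in\log^+(\MM)$. We also have $2\log(gW_2)=2\log(1-tg\Rsf_\mu(g))\to0$. Finally, $I_\mu(g)\to0$, because $\int_0\Rsf_\mu<\infty$ by Theorem~\ref{thm:log_integrability_semigroup}(4), and this integrability together with the monotonicity of $\Rsf_\mu$ also forces $g\Rsf_\mu(g)\to0$. I expect this endpoint analysis to be the main technical obstacle. It requires checking that the range of $g$ is an interval accumulating at $0$, and justifying the implicit-function differentiability of $W_2$ in $g$.

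\textbf{Part (2).} Fix $\ee$ and differentiate the right-hand side of (1) in $t$. The functions $W_1$ and $G=\Gsf_{t,\lambda}(\ee)$ are differentiable in $t$ by the implicit function theorem applied to $H_{\lambda,t}(\omega)=\iu\ee$, using that $H_{\lambda,t}'\neq0$ on $\Omega_{\lambda,t}$. By \eqref{eq:Gsf-X0-resolvent}, the $W_1$-derivative of the first term is $2G$. By \eqref{eqn:w-fixed-pt}, $\partial_tW_1=\Rsf_\mu(G)+t\Rsf_\mu'(G)\partial_tG$. Therefore
\begin{align*}
\partial_t(\cdot)=2G\Rsf_\mu(G)+2tG\Rsf_\mu'(G)\partial_tG+I_\mu(G)-2tG\Rsf_\mu'(G)\partial_tG=2\int_0^G\Rsf_\mu(r)\di{r},
\end{align*}
which is nonnegative since $\Rsf_\mu\ge0$ for a symmetric freely infinitely divisible $\mu$. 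Letting $\ee\downarrow0$, monotone convergence in $\ee$ preserves monotonicity in $t$, so $t\mapsto\Delta(X_t-\lambda)$ is increasing. For the limit as $t\to0$, monotonicity gives $\liminf_{t\to0}\Delta(X_t-\lambda)\ge\Delta(X_0-\lambda)$. For the upper bound, use $2\log\Delta(X_t-\lambda)\le\tau(\log(|X_t-\lambda|^2+\ee^2))$. The right-hand side tends to $\tau(\log(|X_0-\lambda|^2+\ee^2))$ as $t\to0$, since $W_1\to\ee$ and $tI_\mu(G)\to0$. Sending $\ee\to0$ then gives the matching upper bound.

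\textbf{Part (3).} The functions $L_{X_t}$ decrease pointwise to $L_{X_0}$ as $t\downarrow0$, by (2). They are subharmonic and locally uniformly bounded above, since $\log\Delta(X_t-\lambda)\le\frac12\tau(\log(|X_t-\lambda|^2+1))$ is continuous in $(t,\lambda)$. Hence the convergence $L_{X_t}\to L_{X_0}$ holds in $L^1_{\mathrm{loc}}$. Taking Laplacians, $\mu_{X_t}\to\mu_{X_0}$ vaguely. Since $\mu_{X_0}$ is a probability measure, no mass escapes, and vague convergence upgrades to weak convergence.
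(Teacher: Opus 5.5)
Your proposal is correct and follows the paper's proof essentially step for step. It uses the same splitting via Proposition~\ref{prop:log_symm_subord} and the same cancellation through $I_\mu'(y)=-2y\Rsf_\mu'(y)$; differentiating in $g=\Gsf_{t,\lambda}(\ee)$ instead of in $\ee$ is only a change of variable. It fixes the constant by the same limit $g\to0$ (equivalently $\ee\to\infty$), and uses the same $t$-differentiation and monotone/dominated convergence arguments for (2) and (3).

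One justification needs repair: your claim that $g\Rsf_\mu(g)\to0$ follows from monotonicity of $\Rsf_\mu$. With $(0,\Lambda)$ the free generating pair of $\mu$, $\Rsf_\mu(y)=\int_{\RR}\frac{y(1+s^2)}{1+y^2s^2}\di{\Lambda}(s)$ need not be monotone. The conclusion still holds directly: $y\Rsf_\mu(y)=\int_{\RR}\frac{y^2(1+s^2)}{1+y^2s^2}\di{\Lambda}(s)\to0$ by dominated convergence, since the integrand is bounded by $1$ for $y\le1$ and tends to $0$ pointwise.
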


\begin{proof}
Fix $\lambda\in\CC$, $t>0$, and $\ee>0$. By Proposition \ref{prop:log_symm_subord} and \eqref{eq:Gsf-inverse-W}, we have
\begin{align}
\tau(\log(|X_t-\lambda|^2+\ee^2))&=\tau(\log(|X_0-\lambda|^2+(W_{1,t}^{(\lambda)}(\ee))^2))+C_{t,\lambda}(\ee),
\label{eq:determinant-decomposition}
\end{align}
where
\begin{align*}
C_{t,\lambda}(\ee)\coloneqq \tau(\log(|Y_t|^2+(W_{2,t}^{(\lambda)}(\ee))^2))+2\log\Gsf_{t,\lambda}(\ee).
\end{align*}

We first identify $C_{t,\lambda}(\ee)$. Differentiating \eqref{eq:Gsf-inverse-W} with respect to $\ee$, we obtain
\begin{align}
\frac{2\partial_\ee\Gsf_{t,\lambda}(\ee)}{\Gsf_{t,\lambda}(\ee)}&=-2\Gsf_{t,\lambda}(\ee)(\partial_\ee W_{1,t}^{(\lambda)}(\ee)+\partial_\ee W_{2,t}^{(\lambda)}(\ee)-1).
\label{eq:epsilon-derivative-Gsf-inverse}
\end{align}
On the other hand, differentiating \eqref{eqn:w-fixed-pt} with respect to $\ee$, we have
\begin{align}
\partial_\ee W_{1,t}^{(\lambda)}(\ee)-1&=t\Rsf_\mu^\prime(\Gsf_{t,\lambda}(\ee))\partial_\ee\Gsf_{t,\lambda}(\ee).
\label{eq:epsilon-derivative-fixed-point}
\end{align}
By \eqref{eq:Gsf-Yt-resolvent},
\begin{align*}
\partial_\ee\tau(\log(|Y_t|^2+(W_{2,t}^{(\lambda)}(\ee))^2))=2\Gsf_{t,\lambda}(\ee)\partial_\ee W_{2,t}^{(\lambda)}(\ee).
\end{align*}
Therefore, using \eqref{eq:epsilon-derivative-Gsf-inverse} and \eqref{eq:epsilon-derivative-fixed-point}, we get
\begin{align*}
\partial_\ee C_{t,\lambda}(\ee)&=2\Gsf_{t,\lambda}(\ee)\partial_\ee W_{2,t}^{(\lambda)}(\ee)+\frac{2\partial_\ee\Gsf_{t,\lambda}(\ee)}{\Gsf_{t,\lambda}(\ee)}\\
&=-2\Gsf_{t,\lambda}(\ee)(\partial_\ee W_{1,t}^{(\lambda)}(\ee)-1)\\
&=-2t\Gsf_{t,\lambda}(\ee)\Rsf_\mu^\prime(\Gsf_{t,\lambda}(\ee))\partial_\ee\Gsf_{t,\lambda}(\ee).
\end{align*}
Since $I_\mu^\prime(y)=-2y\Rsf_\mu^\prime(y)$, it follows that
\begin{align}
\partial_\ee\left(C_{t,\lambda}(\ee)-tI_\mu(\Gsf_{t,\lambda}(\ee))\right)&=0.
\label{eq:C-minus-tI-constant}
\end{align}
Thus $C_{t,\lambda}(\ee)-tI_\mu(\Gsf_{t,\lambda}(\ee))$ is independent of $\ee$.

We now show that this constant is zero. As $\ee\to\infty$, one has $\Gsf_{t,\lambda}(\ee)\to0$ and $W_{2,t}^{(\lambda)}(\ee)\to\infty$. Moreover, by \eqref{eq:Gsf-inverse-W} and \eqref{eqn:w-fixed-pt},
\begin{align}
\Gsf_{t,\lambda}(\ee)W_{2,t}^{(\lambda)}(\ee)&=1-t\Gsf_{t,\lambda}(\ee)\Rsf_\mu(\Gsf_{t,\lambda}(\ee)).
\label{eq:Gsf-W2-asymptotic}
\end{align}
Since $y\Rsf_\mu(y)\to0$ as $y\to0$, \eqref{eq:Gsf-W2-asymptotic} implies $\Gsf_{t,\lambda}(\ee)W_{2,t}^{(\lambda)}(\ee)\to1$. Hence
\begin{align*}
\tau(\log(|Y_t|^2+(W_{2,t}^{(\lambda)}(\ee))^2))-2\log W_{2,t}^{(\lambda)}(\ee)\to0
\end{align*}
by dominated convergence, and $2\log(\Gsf_{t,\lambda}(\ee)W_{2,t}^{(\lambda)}(\ee))\to0$. Therefore $C_{t,\lambda}(\ee)\to0$. Since $I_\mu(0)=0$ and $\Gsf_{t,\lambda}(\ee)\to0$, we also have $tI_\mu(\Gsf_{t,\lambda}(\ee))\to0$. Thus the constant in \eqref{eq:C-minus-tI-constant} is zero, so
\begin{align}
C_{t,\lambda}(\ee)&=tI_\mu(\Gsf_{t,\lambda}(\ee)).
\label{eq:C-equals-tI}
\end{align}
Combining \eqref{eq:determinant-decomposition} and \eqref{eq:C-equals-tI} proves the identity in $\mathrm{(1)}$.

We next prove $\mathrm{(2)}$. For each fixed $\ee>0$, \eqref{eqn:w-fixed-pt} implies $W_{1,t}^{(\lambda)}(\ee)\to\ee$ as $t\to0$. Differentiating \eqref{eqn:w-fixed-pt} with respect to $t$, we have
\begin{align}
\partial_tW_{1,t}^{(\lambda)}(\ee)&=\Rsf_\mu(\Gsf_{t,\lambda}(\ee))+t\Rsf_\mu^\prime(\Gsf_{t,\lambda}(\ee))\partial_t\Gsf_{t,\lambda}(\ee).
\label{eq:t-derivative-fixed-point}
\end{align}
Using \eqref{eq:Gsf-X0-resolvent}, we also have
\begin{align*}
\partial_t\Gsf_{t,\lambda}(\ee)&=\partial_tW_{1,t}^{(\lambda)}(\ee)\tau((|X_0-\lambda|^2+(W_{1,t}^{(\lambda)}(\ee))^2)^{-1})\\
&\quad-2(W_{1,t}^{(\lambda)}(\ee))^2\partial_tW_{1,t}^{(\lambda)}(\ee)\tau((|X_0-\lambda|^2+(W_{1,t}^{(\lambda)}(\ee))^2)^{-2}).
\end{align*}
Differentiating the identity in $\mathrm{(1)}$ with respect to $t$, and then using \eqref{eq:t-derivative-fixed-point} together with $I_\mu^\prime(y)=-2y\Rsf_\mu^\prime(y)$, we obtain
\begin{align*}
\partial_t\tau(\log(|X_t-\lambda|^2+\ee^2))&=2\Gsf_{t,\lambda}(\ee)\partial_tW_{1,t}^{(\lambda)}(\ee)+I_\mu(\Gsf_{t,\lambda}(\ee))+tI_\mu^\prime(\Gsf_{t,\lambda}(\ee))\partial_t\Gsf_{t,\lambda}(\ee)\\
&=2\Gsf_{t,\lambda}(\ee)\Rsf_\mu(\Gsf_{t,\lambda}(\ee))+I_\mu(\Gsf_{t,\lambda}(\ee))\\
&=2\int_0^{\Gsf_{t,\lambda}(\ee)}\Rsf_\mu(r)\di{r}\ge0.
\end{align*}
Thus the regularized Fuglede--Kadison determinant $\Delta((|X_t-\lambda|^2+\ee^2)^{1/2})$ is increasing in $t$ for every $\ee>0$. Letting $\ee\to0$ gives the monotonicity of $t\mapsto\Delta(X_t-\lambda)$.

It remains to show that $\lim_{t\to0}\Delta(X_t-\lambda)=\Delta(X_0-\lambda)$. For fixed $\ee>0$, the identity in $\mathrm{(1)}$, together with $W_{1,t}^{(\lambda)}(\ee)\to\ee$ and $tI_\mu(\Gsf_{t,\lambda}(\ee))\to0$, gives
\begin{align*}
\tau(\log(|X_t-\lambda|^2+\ee^2))\to\tau(\log(|X_0-\lambda|^2+\ee^2))
\end{align*}
as $t\to0$. Hence, for every $\ee>0$,
\begin{align*}
\limsup_{t\to0}\log\Delta(X_t-\lambda)\le\frac{1}{2}\tau(\log(|X_0-\lambda|^2+\ee^2)).
\end{align*}
Letting $\ee\to0$ gives
\begin{align*}
\limsup_{t\to0}\log\Delta(X_t-\lambda)\le\log\Delta(X_0-\lambda).
\end{align*}
The reverse inequality follows from the monotonicity already proved. Hence $\lim_{t\to0}\Delta(X_t-\lambda)=\Delta(X_0-\lambda)$.

Finally, we prove $\mathrm{(3)}$. Define $u_t(\lambda)\coloneqq\log\Delta(X_t-\lambda)$ and $u_0(\lambda)\coloneqq\log\Delta(X_0-\lambda)$. Each $u_t$ is subharmonic. Fix $\phi\in C_c^\infty(\CC)$ and $t_0>0$. By $\mathrm{(2)}$, we have $u_0(\lambda)\le u_t(\lambda)\le u_{t_0}(\lambda)$ for every $t\in[0,t_0]$, and $u_t(\lambda)\to u_0(\lambda)$ for every $\lambda\in\CC$ as $t\to0$. Since subharmonic functions are locally integrable and $|u_t(\lambda)|\le |u_0(\lambda)|+|u_{t_0}(\lambda)|$, the dominated convergence theorem implies
\begin{align*}
\int_{\CC}\phi(\lambda)\di{\mu_{X_t}}(\lambda)&=\frac{1}{2\pi}\int_{\CC}(\Delta\phi)(\lambda)u_t(\lambda)\di{\lambda_1}\di{\lambda_2}\to \frac{1}{2\pi}\int_{\CC}(\Delta\phi)(\lambda)u_0(\lambda)\di{\lambda_1}\di{\lambda_2}=\int_{\CC}\phi(\lambda)\di{\mu_{X_0}}(\lambda).
\end{align*}
Thus $\mu_{X_t}\to\mu_{X_0}$ weakly as $t\to0$.
\end{proof}

As an immediate consequence of Theorem \ref{thm:determinantal-regularity}, we obtain the following Hamilton--Jacobi equation. This type of equation also appears in related models, such as \cite{DHK2022free,HZ2023Brown,HH2023brown}; in particular, it extends the circular case studied in \cite[Proposition 3.2]{HZ2023Brown}.

\begin{proposition}
\label{prop:hj_pde}
For $\lambda\in \CC$, define
\begin{align*}
S(t,\lambda,\ee)\coloneqq \tau(\log(|X_t-\lambda|^2+\ee^2)).
\end{align*}
Then $S(t,\lambda,\ee)$ satisfies the following Hamilton--Jacobi equation:
\begin{align*}
\partial_tS(t,\lambda,\ee)=2\mathcal H_\mu\left(\frac{\partial_\ee S(t,\lambda,\ee)}{2}\right),
\end{align*}
where $\mathcal H_\mu$ is defined by
\begin{align*}
\mathcal H_\mu(y)\coloneqq \int_0^y\Rsf_\mu(r)\di{r},\quad y\ge 0.
\end{align*}
\end{proposition}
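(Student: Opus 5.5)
The plan is to combine the two formulas of Theorem~\ref{thm:determinantal-regularity}. Part (2) already gives
\begin{align*}
\partial_tS(t,\lambda,\ee)=2\int_0^{\Gsf_{t,\lambda}(\ee)}\Rsf_\mu(r)\di{r}=2\mathcal H_\mu(\Gsf_{t,\lambda}(\ee)).
\end{align*}
So it suffices to prove the identity
\begin{align*}
\partial_\ee S(t,\lambda,\ee)=2\Gsf_{t,\lambda}(\ee),
\end{align*}
after which substituting $\Gsf_{t,\lambda}(\ee)=\partial_\ee S/2$ yields the claimed Hamilton--Jacobi equation. The function $\mathcal H_\mu$ is well defined because $\Rsf_\mu$ is integrable near $0$, by Theorem~\ref{thm:log_integrability_semigroup}(4) applied to the log-integrable symmetric law $\mu$.

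The identity for $\partial_\ee S$ is elementary and does not require subordination. Write $A=X_t-\lambda$. Differentiating under the trace gives
\begin{align*}
\partial_\ee\tau(\log(|A|^2+\ee^2))=2\ee\,\tau((|A|^2+\ee^2)^{-1}).
\end{align*}
By \eqref{eq:symm-modulus-cauchy} with $z=\iu\ee$, we have $G_{\widetilde{\mu}_{|A|}}(\iu\ee)=\iu\ee\,\tau((-\ee^2-|A|^2)^{-1})=-\iu\ee\,\tau((|A|^2+\ee^2)^{-1})$. Hence
\begin{align*}
\Gsf_{t,\lambda}(\ee)=\ee\,\tau((|A|^2+\ee^2)^{-1}),
\end{align*}
which is exactly half the derivative.

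The main (mild) obstacle is justifying differentiation under the trace for unbounded $A$. Since $\ee>0$ is fixed, the derivative $2\ee/(x+\ee^2)$ of $x\mapsto\log(x+\ee^2)$ is bounded by $2/\ee$ uniformly on $x\ge0$. Integrating against $\mu_{|A|^2}$, with $X_t\in\log^+(\MM)$ ensuring finiteness of $S$, the mean value theorem and dominated convergence justify the interchange.

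The $t$-derivative statement in part (2) is taken as given. If one insists on re-verifying it, it follows from differentiating part (1) in $t$ and cancelling terms using \eqref{eq:t-derivative-fixed-point} and $I_\mu'(y)=-2y\Rsf_\mu'(y)$, exactly as already carried out in that proof.
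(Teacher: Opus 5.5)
Your proposal is correct and matches the paper's proof: the paper also combines Theorem~\ref{thm:determinantal-regularity}(2) with the identity $\partial_\ee S(t,\lambda,\ee)=2\Gsf_{t,\lambda}(\ee)$, which it simply asserts ``by definition''. You additionally fill in that one-line step: you check it via \eqref{eq:symm-modulus-cauchy} at $z=\iu\ee$, justify differentiating under the trace by dominated convergence with the uniform bound $2\ee/(x+\ee^2)\le 2/\ee$, and note that $\mathcal H_\mu$ is well defined by Theorem~\ref{thm:log_integrability_semigroup}.
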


\begin{proof}
By definition, $\partial_\ee S(t,\lambda,\ee)=2\Gsf_{t,\lambda}(\ee)$. The identity then follows from Theorem \ref{thm:determinantal-regularity}(2).
\end{proof}

\begin{example}
We first consider the free circular Brownian motion model $X_0+c_t$, where $c_t$ is a circular element with variance $t$. Let $c$ denote the circular element at time $t=1$. Then the symmetrized law $\mu_0\coloneqq\widetilde{\mu}_{|c|}$ of the radial part is the centered semicircle law, so $\Rsf_{\mu_0}(r)=r$. Proposition \ref{prop:hj_pde} gives
\begin{align*}
\partial_t S(t,\lambda,\ee)=\frac{1}{4}(\partial_\ee S(t,\lambda,\ee))^2.
\end{align*}
This relation was established in \cite[Proposition 3.2]{HZ2023Brown}, where it was used to analyze the regularity of the Brown measure; see also \cite{H2021pde} for an overview of PDE methods.

Proposition \ref{prop:hj_pde} also applies beyond the circular flow. Let $c_1c_2^{-1}$ be the circular Cauchy element, and consider the perturbation model $X_0+t c_1c_2^{-1}$. This model and its properties were extensively studied in \cite{HS2007Brown,HS2009inv}. The symmetrized law $\mu_1\coloneqq\widetilde{\mu}_{|c_1c_2^{-1}|}$ is the standard Cauchy law, and therefore $\Rsf_{\mu_1}(r)=1$. Thus the Hamilton--Jacobi equation reduces to
\begin{align*}
\partial_t S(t,\lambda,\ee)=\partial_\ee S(t,\lambda,\ee).
\end{align*}

More generally, consider the subfamily $X_{1,k}$ from Example \ref{ex:x_mk}. Recall that $\mu_k\coloneqq\widetilde{\mu}_{|X_{1,k}|}$ is the symmetric free stable law of index $2/(k+1)$, and its $R$-transform satisfies $\Rsf_{\mu_k}(r)=r^{(1-k)/(k+1)}$. Hence we obtain
\begin{align*}
\partial_t S(t,\lambda,\ee)=\frac{k+1}{2^{2/(k+1)}}(\partial_\ee S(t,\lambda,\ee))^{2/(k+1)}.
\end{align*}
\end{example}

\subsection{A PDE for the radial distribution function}
\label{subsec:radial_CDF_equation}

Proposition \ref{prop:radial-voiculescu} can also be applied to this free convolution semigroup. This gives a first-order partial differential equation for the radial distribution function.

\begin{proposition}
\label{prop:radial_CDF_equation}
Let $\mu=\widetilde{\mu}_{|Y_1|}$ and let $(0,\Lambda)$ be the free generating pair of $\mu$. For $t>0$ and $q>0$, set
\begin{align}
\theta_t(q)=1+\frac{t}{\iu q}\phi_\mu(\iu q)=1-t\int_{\RR}\frac{1+s^2}{q^2+s^2}\di{\Lambda}(s).
\label{eq:theta-semigroup}
\end{align}
Then there exists an open interval $I_t\subseteq\RR_{>0}$ such that $\theta_t\colon I_t\to(\mu^{\boxplus t}(\{0\}),1)$ is an increasing bijection. Moreover, for $q\in I_t$,
\begin{align}
\mu_{Y_t}(\{z\in\CC \colon |z|\le r_t(q)\})=\theta_t(q),\quad r_t(q)^2=q^2\theta_t(q)(1-\theta_t(q)).
\label{eq:radial-parametrization-semigroup}
\end{align}
Let $F(t,r)=\mu_{Y_t}(\{z\in\CC \colon |z|\le r\})$. Then, for $0<r<\mom_2(Y_t)^{1/2}$, $F$ satisfies
\begin{align}
t\partial_tF(t,r)+\frac{r(2F(t,r)-1)}{2F(t,r)}\partial_rF(t,r)=-1+F(t,r).
\label{eq:unscaled-radial-CDF-PDE}
\end{align}
Equivalently, if $\widehat F(t,r)=F(t,tr)$, then
\begin{align*}
t\partial_t\widehat F(t,r)=\frac{r\partial_r\widehat F(t,r)}{2\widehat F(t,r)}-1+\widehat F(t,r).
\end{align*}
\end{proposition}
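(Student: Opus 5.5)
First I will apply Proposition~\ref{prop:radial-voiculescu} to $Y_t$ for each fixed $t>0$. The free generating pair of $\mu^{\boxplus t}$ is $(0,t\Lambda)$, so $\phi_{\mu^{\boxplus t}}=t\phi_\mu$. The function $\theta$ of that proposition, computed for $Y_t$, is therefore exactly $\theta_t$ in \eqref{eq:theta-semigroup}. The interval $I_t$ and the parametrization \eqref{eq:radial-parametrization-semigroup} then follow verbatim, with $\widetilde{\mu}_{|Y_t|}(\{0\})=\mu^{\boxplus t}(\{0\})$.

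If $Y_t=0$ there is nothing to prove; a degenerate $\mu_{|Y_t|^2}$ would contradict infinite divisibility unless $Y_t$ is zero. By Proposition~\ref{prop:l-2_infty} and Theorem~\ref{thm:char_r_supp_spec}, the range $0<r<\mom_2(Y_t)^{1/2}$ is exactly the image of $I_t$ under $q\mapsto r_t(q)$. Hence every $(t,r)$ in the stated domain is of the form $(t,r_t(q))$ with $F(t,r_t(q))=\theta_t(q)$.

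For the PDE I will use the method of characteristics in reverse. Set $g(q)\coloneqq\int_{\RR}\frac{1+s^2}{q^2+s^2}\di{\Lambda}(s)$, so that $\theta_t(q)=1-tg(q)$, and regard $(t,q)$ as coordinates. Differentiating $F(t,r_t(q))=\theta_t(q)$ gives
\begin{align*}
\partial_tF+\partial_rF\,\partial_tr_t=\partial_t\theta_t=-g=\frac{\theta_t-1}{t},\qquad \partial_rF\,\partial_qr_t=\partial_q\theta_t=-tg'(q).
\end{align*}
From $r_t^2=q^2\theta_t(1-\theta_t)$, with $\theta_t=F$, I get
\begin{align*}
2r_t\partial_tr_t=q^2(1-2F)\partial_t\theta_t=q^2(1-2F)\frac{F-1}{t}.
\end{align*}
Since $q^2=r^2/(F(1-F))$, this simplifies to $t\,\partial_tr_t=\frac{r(2F-1)}{2F}$. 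Multiplying the first identity by $t$ and substituting yields \eqref{eq:unscaled-radial-CDF-PDE} directly, and the $q$-derivative identity is not even needed.

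To justify treating $F$ as a differentiable function of $(t,r)$, I note that the map $(t,q)\mapsto(t,r_t(q))$ is smooth and that $q\mapsto r_t(q)$ is strictly increasing: it is a strictly increasing bijection onto $(0,\mom_2^{1/2})$, being inverse to $r\mapsto F$ on that range. Here I would check $\partial_qr_t\neq0$ via $r_t^{-2}=S_{\mu_{|Y_t|^2}}(\theta_t-1)$ with $S$ strictly monotone and $\theta_t'>0$; if the derivative vanished at isolated points, I would argue by continuity. The inverse function theorem then makes $F$ smooth, and the chain rule computation above is valid.

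For the rescaled version, $\widehat F(t,r)=F(t,tr)$ gives $t\partial_t\widehat F=t\partial_tF+tr\,\partial_rF$ and $\partial_r\widehat F=t\partial_rF$. Substituting into \eqref{eq:unscaled-radial-CDF-PDE} and using $\rho=tr$ gives
\begin{align*}
t\partial_t\widehat F=-1+\widehat F+\rho\,\partial_rF\left(1-\frac{2\widehat F-1}{2\widehat F}\right)=-1+\widehat F+\frac{r\partial_r\widehat F}{2\widehat F}.
\end{align*}

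The main obstacle is the regularity and invertibility of the parametrization: $\partial_qr_t>0$ on $I_t$, which is needed to pass from identities in $(t,q)$ to a PDE in $(t,r)$. Everything else is algebra.
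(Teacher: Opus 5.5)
Your proposal is correct and follows essentially the same route as the paper: you apply Proposition~\ref{prop:radial-voiculescu} to $Y_t$ (using $\phi_{\mu^{\boxplus t}}=t\phi_\mu$), use $\mom_{-2}(Y_t)=\infty$ to see that $q\mapsto r_t(q)$ covers $(0,\mom_2(Y_t)^{1/2})$, and differentiate $F(t,r_t(q))=\theta_t(q)$ in $t$ using $t\partial_t\theta_t=\theta_t-1$ and $t\partial_t r_t=r_t(2\theta_t-1)/(2\theta_t)$.

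One small correction concerns your added regularity step, which the paper omits. Strict monotonicity of $S$ alone does not give $\partial_q r_t\neq0$, and arguing by continuity could not repair a point where this derivative vanished, since $\partial_rF$ would blow up there. What you actually need, and what holds, is $S'_{\mu_{|Y_t|^2}}<0$ everywhere on its domain. To see this, write $S(\psi(-s))=1/B(s)-s$ with $B(s)=\int\frac{x}{1+sx}\di{\mu_{|Y_t|^2}}(x)$. Cauchy--Schwarz then gives $\frac{d}{ds}S(\psi(-s))>0$ for non-Dirac measures, and $u=\psi(-s)$ is strictly decreasing in $s$.
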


The last equation is analogous to the one for the radial distribution function of the Brown measure in \cite{COR2024fractional}.

\begin{proof}
The first part follows from Proposition \ref{prop:radial-voiculescu} applied to $Y_t$. By Proposition \ref{prop:l-2_infty}, we have $\mom_{-2}(Y_t)=\infty$. Hence the inner radius of the Brown measure of $Y_t$ is zero. Together with Theorem \ref{thm:char_r_supp_spec} and Proposition \ref{prop:radial-voiculescu}, this shows that $q\mapsto r_t(q)$ parametrizes the interior radial support $0<r<\mom_2(Y_t)^{1/2}$. Thus, on this region, we may write $F(t,r_t(q))=\theta_t(q)$. For fixed $q$, \eqref{eq:theta-semigroup} gives
\begin{align*}
\partial_t\theta_t(q)=-\int_{\RR}\frac{1+s^2}{q^2+s^2}\di{\Lambda}(s)=-\frac{1-\theta_t(q)}{t}.
\end{align*}
On the other hand, differentiating $r_t(q)^2=q^2\theta_t(q)(1-\theta_t(q))$ with respect to $t$, we obtain
\begin{align*}
\partial_t r_t(q)=\frac{r_t(q)(2\theta_t(q)-1)}{2t\theta_t(q)}.
\end{align*}
The chain rule applied to $F(t,r_t(q))=\theta_t(q)$ yields
\begin{align*}
\partial_tF(t,r_t(q))+\frac{r_t(q)(2\theta_t(q)-1)}{2t\theta_t(q)}\partial_rF(t,r_t(q))=-\frac{1-\theta_t(q)}{t}.
\end{align*}
Substituting $\theta_t(q)=F(t,r_t(q))$, we have \eqref{eq:unscaled-radial-CDF-PDE}. The last equation follows immediately.
\end{proof}

\section*{Acknowledgments}
We would like to thank Takahiro Hasebe, Hao-Wei Huang, Noriyoshi Sakuma, and Jiun-Chau Wang for their helpful comments and suggestions.
P. Zhong is supported in part by NSF grant LEAPS-MPS-2516951 and NSF CAREER Award DMS-2516950.

\printbibliography

@misc{BZ2022,
 author = {Bercovici, Hari and Zhong, Ping},
 title = {The {Brown} measure of a sum of two free nonselfadjoint random variables, one of which is {R}-diagonal},
 year = {2022},
 howpublished = {Preprint, {arXiv}:2209.12379 [math.{PR}] (2022)},
 url = {https://arxiv.org/abs/2209.12379},
 arXiv = {arXiv:2209.12379}
}

@article{BNNS2018eta,
 author = {Bercovici, Hari and Nica, Alexandru and Noyes, Michael and Szpojankowski, Kamil},
 title = {Eta-diagonal distributions and infinite divisibility for {R}-diagonals},
 fjournal = {Annales de l'Institut Henri Poincar{\'e}. Probabilit{\'e}s et Statistiques},
 journal = {Ann. Inst. Henri Poincar{\'e}, Probab. Stat.},
 issn = {0246-0203},
 volume = {54},
 number = {2},
 pages = {907--937},
 year = {2018},
 doi = {10.1214/17-AIHP826},
 zbMATH = {6897973},
 Zbl = {1400.46052}
}

@article{AHS2013law,
 author = {Arizmendi, Octavio and Hasebe, Takahiro and Sakuma, Noriyoshi},
 title = {On the law of free subordinators},
 fjournal = {ALEA. Latin American Journal of Probability and Mathematical Statistics},
 journal = {ALEA, Lat. Am. J. Probab. Math. Stat.},
 issn = {1980-0436},
 volume = {10},
 number = {1},
 pages = {271--291},
 year = {2013},
 url = {alea.impa.br/articles/v10/10-12.pdf},
 zbMATH = {6236550},
 Zbl = {1291.46060}
}

@article{COR2024fractional,
 author = {Campbell, Andrew and O'Rourke, Sean and Renfrew, David},
 title = {The fractional free convolution of {{\(R\)}}-diagonal elements and random polynomials under repeated differentiation},
 fjournal = {IMRN. International Mathematics Research Notices},
 journal = {Int. Math. Res. Not.},
 issn = {1073-7928},
 volume = {2024},
 number = {13},
 pages = {10189--10218},
 year = {2024},
 doi = {10.1093/imrn/rnae062},
 zbMATH = {7935859},
 Zbl = {1559.60012}
}

@article{HW2022Reg,
 author = {Huang, Hao-Wei and Wang, Jiun-Chau},
 title = {Regularity results for free {L{\'e}vy} processes},
 fjournal = {Advances in Mathematics},
 journal = {Adv. Math.},
 issn = {0001-8708},
 volume = {402},
 pages = {42},
 note = {Id/No 108323},
 year = {2022},
 doi = {10.1016/j.aim.2022.108323},
 zbMATH = {7524880},
 Zbl = {1497.46074}
}

@article{HS2007Brown,
 author = {Haagerup, Uffe and Schultz, Hanne},
 title = {Brown measures of unbounded operators affiliated with a finite von {Neumann} algebra},
 fjournal = {Mathematica Scandinavica},
 journal = {Math. Scand.},
 issn = {0025-5521},
 volume = {100},
 number = {2},
 pages = {209--263},
 year = {2007},
 doi = {10.7146/math.scand.a-15023},
 zbMATH = {5361525},
 Zbl = {1168.46039}
}

@article{HS2009inv,
 author = {Haagerup, Uffe and Schultz, Hanne},
 title = {Invariant subspaces for operators in a general {{\(\text{II}_{1}\)}}-factor},
 fjournal = {Publications Math{\'e}matiques},
 journal = {Publ. Math., Inst. Hautes {\'E}tud. Sci.},
 issn = {0073-8301},
 volume = {109},
 pages = {19--111},
 year = {2009},
 doi = {10.1007/s10240-009-0018-7},
 zbMATH = {5570772},
 Zbl = {1178.46058}
}

@incollection{HM2013Law,
 author = {Haagerup, Uffe and M{\"o}ller, S{\"o}ren},
 title = {The law of large numbers for the free multiplicative convolution},
 booktitle = {Operator algebra and dynamics. Nordforsk network closing conference, Gj\'ogv, Faroe Islands, May 2012},
 isbn = {978-3-642-39458-4; 978-3-642-39459-1},
 pages = {157--186},
 year = {2013},
 publisher = {Berlin: Springer},
 doi = {10.1007/978-3-642-39459-1_8},
 zbMATH = {6420320},
 Zbl = {1319.46045}
}

@article{HL2000Brown,
 author = {Haagerup, Uffe and Larsen, Flemming},
 title = {Brown's spectral distribution measure for {{\(R\)}}-diagonal elements in finite von {Neumann} algebras},
 fjournal = {Journal of Functional Analysis},
 journal = {J. Funct. Anal.},
 issn = {0022-1236},
 volume = {176},
 number = {2},
 pages = {331--367},
 year = {2000},
 doi = {10.1006/jfan.2000.3610},
 zbMATH = {1529988},
 Zbl = {0984.46042}
}

@article{BYZ2024Brown,
 author = {Belinschi, Serban and Yin, Zhi and Zhong, Ping},
 title = {The {Brown} measure of a sum of two free random variables, one of which is triangular elliptic},
 fjournal = {Advances in Mathematics},
 journal = {Adv. Math.},
 issn = {0001-8708},
 volume = {441},
 pages = {62},
 note = {Id/No 109562},
 year = {2024},
 doi = {10.1016/j.aim.2024.109562},
 zbMATH = {7823132},
 Zbl = {1547.46060}
}

@article{BnT2002selfdecomp,
 author = {Barndorff-Nielsen, Ole E. and Thorbj{\o}rnsen, Steen},
 title = {Self-decomposability and {L{\'e}vy} processes in free probability},
 fjournal = {Bernoulli},
 journal = {Bernoulli},
 issn = {1350-7265},
 volume = {8},
 number = {3},
 pages = {323--366},
 year = {2002},
 zbMATH = {1824601},
 Zbl = {1024.60022}
}

@article{B1998process,
 author = {Biane, Philippe},
 title = {Processes with free increments},
 fjournal = {Mathematische Zeitschrift},
 journal = {Math. Z.},
 issn = {0025-5874},
 volume = {227},
 number = {1},
 pages = {143--174},
 year = {1998},
 doi = {10.1007/PL00004363},
 zbMATH = {1111622},
 Zbl = {0902.60060}
}

@article{V1993analogue,
 author = {Voiculescu, Dan},
 title = {The analogues of entropy and of {Fisher}'s information measure in free probability theory. {I}},
 fjournal = {Communications in Mathematical Physics},
 journal = {Commun. Math. Phys.},
 issn = {0010-3616},
 volume = {155},
 number = {1},
 pages = {71--92},
 year = {1993},
 doi = {10.1007/BF02100050},
 zbMATH = {459089},
 Zbl = {0781.60006}
}

@article{BB2007new,
 author = {Belinschi, S. T. and Bercovici, H.},
 title = {A new approach to subordination results in free probability},
 fjournal = {Journal d'Analyse Math{\'e}matique},
 journal = {J. Anal. Math.},
 issn = {0021-7670},
 volume = {101},
 pages = {357--365},
 year = {2007},
 doi = {10.1007/s11854-007-0013-1},
 zbMATH = {5243808},
 Zbl = {1142.46030}
}

@incollection{NS1997rdiag,
 author = {Nica, Alexandru and Speicher, Roland},
 title = {{{\(R\)}}-diagonal pairs. -- {A} common approach to {Haar} unitaries and circular elements},
 booktitle = {Free probability theory. Papers from a workshop on random matrices and operator algebra free products, Toronto, Canada, Mars 1995},
 isbn = {0-8218-0675-0},
 pages = {149--188},
 year = {1997},
 publisher = {Providence, RI: American Mathematical Society},
 zbMATH = {1003153},
 Zbl = {0889.46053}
}

@article {BB2004,
    AUTHOR = {Belinschi, S. T. and Bercovici, H.},
     TITLE = {Atoms and regularity for measures in a partially defined free
              convolution semigroup},
   JOURNAL = {Math. Z.},
  FJOURNAL = {Mathematische Zeitschrift},
    VOLUME = {248},
      YEAR = {2004},
    NUMBER = {4},
     PAGES = {665--674},
      ISSN = {0025-5874,1432-1823},
   MRCLASS = {46L54 (30B40 30E20 31A15)},
  MRNUMBER = {2103535},
MRREVIEWER = {Rolf\ Gohm},
       DOI = {10.1007/s00209-004-0671-y},
       URL = {https://doi-org.ezproxy.lib.uh.edu/10.1007/s00209-004-0671-y},
}

@article{BV1993free,
 author = {Bercovici, Hari and Voiculescu, Dan},
 title = {Free convolution of measures with unbounded support},
 fjournal = {Indiana University Mathematics Journal},
 journal = {Indiana Univ. Math. J.},
 issn = {0022-2518},
 volume = {42},
 number = {3},
 pages = {733--773},
 year = {1993},
 doi = {10.1512/iumj.1993.42.42033},
 zbMATH = {527331},
 Zbl = {0806.46070}
}

@article{BB2005partially,
 author = {Belinschi, S. T. and Bercovici, H.},
 title = {Partially defined semigroups relative to multiplicative free convolution},
 fjournal = {IMRN. International Mathematics Research Notices},
 journal = {Int. Math. Res. Not.},
 issn = {1073-7928},
 volume = {2005},
 number = {2},
 pages = {65--101},
 year = {2005},
 doi = {10.1155/IMRN.2005.65},
 zbMATH = {2162780},
 Zbl = {1092.46046}
}

@article{Z2026brown,
 author = {Zhong, Ping},
 title = {Brown measure of the sum of an elliptic operator and a free random variable in a finite von {Neumann} algebra},
 fjournal = {American Journal of Mathematics},
 journal = {Am. J. Math.},
 issn = {0002-9327},
 volume = {148},
 number = {1},
 pages = {161--233},
 year = {2026},
 doi = {10.1353/ajm.2026.a980771},
 zbMATH = {8159499}
}

@article{DHK2022free,
 author = {Driver, Bruce K. and Hall, Brian and Kemp, Todd},
 title = {The {Brown} measure of the free multiplicative {Brownian} motion},
 fjournal = {Probability Theory and Related Fields},
 journal = {Probab. Theory Relat. Fields},
 issn = {0178-8051},
 volume = {184},
 number = {1-2},
 pages = {209--273},
 year = {2022},
 doi = {10.1007/s00440-022-01142-z},
 zbMATH = {7606003},
 Zbl = {1500.60053}
}

@article{HZ2023Brown,
 author = {Ho, Ching-Wei and Zhong, Ping},
 title = {Brown measures of free circular and multiplicative {Brownian} motions with self-adjoint and unitary initial conditions},
 fjournal = {Journal of the European Mathematical Society (JEMS)},
 journal = {J. Eur. Math. Soc. (JEMS)},
 issn = {1435-9855},
 volume = {25},
 number = {6},
 pages = {2163--2227},
 year = {2023},
 doi = {10.4171/JEMS/1233},
 zbMATH = {7714610},
 Zbl = {1529.46047}
}

@article{HH2023brown,
 author = {Hall, Brian C. and Ho, Ching-Wei},
 title = {The {Brown} measure of a family of free multiplicative {Brownian} motions},
 fjournal = {Probability Theory and Related Fields},
 journal = {Probab. Theory Relat. Fields},
 issn = {0178-8051},
 volume = {186},
 number = {3-4},
 pages = {1081--1166},
 year = {2023},
 doi = {10.1007/s00440-022-01166-5},
 zbMATH = {7698591},
 Zbl = {1527.46039}
}

@article{BBgG2009regularization,
 author = {Belinschi, Serban T. and Benaych-Georges, Florent and Guionnet, Alice},
 title = {Regularization by free additive convolution, square and rectangular cases},
 fjournal = {Complex Analysis and Operator Theory},
 journal = {Complex Anal. Oper. Theory},
 issn = {1661-8254},
 volume = {3},
 number = {3},
 pages = {611--660},
 year = {2009},
 doi = {10.1007/s11785-008-0080-z},
 zbMATH = {5655765},
 Zbl = {1187.46055}
}

@article {KT2018limiting,
    AUTHOR = {K\"osters, Holger and Tikhomirov, Alexander},
     TITLE = {Limiting spectral distributions of sums of products of
              non-{H}ermitian random matrices},
      NOTE = {[On table of contents: Vol. 33 (2013)]},
   JOURNAL = {Probab. Math. Statist.},
  FJOURNAL = {Probability and Mathematical Statistics},
    VOLUME = {38},
      YEAR = {2018},
    NUMBER = {2},
     PAGES = {359--384},
      ISSN = {0208-4147,2300-8113},
   MRCLASS = {60B20 (46L54 60E07 60F05)},
  MRNUMBER = {3896715},
       DOI = {10.19195/0208-4147.38.2.6},
       URL = {https://doi-org.kyoto-u.idm.oclc.org/10.19195/0208-4147.38.2.6},
}

@book {NicaSpeicher2006book,
    AUTHOR = {Nica, Alexandru and Speicher, Roland},
     TITLE = {Lectures on the combinatorics of free probability},
    SERIES = {London Mathematical Society Lecture Note Series},
    VOLUME = {335},
 PUBLISHER = {Cambridge University Press, Cambridge},
      YEAR = {2006},
     PAGES = {xvi+417},
      ISBN = {978-0-521-85852-6; 0-521-85852-6},
   MRCLASS = {46L54 (46L53 60C05 81S25)},
  MRNUMBER = {2266879},
MRREVIEWER = {Todd\ Kemp},
       DOI = {10.1017/CBO9780511735127},
       URL = {https://doi-org.ezproxy.lib.uh.edu/10.1017/CBO9780511735127},
}

@misc{B1986lidskii,
 author = {Brown, L. G.},
 title = {Lidskii's theorem in the type {II} case},
 year = {1986},
 howpublished = {Geometric methods in operator algebras, {Proc}. {US}-{Jap}. {Semin}., {Kyoto}/{Jap}. 1983, {Pitman} {Res}. {Notes} {Math}. {Ser}. 123, 1-35 (1986).},
 zbMATH = {4054343},
 Zbl = {0646.46058}
}

@article{BL2001computation,
 author = {Biane, Philippe and Lehner, Franz},
 title = {Computation of some examples of {Brown}'s spectral measure in free probability},
 fjournal = {Colloquium Mathematicum},
 journal = {Colloq. Math.},
 issn = {0010-1354},
 volume = {90},
 number = {2},
 pages = {181--211},
 year = {2001},
 doi = {10.4064/cm90-2-3},
 zbMATH = {1709857},
 Zbl = {0988.22004}
}

@incollection{Z2023brown,
 author = {Zhong, Ping},
 title = {Brown measure of {{\(R\)}}-diagonal operators, revisited},
 booktitle = {Operators, semigroups, algebras and function theory. Volume from IWOTA, Lancaster, UK, virtual, August 16--20, 2021},
 isbn = {978-3-031-38019-8; 978-3-031-38022-8; 978-3-031-38020-4},
 pages = {225--254},
 year = {2023},
 publisher = {Cham: Birkh{\"a}user},
 doi = {10.1007/978-3-031-38020-4_10},
 zbMATH = {7927985}
}

@article{EJ2025density,
 author = {Erd{\H{o}}s, L{\'a}szl{\'o} and Ji, Hong Chang},
 title = {Density of {Brown} measure of free circular {Brownian} motion},
 fjournal = {Documenta Mathematica},
 journal = {Doc. Math.},
 issn = {1431-0635},
 volume = {30},
 number = {2},
 pages = {417--453},
 year = {2025},
 doi = {10.4171/DM/999},
 zbMATH = {8015942},
 Zbl = {1572.46050}
}

@article{BC2016outlier,
 author = {Bordenave, Charles and Capitaine, Mireille},
 title = {Outlier eigenvalues for deformed i.i.d. random matrices},
 fjournal = {Communications on Pure and Applied Mathematics},
 journal = {Commun. Pure Appl. Math.},
 issn = {0010-3640},
 volume = {69},
 number = {11},
 pages = {2131--2194},
 year = {2016},
 doi = {10.1002/cpa.21629},
 zbMATH = {6643251},
 Zbl = {1353.15032}
}

@article{BCC2014spectrum,
 author = {Bordenave, Charles and Caputo, Pietro and Chafa{\"i}, Djalil},
 title = {Spectrum of {Markov} generators on sparse random graphs},
 fjournal = {Communications on Pure and Applied Mathematics},
 journal = {Commun. Pure Appl. Math.},
 issn = {0010-3640},
 volume = {67},
 number = {4},
 pages = {621--669},
 year = {2014},
 doi = {10.1002/cpa.21496},
 zbMATH = {6291318},
 Zbl = {1301.60093}
}

@article{GKZ2011single,
 author = {Guionnet, Alice and Krishnapur, Manjunath and Zeitouni, Ofer},
 title = {The single ring theorem},
 fjournal = {Annals of Mathematics. Second Series},
 journal = {Ann. Math. (2)},
 issn = {0003-486X},
 volume = {174},
 number = {2},
 pages = {1189--1217},
 year = {2011},
 doi = {10.4007/annals.2011.174.2.10},
 zbMATH = {5960725},
 Zbl = {1239.15018}
}

@article{HZ2025deformed,
 author = {Ho, Ching-Wei and Zhong, Ping},
 title = {Deformed single ring theorems},
 fjournal = {Journal of Functional Analysis},
 journal = {J. Funct. Anal.},
 issn = {0022-1236},
 volume = {288},
 number = {5},
 pages = {42},
 note = {Id/No 110797},
 year = {2025},
 doi = {10.1016/j.jfa.2024.110797},
 zbMATH = {7965129},
 Zbl = {1556.60014}
}

@article{B1997free,
 author = {Biane, Philippe},
 title = {On the free convolution with a semi-circular distribution},
 fjournal = {Indiana University Mathematics Journal},
 journal = {Indiana Univ. Math. J.},
 issn = {0022-2518},
 volume = {46},
 number = {3},
 pages = {705--718},
 year = {1997},
 doi = {10.1512/iumj.1997.46.1467},
 url = {www.iumj.indiana.edu/TOC/973.htm},
 zbMATH = {1180062},
 Zbl = {0904.46045}
}

@article{BWZ2023superconvergence,
 author = {Bercovici, Hari and Wang, Jiun-Chau and Zhong, Ping},
 title = {Superconvergence and regularity of densities in free probability},
 fjournal = {Transactions of the American Mathematical Society},
 journal = {Trans. Am. Math. Soc.},
 issn = {0002-9947},
 volume = {376},
 number = {7},
 pages = {4901--4956},
 year = {2023},
 doi = {10.1090/tran/8891},
 zbMATH = {7709423},
 Zbl = {1528.46052}
}

@article{BBS2018eigenvalues,
 author = {Belinschi, Serban T. and {\'S}niady, Piotr and Speicher, Roland},
 title = {Eigenvalues of non-{Hermitian} random matrices and {Brown} measure of non-normal operators: {Hermitian} reduction and linearization method},
 fjournal = {Linear Algebra and its Applications},
 journal = {Linear Algebra Appl.},
 issn = {0024-3795},
 volume = {537},
 pages = {48--83},
 year = {2018},
 doi = {10.1016/j.laa.2017.09.024},
 zbMATH = {6802562},
 Zbl = {1376.15025}
}

@article{BN2008remarkable,
 author = {Belinschi, Serban T. and Nica, Alexandru},
 title = {On a remarkable semigroup of homomorphisms with respect to free multiplicative convolution},
 fjournal = {Indiana University Mathematics Journal},
 journal = {Indiana Univ. Math. J.},
 issn = {0022-2518},
 volume = {57},
 number = {4},
 pages = {1679--1713},
 year = {2008},
 doi = {10.1512/iumj.2008.57.3285},
 zbMATH = {5381919},
 Zbl = {1165.46033}
}

@article{BP1999stable,
 author = {Bercovici, Hari and Pata, Vittorino},
 title = {Stable laws and domains of attraction in free probability theory},
 fjournal = {Annals of Mathematics. Second Series},
 journal = {Ann. Math. (2)},
 issn = {0003-486X},
 volume = {149},
 number = {3},
 pages = {1023--1060},
 year = {1999},
 doi = {10.2307/121080},
 url = {https://eudml.org/doc/120522},
 zbMATH = {1347803},
 Zbl = {0945.46046}
}

@article{Bg2009rectangular,
 author = {Benaych-Georges, Florent},
 title = {Rectangular random matrices, related convolution},
 fjournal = {Probability Theory and Related Fields},
 journal = {Probab. Theory Relat. Fields},
 issn = {0178-8051},
 volume = {144},
 number = {3-4},
 pages = {471--515},
 year = {2009},
 doi = {10.1007/s00440-008-0152-z},
 zbMATH = {5586523},
 Zbl = {1171.15022}
}

@article{Bg2007infinitely,
 author = {Benaych-Georges, Florent},
 title = {Infinitely divisible distributions for rectangular free convolution: classification and matricial interpretation},
 fjournal = {Probability Theory and Related Fields},
 journal = {Probab. Theory Relat. Fields},
 issn = {0178-8051},
 volume = {139},
 number = {1-2},
 pages = {143--189},
 year = {2007},
 doi = {10.1007/s00440-006-0042-1},
 zbMATH = {5180132},
 Zbl = {1129.15019}
}

@article{Bg2010surprising,
 author = {Benaych-Georges, Florent},
 title = {On a surprising relation between the {Marchenko}-{Pastur} law, rectangular and square free convolutions},
 fjournal = {Annales de l'Institut Henri Poincar{\'e}. Probabilit{\'e}s et Statistiques},
 journal = {Ann. Inst. Henri Poincar{\'e}, Probab. Stat.},
 issn = {0246-0203},
 volume = {46},
 number = {3},
 pages = {644--652},
 year = {2010},
 doi = {10.1214/09-AIHP324},
 url = {https://eudml.org/doc/240980},
 zbMATH = {5795078},
 Zbl = {1206.46055}
}

@article{AH2016classical,
 author = {Arizmendi, Octavio and Hasebe, Takahiro},
 title = {Classical scale mixtures of {Boolean} stable laws},
 fjournal = {Transactions of the American Mathematical Society},
 journal = {Trans. Am. Math. Soc.},
 issn = {0002-9947},
 volume = {368},
 number = {7},
 pages = {4873--4905},
 year = {2016},
 doi = {10.1090/tran/6792},
 url = {hdl.handle.net/2115/66350},
 zbMATH = {6551158},
 Zbl = {1351.46063}
}

@incollection{H2021pde,
 author = {Hall, Brian C.},
 title = {{PDE} methods in random matrix theory},
 booktitle = {Harmonic analysis and applications},
 isbn = {978-3-030-61886-5; 978-3-030-61889-6; 978-3-030-61887-2},
 pages = {77--124},
 year = {2021},
 publisher = {Cham: Springer},
 doi = {10.1007/978-3-030-61887-2_5},
 zbMATH = {7444919},
 Zbl = {1493.60014}
}

@article{popakamil2025,
      title={On some properties of free commutators with semicircular variables}, 
      author={Mihai Popa and Kamil Szpojankowski},
      year={2025},
      eprint={2511.13578},
      archivePrefix={arXiv},
      primaryClass={math.OA},
      url={https://arxiv.org/abs/2511.13578}, 
}

@article{lehnerkamil2024integral,
      title={Free Integral Calculus I}, 
      author={Franz Lehner and Kamil Szpojankowski},
      year={2024},
      eprint={2311.04039},
      archivePrefix={arXiv},
      primaryClass={math.OA},
      url={https://arxiv.org/abs/2311.04039}, 
}

@article {Hasebe2016,
    AUTHOR = {Hasebe, Takahiro},
     TITLE = {Free infinite divisibility for powers of random variables},
   JOURNAL = {ALEA Lat. Am. J. Probab. Math. Stat.},
  FJOURNAL = {ALEA. Latin American Journal of Probability and Mathematical
              Statistics},
    VOLUME = {13},
      YEAR = {2016},
    NUMBER = {1},
     PAGES = {309--336},
      ISSN = {1980-0436},
   MRCLASS = {46L54 (60E07)},
  MRNUMBER = {3481440},
       DOI = {10.30757/alea.v13-13},
       URL = {https://doi-org.ezproxy.lib.uh.edu/10.30757/alea.v13-13},
}

@article {LehnerEjsmont2017,
    AUTHOR = {Ejsmont, Wiktor and Lehner, Franz},
     TITLE = {Sample variance in free probability},
   JOURNAL = {J. Funct. Anal.},
  FJOURNAL = {Journal of Functional Analysis},
    VOLUME = {273},
      YEAR = {2017},
    NUMBER = {7},
     PAGES = {2488--2520},
      ISSN = {0022-1236,1096-0783},
   MRCLASS = {46L54 (62E10)},
  MRNUMBER = {3677831},
MRREVIEWER = {B.\ V. Rajarama Bhat},
       DOI = {10.1016/j.jfa.2017.05.007},
       URL = {https://doi-org.ezproxy.lib.uh.edu/10.1016/j.jfa.2017.05.007},
}

@article {LehnerEjsmont2021,
    AUTHOR = {Ejsmont, Wiktor and Lehner, Franz},
     TITLE = {Sums of commutators in free probability},
   JOURNAL = {J. Funct. Anal.},
  FJOURNAL = {Journal of Functional Analysis},
    VOLUME = {280},
      YEAR = {2021},
    NUMBER = {2},
     PAGES = {Paper No. 108791, 29},
      ISSN = {0022-1236,1096-0783},
   MRCLASS = {46L54 (62E10)},
  MRNUMBER = {4159271},
MRREVIEWER = {Wojciech\ M\l otkowski},
       DOI = {10.1016/j.jfa.2020.108791},
       URL = {https://doi-org.ezproxy.lib.uh.edu/10.1016/j.jfa.2020.108791},
}

\end{document}